\crefname{enumi}{}{}
\crefname{enumi}{}{ }
\newcommand{\rk}{\mathrm{rk}}
\newcommand{\crk}{\mathrm{crk}}
\newcommand{\KK}{\mathbb{K}}
\newcommand{\LL}{\mathbb{L}}
\newcommand{\RR}{\mathbb{R}}
\newcommand{\ZZ}{\mathbb{Z}}
\newcommand{\NN}{\mathbb{N}}
\newcommand{\cB}{\mathcal{B}}
\newcommand{\C}{{\mathcal{C}}}
\newcommand{\cR}{\mathcal{R}}
\newcommand{\cL}{\mathcal{L}}
\newcommand{\cT}{\mathcal{T}}
\newcommand{\cE}{\mathcal{E}}
\newcommand{\cU}{\mathcal{U}}
\newcommand{\circuits}{\mathrm{Circ}}
\newcommand{\Supp}{\mathrm{Supp}}
\newcommand{\gap}{\text{\scriptsize{gap}}}
\newcommand{\rsubad}{subadditive and cosubadditive}
\theoremstyle{plain}
\newtheorem{Theorem}{Theorem}[section]
\newtheorem{Lemma}[Theorem]{Lemma}
\newtheorem{Proposition}[Theorem]{Proposition}
\newtheorem{Corollary}[Theorem]{Corollary}
\newtheorem{SettingNotation}[Theorem]{Setting and Notation}
\newtheorem{Theorem 7.5}[Theorem]{\Cref{thm:GeneratorsOfSymbolicReesAlgebraOfMatroid}}
\theoremstyle{definition}
\newtheorem{Definition}[Theorem]{Definition}
\newtheorem{Example}[Theorem]{Example}
\newtheorem{Question}[Theorem]{Question}
\newtheorem{Remark}[Theorem]{Remark}
\newcommand{\customgenericname}{} 
\newtheorem{innercustomgeneric}{\customgenericname}
\newcommand{\newcustomtheorem}[2]{%
  \newenvironment{#1}[1]{%
    \renewcommand{\customgenericname}{#2}%
    \renewcommand{\theinnercustomgeneric}{##1}%
    \begin{innercustomgeneric}%
  }{%
    \end{innercustomgeneric}%
  }%
}
\def\urltilda{\kern -.15em\lower .7ex\hbox{\~{}}\kern .04em}
\begin{document}

\title[Hamming weights and symbolic powers]{Generalized Hamming weights and symbolic powers of Stanley-Reisner ideals of matroids}
\author{Michael DiPasquale}

\address{Department of Mathematical Sciences \\
New Mexico State University\\
P.O. Box 30001 \\
Department 3MB \\
Las Cruces, NM 88003}
\email{midipasq@nmsu.edu}
\urladdr{\href{https://midipasq.github.io}{https://midipasq.github.io}}

\author{Louiza Fouli}
\address{Department of Mathematical Sciences \\
New Mexico State University\\
P.O. Box 30001 \\
Department 3MB \\
Las Cruces, NM 88003}
\email{lfouli@nmsu.edu}
\urladdr{\href{https://sites.google.com/view/louiza-fouli}{\tt https://sites.google.com/view/louiza-fouli}}

\author{Arvind Kumar}
\address{Department of Mathematical Sciences \\
New Mexico State University\\
P.O. Box 30001 \\
Department 3MB \\
Las Cruces, NM 88003}
\email{arvkumar@nmsu.edu}
\urladdr{\href{https://sites.google.com/view/arvkumar/home}{https://sites.google.com/view/arvkumar/home}}

\author{\c{S}tefan O. Toh\v{a}neanu}
\address{Department of Mathematics and Statistical Science\\ University of Idaho\\ 875 Perimeter Drive MS 1103\\ Moscow, ID 83844}
\email{tohaneanu@uidaho.edu}
\urladdr{\href{https://webpages.uidaho.edu/tohaneanu/}{https://webpages.uidaho.edu/tohaneanu/}}
\subjclass[2020] {94B05, 05B35, 05E40, 13F55, 51E10.
}

\keywords{Stanley-Reisner ideals, symbolic powers,  matroids, symbolic Rees algebra, generalized Hamming weights, Waldschmidt constant, subadditive sequence, paving and sparse paving matroids, perfect matroid designs, linear codes, resurgence, matroid configurations}

\begin{abstract}
It is well-known that the first generalized Hamming weight of a linear code, more commonly called \textit{the minimum distance} of the linear code, corresponds to the initial degree of the Stanley-Reisner ideal of the matroid of the dual code.  Our starting point in this paper is a generalization of this fact -- namely, the $r$-th generalized Hamming weight of a matroid is the smallest degree of a squarefree monomial in the $r$-th symbolic power of the Stanley-Reisner ideal of the matroid (in the appropriate range for $r$). 
We show that the squarefree monomials in successive symbolic powers of the Stanley-Reisner ideal of a matroid suffice to describe all symbolic powers of the Stanley-Reisner ideal.  Hence, we provide explicit expressions for initial degree statistics of symbolic powers of the Stanley-Reisner ideal of a matroid in terms of its generalized Hamming weights.  A key aspect of our approach is a careful study of duality.  If the generalized Hamming weights of a matroid and its dual are both subadditive, we prove a simple expression for the initial degree of every symbolic power of the Stanley-Reisner ideal of the matroid, which closely mirrors that of a uniform matroid.  This has unexpectedly far-reaching consequences - we prove the generalized Hamming weights of a matroid and its dual are both subadditive for many interesting classes of matroids and codes, including sparse paving matroids, perfect matroid designs, matroids arising from Steiner systems, first-order affine and projective Reed-Muller codes, constant weight codes, Griesmer codes, and perfect codes.  As an application, we study the resurgence and asymptotic resurgence of the matroid configurations introduced by Geramita-Harbourne-Migliore-Nagel.  In particular, we explicitly compute the asymptotic resurgence of a matroid configuration of points arising from a perfect matroid design.
\end{abstract}

\maketitle

\section{Introduction}

Symbolic powers of a (radical) ideal are a geometric analogue of regular powers - by the Zariski-Nagata theorem~\cite{Zar49, Nag59}, the $r$-th symbolic power of the defining ideal of a variety in projective space consists of all polynomials vanishing to order $r$ along the variety.  Symbolic powers of ideals are extensively studied in commutative algebra and algebraic geometry - see the excellent survey~\cite{DDGHNB-2018}.  In the context of Stanley-Reisner ideals, a seminal result due independently to Minh-Trung~\cite{MT-2011} and Varbaro~\cite{Varbaro-2011} is that a simplicial complex is the independence complex of a matroid if and only if all the symbolic powers of its Stanley-Reisner ideal are Cohen-Macaulay.

In this paper, we study the initial degrees of symbolic powers of the Stanley-Reisner ideal of the independence complex of a matroid via a finite sequence of integers known as its \textit{generalized Hamming weights}.  Generalized Hamming weights have their origins in coding theory, where they extend the notion of the minimum distance of a linear code and characterize its performance in certain cryptographic applications, as shown in an influential paper of Wei~\cite{We}.  There is a straightforward way to extend the definition of generalized Hamming weights to matroids (e.g.~\cite{JV13}).  From the perspective of combinatorial commutative algebra, it is natural to ask how the generalized Hamming weights are encoded in the Stanley-Reisner ideal of the independence complex of the matroid (henceforth we refer to this ideal simply as the Stanley-Reisner ideal of the matroid).  Johnsen and Verdure~\cite{JV13} give a striking answer: the generalized Hamming weights of a matroid are the initial degrees of syzygy modules on its Stanley-Reisner ideal.

Many other natural connections have been made between coding theory and commutative algebra.  For instance, Gr\"{o}bner bases are used to error-correct, decode, and compute the minimum distance of any linear code (see \cite{Aug96, DePe, BuPe}).  From the work in \cite{DePe}, the minimum distance of a code (and indeed, the generalized Hamming weights~\cite{AGT-2017}) can also be interpreted in terms of the heights of ideals generated by fold products of linear forms dual to the columns of a generator matrix (see also \cite{To1}).  From this description, the authors in \cite{GaTo} made the connection between the minimum distance and the initial degree of a graded module, namely, the Fitting module.

Our paper begins with another `initial degree' characterization of generalized Hamming weights, different from that of Johnsen and Verdure~\cite{JV13}.  Namely, we prove that the $r$-th generalized Hamming weight of a matroid can be identified with the smallest degree of a squarefree monomial in the $r$-th symbolic power of the Stanley-Reisner ideal of the matroid (\Cref{cor:equivalentInterpretationsGeneralizedHammingWeights}).  In \Cref{thm:GeneratorsOfSymbolicReesAlgebraOfMatroid},  we prove that the (finitely many) squarefree monomials in successive symbolic powers of the Stanley-Reisner ideal of the matroid actually suffice to describe all symbolic powers (this collection of squarefree monomials generates the \textit{symbolic Rees algebra}).  It follows that the initial degree of any symbolic power of the Stanley-Reisner ideal of a matroid can be computed by solving an integer linear program involving the generalized Hamming weights (\Cref{thm:SR}).  In particular, the asymptotic growth rate of the initial degree sequence of symbolic powers of the Stanley-Reisner ideal of a matroid -- known as its \textit{Waldschmidt constant} -- can be expressed as the minimum of simple ratios involving the generalized Hamming weights (\Cref{thm:WaldschmidtGenHamWeights} or \Cref{thm:SR})~\eqref{thm:SRc}).

The Waldschmidt constant was originally introduced in the context of complex analysis~\cite{Waldschmidt-1977} and has taken a prominent role in the study of symbolic powers since it was re-discovered by Bocci and Harbourne~\cite{BH10}.  There are many cases where this important invariant has been computed -- see, for example, 
\cite{GHV13, BMM14, DHN14, BCGHJNSVV2016, KM25}.
Moreover, obtaining a lower bound for the Waldschmidt constant is the content of celebrated open conjectures such as Chudnovsky's conjecture \cite{Chu, Dum15, FMX, DTG, BGHN22} and Demailly's conjecture \cite{Demailly, BGHND22}.  Despite many known cases, the Waldschmidt constant is notoriously difficult to compute.  In the context of codes, the fundamental article of Vardy~\cite{Va} shows that the minimum distance -- hence the generalized Hamming weights -- is NP-hard to compute in general.  Since we express the Waldschmidt constant as a minimum of certain expressions involving the generalized Hamming weights, Vardy's result suggests that computing the Waldschmidt constant (and even the initial degree) of the Stanley-Reisner ideal of a matroid can be NP-hard.

Our motivation for studying symbolic powers of the Stanley-Reisner ideal of a matroid stems in large part from a landmark paper of Geramita, Harbourne, Migliore, and Nagel~\cite{GHMN17}, where they prove that appropriate specializations of Stanley-Reisner ideals of matroids (defining ideals of varieties called \textit{matroid configurations}) have symbolic powers which retain many of the good properties of symbolic powers of Stanley-Reisner ideals of matroids.  Star configurations, which are matroid configurations arising from the uniform matroid, are particularly highly studied.  Many invariants, including the Waldschmidt constant, resurgence, and asymptotic resurgence, have been computed for star configurations (see~\cite{GHM13, Man20, TX-2021}).

In this paper, once we make the connection between generalized Hamming weights and symbolic powers, we show that the initial degrees of symbolic powers of Stanley-Reisner ideals of many matroids behave like those of a uniform matroid of the same rank on the same ground set.  The key to our analysis is \textit{subadditivity} of generalized Hamming weights.  Suppose $M$ is a matroid of rank $k$ on a ground set of size $n$, and let ${\mathrm{U}}_{n,k}$ be the uniform matroid of rank $k$ on the same ground set.  If the generalized Hamming weights of both $M$ and its dual are subadditive, then we prove in \Cref{thm:SR} that there is a particularly simple expression for the initial degrees of symbolic powers of the Stanley-Reisner ideal of $M$ which closely mirrors that of the uniform matroid ${\mathrm U}_{n,k}$.  Moreover, the Waldschmidt constant of the Stanley-Reisner ideal of $M$ is just $\frac{n}{n-k}$, which is the same as the Waldschmidt constant for the Stanley-Reisner ideal of ${\mathrm U}_{n,k}$ (again, \Cref{thm:SR}).  In \Cref{sec:pavingMatroids} and \Cref{sec: gen-CT}, we prove that the generalized Hamming weights of many matroids and their duals are subadditive.  This includes important families such as sparse paving matroids, perfect matroid designs, matroids arising from Steiner systems, and matroids of first-order affine and projective Reed-Muller codes, constant weight codes, Griesmer codes, and perfect codes.  Consequently, matroid configurations arising from any of these matroids share some of the good properties of star configurations (\Cref{cor:resurgenceboundssparsepaving}).  In particular, matroid configurations arising from \textit{perfect matroid designs} have symbolic powers which, at least asymptotically, behave very much like symbolic powers of the defining ideal of a star configuration.

The structure of the paper is as follows. \Cref{sec:prelims} is dedicated to background and preliminaries on matroids, codes, and symbolic powers.  In \Cref{sec:SymbolicPowersGenHammingWeights}, we work directly from the definition of generalized Hamming weights to prove two initial results that motivate a deeper study of the symbolic powers of the Stanley-Reisner ideal of a matroid.  Our first result is \Cref{thm:minDist}, where we give several equivalent conditions for a squarefree monomial to be in a symbolic power of the Stanley-Reisner ideal of a matroid.  From this, we deduce that the $r$-th generalized Hamming weight of a matroid equals the smallest degree of a squarefree monomial in the $r$-th symbolic power of the Stanley-Reisner ideal of the matroid (\Cref{cor:equivalentInterpretationsGeneralizedHammingWeights}).  We then prove that the Waldschmidt constant of the Stanley-Reisner ideal of the matroid is the minimum of ratios involving the generalized Hamming weights (\Cref{thm:WaldschmidtGenHamWeights}).  Our proof involves the fractional chromatic number of a matroid, which can be expressed in terms of the generalized Hamming weights.

In \Cref{sec:subadd-finite-seq}, we take a purely sequence-theoretic perspective.  Our main object of study is what we call the \textit{initial degree sequence} of a finite sequence.  We show that the initial degree sequence of a finite sequence is subadditive and express its asymptotic growth rate using the original sequence (\Cref{prop:subadd-dfunction}).  We then define and study a duality for finite increasing sequences that we call \textit{Wei duality}.  Wei duality is an abstraction of the relationship between the generalized Hamming weights of a matroid and those of its dual, discovered in the context of linear codes by Wei~\cite{We}.  If the Wei dual of a finite sequence is subadditive, we say the sequence is \textit{cosubadditive}.  We prove that if a finite sequence is subadditive and cosubadditive, then the initial degree sequence of the finite sequence has a particularly simple form (\Cref{cor:sun&cosub}).  In the last part of \Cref{sec:subadd-finite-seq}, we prove that if the sequence of maximal cardinalities of flats of the matroid is convex, then the sequence of generalized Hamming weights of the matroid forms a \rsubad{} sequence (\Cref{prop:flat-sub&cosub} and \Cref{rmk:dc}).  This establishes a compelling and somewhat mysterious connection between convexity and subadditivity/cosubadditivity.

We capitalize on our study of the initial degree sequence in \Cref{sec:noetherian-rees-algebra}.  Quite generally, we show that the initial degree sequence of the sequence of degrees of generators of a Noetherian Rees algebra of a filtration is the sequence of initial degrees of the ideals constituting the filtration (\Cref{thm:ReesAlgebraFiltration}).  We then prove that the squarefree monomials in the symbolic powers of the Stanley-Reisner ideal of a matroid generate its symbolic Rees algebra (\Cref{thm:GeneratorsOfSymbolicReesAlgebraOfMatroid}).  Applying the machinery of \Cref{sec:subadd-finite-seq}, we then deduce our main results on the initial degrees of symbolic powers of Stanley-Reisner ideals of matroids, which are collected in \Cref{thm:SR}.

In \Cref{sec:pavingMatroids} and \Cref{sec: gen-CT}, we prove that large classes of matroids and matroids of linear codes have generalized Hamming weights that form subadditive and cosubadditive sequences.   
In \Cref{sec:MatroidConfigurations}, we compute or bound invariants of matroid configurations such as the Waldschmidt constant, regularity, and resurgence (\Cref{cor: matroid config invariants}, \Cref{cor: matroid config resurgence} and \Cref{cor:matroid config points resurgence}).  These bounds have a simpler expression for sparse paving matroids.  Moreover, we compute explicitly the asymptotic resurgence of a matroid configuration of points coming from a perfect matroid design (\Cref{cor:resurgenceboundssparsepaving}). Our work in \Cref{sec:MatroidConfigurations}, particularly on sparse paving matroids and perfect matroid designs, was inspired by \cite{BFGM21} and recovers many of the results therein.  We conclude the article with final remarks and a list of questions, \Cref{sec: conclusion}.

\textbf{Acknowledgement:} 
We learned in a personal communication with Mantero and Nguyen that they had independently proved a description of the generators for the symbolic Rees algebra of the Stanley-Reisner ideal of a matroid and some formulas for the Waldschmidt constant before us.  We deem it appropriate to cite their preprint~\cite{ManNgu} for the proof of the structure of the symbolic Rees algebra, which appears in our \Cref{thm:GeneratorsOfSymbolicReesAlgebraOfMatroid}.
Our proof of \Cref{thm:GeneratorsOfSymbolicReesAlgebraOfMatroid} can be found in \Cref{app}.  
We recommend readers consult~\cite{ManNgu} for additional interesting results on the symbolic powers of Stanley-Reisner ideals of matroids.

DiPasquale was partially supported by the NSF grant DMS--2344588. Kumar was partially supported by an AMS-Simons Travel grant.

 \section{Preliminaries on matroids, codes, and ideals}\label{sec:prelims}

\subsection{Matroids}
This subsection covers some basic facts about matroids.  We refer the readers to standard texts by Welsh~\cite{Welsh76} and Oxley~\cite{Oxley2011}  for more details.

Let $E$ be a finite set.  A matroid $M=(E,\cB(M))$ is a pair consisting of a ground set $E$ and a collection $\cB=\cB(M)$ of subsets of $E$, called bases, which satisfy the following axioms:
\begin{enumerate} 
\item[(i)] $\cB\neq\emptyset$,
\item[(ii)] (\textit{symmetric exchange axiom}) If $B,B'\in\cB$ and $x\in B\setminus B'$, then there is some $y\in B'\setminus B$ so that $\left(B\setminus \{x\}\right)\cup\{y\}\in\cB$ and $\left(B'\setminus \{y\}\right)\cup\{x\}\in\cB$.
\end{enumerate}

A subset $J\subseteq  E$ is called \textit{independent} in $M$ if $J\subseteq  B$ for some basis $B\in\cB$. A subset $D\subseteq  E$ is called  \textit{dependent} in $M$ if it is not independent, i.e., if $D\not\subseteq  B$ for any $B\in\cB$. The \textit{minimal} dependent sets of $M$ are called \textit{circuits}; we denote the circuits of $M$ by $\circuits(M)$.

The collection of sets $\{E\setminus B~:~ B\in\cB(M)\}$ satisfies the symmetric exchange axiom and thus forms the bases of another matroid, called the \textit{dual matroid} of $M$, and denoted by $M^*$.

The \textit{rank} function of a matroid $M$ is a function $\rk_M:2^E\to \ZZ_{\ge 0}$ taking each subset $A\subseteq  E$ to a non-negative integer $\rk_M(A)$ defined by
\begin{equation} \label{eq:rankfun}
    \rk_M(A)=\max\left\{|A\cap B|~:~ B\in \cB(M)\right\}.
\end{equation}
The rank function of a matroid satisfies the three properties:
\begin{enumerate}
\item[(i)] $\rk_M(\emptyset)=0$
\item[(ii)] For any subsets $A,B\subseteq  E$, $\rk_M(A\cup B)+\rk_M(A\cap B)\le \rk_M(A)+\rk_M(B)$
\item[(iii)] For any $e\in E$, $A\subseteq  E$, $\rk_M(A)\le \rk_M(A\cup\{e\})\le \rk_M(A)+1$.
\end{enumerate}

Conversely, the existence of a function with these properties gives another way to define a matroid.  The rank of the entire ground set, $\rk_M(E)$, is called the \textit{rank of the matroid}, which we denote by $\rk(M)$.  Evidently, $\rk(M)$ is the common cardinality of every basis of $M$.

Given a subset $A\subseteq  E$, the \textit{closure} of $A$ in  $M$, written $\mbox{cl}_M(A)$, is defined as the maximal set $F$ under inclusion, which contains $A$ and has the same rank as $A$ (the existence of such a maximal set is a consequence of property (ii) of rank functions). A  subset $A\subseteq  E$  is called a \textit{flat} of the matroid if $\mbox{cl}_M(A)=A$. We denote the flats of $M$ by $\cL(M)$ and the flats of rank $k$ by $\cL_k(M)$.  The set $\cL(M)$ has the structure of a geometric lattice, and the meet and join operations for two flats $F_1, F_2\subseteq  \cL(M)$ are, respectively, $F_1\cap F_2$ and $\mbox{cl}(F_1\cup F_2)$.

Matroids can be characterized by independent set axioms, circuit axioms,  flat axioms, or rank function axioms (and in many other ways), see \cite{Welsh76, Oxley2011}. 

A \textit{cocircuit} of $M$ is a circuit of $M^*$. 
 We write $\circuits(M^*)$ for the set of cocircuits of $M$.  The cocircuits of $M$ are precisely the complements of flats of $M$ of rank $\rk(M)-1$.  Indeed, a flat $F$ of $M$ with $\rk_M(F)=\rk(M)-1$ is, by definition, a maximal subset of $E$ which does not contain any basis of $M$.  So removing any element of $E\setminus F$ results in a set whose complement contains a basis of $M$; thus, removing any element of $E\setminus F$ yields an independent set of $M^*$. Therefore, $E\setminus F$ is a circuit of $M^*$.  The converse is similar.

 An element $e\in E$ is called a \textit{loop} if $\{e\}$ is a circuit of $M$, and is called a \textit{coloop} if $\{e\}$ is circuit of $M^*$. Two distinct elements $e_1,e_2 \in E$ are called \textit{parallel} in $M$ if $\{e_1,e_2\}$ is a circuit in $M$. If $M$ has no loop and no parallel elements, then we say that $M$ is a \textit{simple matroid}.

\subsection{Matroids and coding theory}

In this subsection, we consolidate key terminologies from coding theory relevant to linear codes and the associated matroids that emerge from them. We direct readers to \cite{van-Lint-1999} for a standard text on coding theory and to \cite{We} for the topic of generalized Hamming weights.

Let $\mathbb K$ be any field, and let $n$ be a positive integer. A {\it linear code} $\C$ is a linear subspace of $\mathbb K^n$. We say that $n$ is the {\it length} of $\C$, and $k:=\dim_{\mathbb K}(\C)$ is the {\it dimension} of $\C$. We assume $2\le k<n$ to avoid trivialities unless otherwise stated.

For any vector ${\bf w}\in\mathbb K^n$, the {\em weight} of ${\bf w}$, denoted $wt({\bf w})$, is the number of nonzero entries in ${\bf w}$. The {\em minimum distance} of $\C$ is the integer: $$d=d(\C):=\min\{wt({\bf c})~:~{\bf c}\in\C\setminus\{{\bf 0}\}\}.$$ The numbers $n$, $k$, and $d$ are called the {\em parameters} of $\C$, and $\C$ is called an {\em $[n,k,d]$-linear code} (or simply an $[n,k]$-linear code if we do not wish to specify the minimum distance).

A {\em generator matrix} of $\C$ is any $k\times n$ matrix whose rows form a basis for $\C$. If $G$ is such a matrix, then any element ${\bf c}$ of $\C$ (called {\em codeword}) is a linear combination of the rows of $G$, so ${\bf c}={\bf v}^TG$, for some ${\bf v}\in\mathbb K^k$. Using simple linear algebra, we obtain that the transpose of any such ${\bf c}$ is in the kernel of an $(n-k)\times n$ matrix, called a {\em parity-check matrix} of $\C$. 

A generator matrix $G$ for a code $\C$ is in \textit{standard form} if $G=\left[I_k|P\right]$ where $I_k$ is the $k\times k$ identity matrix and $P$ is a $k\times (n-k)$ matrix; in this case $H:=\left[-P^T|I_{n-k}\right]$ is a parity-check matrix of $\C$.  By the row reduction algorithm, and possibly after a permutation of the columns of $G$ (which produces a generator matrix of a code equivalent to $\C$;\footnote{By definition, two linear codes $\mathcal C_1$ and $\mathcal C_2$ are {\em (monomially) equivalent} if a generator matrix of one can be obtained from the other by right multiplication with a monomial matrix (i.e., the product of a diagonal matrix and a permutation matrix).} none-the-less, this ``new'' linear code has the same parameters as $\C$), any matrix $G$ can be brought to standard form.

For any  $[n,k,d]$- linear code $\C\subset\KK^n$  with generator matrix $G$, we define a matroid $M(\C)$ on the ground set $[n]$ whose independent sets are those subsets $J\subseteq  [n]$ so that the columns of $G$ indexed by $J$ are linearly independent. Since $\dim_{\KK} \mathcal{C}=k$, the rank of $M(\C)$ is $k$.
Note that we can define $M(\C)$ from any choice of a generating matrix of $\C$. Clearly, equivalent codes give rise to isomorphic matroids. 

Let $\C^{\perp}$ denote the orthogonal dual of $\C$ in $\mathbb K^n$, i.e.,
\[
\C^{\perp}=\{{\bf v}\in\mathbb K^n~:~ {\bf v}\cdot \bf c=0, \mbox{ for all } {\bf c}\in\C\},
\]
where $\cdot$ denotes the usual dot product in $\mathbb K^n$. Then, $\C^{\perp}$ is an $[n,n-k,d(\C^{\perp})]$-linear code, called the {\em dual code} of $\C$, and it has a generator matrix $H$, where $H$ is a parity-check matrix of $\C$.  

One can readily verify that the matroid corresponding to the dual of a code is, in fact, the dual of the matroid associated with the original code, i.e., $M(\C^\perp)=M(\C)^*$. We refer to $M(\C)$ as \textit{the matroid of $\C$} or \textit{the parity check matroid of $\C^\perp$}, and to $M(\C^\perp)$ as \textit{the matroid of $\C^\perp$} or \textit{the parity check matroid of $\C$}.

The notion of generalized Hamming weights first emerged within the realm of coding theory~\cite{HeKlMy}, specifically concerning the support structures of subcodes, see also \cite{HeKlMy, We}. Let $\mathcal C$ be an $[n,k,d]$-linear code.  We set  $[n]:=\{1,\ldots,n\}$. The {\em support of a codeword} ${\bf c}\in \mathcal{C}$ is ${\rm supp}({\bf c}):=\{i \in [n]~:~c_i\neq 0\}$.
 A {\em subcode} of $\mathcal{C}$ is a linear subspace $\mathcal{D}$ of $\mathcal{C}$ and its {\em support} is defined as 
 \[
 \Supp(\mathcal{D}):=\bigcup \limits_{{\bf c}\in \mathcal{D}} {\rm{supp}}({\bf c})=\{i\in [n]~:~ \exists {\bf c} \in \mathcal{D} \mbox{ with } c_i\neq 0\}.
 \]

For any $r\in [k]$, the {\em $r$-th generalized Hamming weight of $\mathcal C$} is the positive number 
\[
d_r(\mathcal C):=\min\{|\Supp(\mathcal D)|: \mathcal D\subseteq \mathcal C,\,\dim_{\KK}\mathcal D=r\}.
\]

Note that $d_1(\mathcal C)$ equals the minimum distance $d$ of $\mathcal C$; this is because all subcodes of $\C$ of dimension 1 are the linear spans (i.e., scalar multiples) of the nonzero codewords of $\C$. On the other hand, $d_k(\C)=n-t$, where $t$ is the number of zero columns of a generator matrix  $G$  of $\C$, since the only subspace of $\C$ of dimension $k$ is $\C$ itself.

Wei established the following rank formula for the generalized Hamming weights of linear codes: 

\begin{Theorem}$($\cite[Theorem 2]{We}$)$\label{thm:GenHammingNullityFormula}  Let $\mathcal C$ be an $[n,k,d]$-linear code with a parity check matrix $H$. For any  $j\in [n]$, let $H_j$ denote the $j$-th column of $H$.   Then for any $r\in [k]$,
\begin{align*}
d_r(\mathcal C)=\min_{J\subseteq[n]}\{|J| ~:~  |J|-\dim_{\mathbb K}{\rm Span}_{\mathbb K}\{H_j\,:\, j\in J\}\ge r\}.
\end{align*}
\end{Theorem}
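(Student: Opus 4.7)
The plan is to reformulate the minimization over subcodes as a minimization over subsets of $[n]$ by exploiting the parity-check matrix. For any subset $J\subseteq [n]$, define the ``shortening'' subcode
\[
\mathcal{C}(J):=\{\mathbf{c}\in\mathcal{C}~:~\mathrm{supp}(\mathbf{c})\subseteq J\}.
\]
Since $\mathbf{c}\in\mathcal{C}$ if and only if $H\mathbf{c}^T=\mathbf{0}$, the codewords of $\mathcal{C}(J)$ correspond bijectively to linear dependencies among the columns $\{H_j:j\in J\}$. In particular,
\[
\dim_{\mathbb{K}}\mathcal{C}(J)=|J|-\dim_{\mathbb{K}}\mathrm{Span}_{\mathbb{K}}\{H_j:j\in J\}.
\]
This single identity is the bridge between the right-hand side of the theorem (a statement about column ranks of $H$) and the left-hand side (a statement about subcode supports).

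Next I would prove the two inequalities.

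For the inequality $d_r(\mathcal{C})\geq \min_J\{|J|:|J|-\dim\mathrm{Span}\{H_j:j\in J\}\geq r\}$: pick an $r$-dimensional subcode $\mathcal{D}\subseteq\mathcal{C}$ achieving $m(\mathcal{D})=d_r(\mathcal{C})$, and set $J:=\Supp(\mathcal{D})$. Then $\mathcal{D}\subseteq\mathcal{C}(J)$, so $\dim\mathcal{C}(J)\geq r$, which by the identity above means $|J|-\dim\mathrm{Span}\{H_j:j\in J\}\geq r$. Since $|J|=m(\mathcal{D})=d_r(\mathcal{C})$, the minimum on the right is at most $d_r(\mathcal{C})$.

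For the reverse inequality, take $J$ achieving the minimum on the right, so $\dim\mathcal{C}(J)\geq r$. Choose any $r$-dimensional subspace $\mathcal{D}\subseteq\mathcal{C}(J)$; by construction $\Supp(\mathcal{D})\subseteq J$, hence $d_r(\mathcal{C})\leq m(\mathcal{D})\leq |J|$. Combining both inequalities finishes the proof.

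There is no serious obstacle here; the whole argument rests on the elementary identity $\dim\mathcal{C}(J)=|J|-\mathrm{rank}(H_J)$ and a routine ``choose the optimal $\mathcal{D}$, read off its support'' / ``choose the optimal $J$, pick a subcode inside $\mathcal{C}(J)$'' duality. The only point requiring care is making sure that, given $\dim\mathcal{C}(J)\geq r$, one genuinely extracts an $r$-dimensional subcode whose support lies in $J$ (so that $m(\mathcal{D})\leq |J|$ rather than just $m(\mathcal{D})=|J|$); this is why the inequality $m(\mathcal{D})\leq|J|$ is what actually drives the second direction.
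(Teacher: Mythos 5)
Your proof is correct and complete. A remark worth noting: the paper does not actually prove this statement---it is cited directly as Theorem~2 of Wei's foundational paper \cite{We}, so there is no in-paper argument to compare against. That said, your argument (defining the shortened subcode $\mathcal{C}(J)=\{\mathbf{c}\in\mathcal{C}:\mathrm{supp}(\mathbf{c})\subseteq J\}$, establishing $\dim_{\mathbb{K}}\mathcal{C}(J)=|J|-\dim_{\mathbb{K}}\mathrm{Span}_{\mathbb{K}}\{H_j:j\in J\}$ by rank--nullity applied to the map $(c_j)_{j\in J}\mapsto\sum_{j\in J}c_jH_j$, and then running the two-sided optimization) is precisely the classical proof that appears in Wei's original paper. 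The one point you flag at the end---that the second direction needs $m(\mathcal{D})\le|J|$ rather than equality, since the chosen $r$-dimensional $\mathcal{D}\subseteq\mathcal{C}(J)$ may have support strictly smaller than $J$---is indeed the small subtlety of the argument, and you handle it correctly.
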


The notion of generalized Hamming weights can be generalized for arbitrary matroids; we give a formulation that appears in \cite{JV13}.

\begin{Definition}$($\cite[Definition 2.1]{JV13}$)$\label{def:HammingWeightsMatroids}
Let $M$ be a matroid of rank $k$ on the ground set $E$ of size $n$. Then  for any integer $r\in[n-k]$ {\em the $r$-th generalized Hamming weight of $M$},  denoted $d_r(M)$, is defined as 
\[
d_r(M)=\min\{|U|~:~ U\subseteq E \text{ and } |U|-\rk_{M}(U)= r\}.
\] 
\end{Definition}

We will see additional matroidal interpretations of generalized Hamming weights in \Cref{sec:SymbolicPowersGenHammingWeights} and \Cref{ss:IDS-GHWs}.  In the following remark, we connect the generalized Hamming weight of a code to that of the generalized Hamming weight of the matroid associated with its dual code.

\begin{Remark}\label{rem:HammingWeightsOfMatroidsVsLinearCodes} Let $\C$ be a $[n,k]$-linear code and, let $\C^{\perp}$ be its dual code. For $J\subseteq  [n]$, using the notations from \Cref{thm:GenHammingNullityFormula},   $\dim_{\KK}({\rm Span}_{\KK}\{H_j~:~ j\in J\})$ can be interpreted in the language of matroids as $\rk_{M(\C^\perp)}(J)$. By applying \Cref{thm:GenHammingNullityFormula} in conjunction with \Cref{def:HammingWeightsMatroids},  we arrive  at the following formulation: \begin{equation*}\label{eq:GenHammingMatroidFormulation}
    d_r(\mathcal C)=\min_{J\subseteq[n]}\{|J| ~:~  |J|- \rk_{M(\C^\perp)}(J) \ge r\}=d_r\left(M(\C^{\perp})\right)=d_r(M(\C)^*).
\end{equation*}
Thus the \(r\)-th generalized Hamming weight of the code \(\C\) is equal to the \(r\)-th generalized Hamming weight associated with the \textit{parity check matroid} of \(\C\), rather than the matroid of $\C$. \end{Remark}

We conclude this subsection by highlighting the key properties of generalized Hamming weights.  Most of these properties are due to Wei for linear codes \cite{We}.  The extension of these to matroids can be found in~\cite{Wei-type-Matroid} and~\cite{JV13}.

\begin{Lemma}\label{lem:genMindistMatroid}
Let $M$ be a matroid of rank $k$ on a ground set $E$ of size $n$.  Then
\begin{enumerate}
\item \label{lem:genMindistMatroida} $1\le d_1(M)<d_2(M)<\cdots < d_{n-k}(M) \le n$,
\item \label{lem:genMindistMatroidb} $d_r(M)\le k+r$,
\item \label{lem:genMindistMatroidc} If $d_{r_0}(M)=k+r_0$ for some $r_0\in[n-k]$, then $d_r(M)=k+r$ for $r_0\le r\le n-k$.
\item \label{lem:genMindistMatroidd} $d_{n-k}(M)=n-\ell$, where $\ell$ is the number of coloops in $M$ (or loops in $M^*$).  In particular, if $M$ has no coloops, then $d_{n-k}(M)=n$.
\item \label{lem:genMindistMatroide} $\{d_r(M)~:~ r\in[n-k]\}=[n]\setminus\{n+1-d_s(M^*)~:~ s\in [k]\}$.
\end{enumerate}
\end{Lemma}

\subsection{Simplicial complexes and ideals associated to a matroid}
Given a matroid $M=(E,\cB(M))$, we build a simplicial complex $\Delta(M)$ -- called {\em the independence complex of $M$} -- whose simplices are the independent sets of $M$. 
Also, we associate $S=\LL[x_i~:~i\in E]$ a polynomial ring over a field $\LL$ with variables indexed by $E$. For  $U\subseteq  E$, we put $x^U:=\prod_{u\in U} x_u$ and $P_U:=\langle x_u~:~ u\in U\rangle$.

The non-faces of $\Delta(M)$ are the dependent sets of $M$; therefore, the minimal non-faces of $\Delta(M)$ correspond to circuits of $M$.  The Stanley-Reisner ideal of $\Delta(M)$ is therefore
\[
I_{\Delta(M)}=\langle x^C~:~ C\in\circuits(M)\rangle=\bigcap_{B\in \cB(M^*)} P_{B}.
\]
To verify the above is a primary decomposition, it suffices to observe that the bases of $M^*$ are precisely the minimal subsets of $E$, which meet every circuit of $M$. 
 Similarly,
\[
I_{\Delta(M^*)}=\langle x^C~:~ C\in\circuits(M^*)\rangle=\bigcap_{B\in \cB(M)} P_{B}.
\]

\subsection{Symbolic powers and the Waldschmidt constant}
Let $I$ be a homogeneous ideal in $S$. Recall that the $s$-th \textit{symbolic power} of an ideal $I$ is 
\[
I^{(s)}:=\bigcap_{P\in\mbox{Ass}(S/I)} (P^sR_P)\cap R,
\]
which for a squarefree monomial ideal simplifies (see~\cite{CEHH17}) to 
\[
I^{(s)}=\bigcap_{P\in\mbox{Min}(S/I)}P^s.
\]
In particular, if $M=(E,\cB(M))$ is a matroid, then 
\[I^{(s)}_{\Delta(M)}=\bigcap_{B\in \cB(M^*)} P_B^s \text{ and } I^{(s)}_{\Delta(M^*)}=\bigcap_{B\in \cB(M)} P_B^s.\]
For a vector ${\bf a}=(a_1,\ldots,a_{|E|}) \in \NN^{|E|}$, we set $x^{{\bf a}}:=\prod_{e \in E} x_e^{a_e}$. It follows that  \begin{align}\label{eq:PDsymbolicpowersSR}
    x^{\textbf{a}} \in I^{(s)}_{\Delta(M)} & \text{ if and only if } \sum\limits_{e\in B} a_e\ge s \text{ for all } B\in \cB(M^*) \\ & \text{ if and only if }
    \sum\limits_{e\notin B} a_e\ge s \text{ for all } B\in\cB(M). \nonumber 
\end{align}

We conclude this section by recalling the definitions of initial degrees and the Waldschmidt constant of an ideal.

\begin{Definition}
Let $S$ be a positively graded ring and let $M=\oplus_{i\ge 0}M_i$  be a finitely generated graded module over $S$.  The {\em initial degree} of $M$, denoted $\alpha(M)$, is the smallest $i$ for which $M_i\neq 0$; in other words, it is the smallest degree of a generator of $M$. 

Let $I$ be a homogeneous ideal in $S$.  The {\it Waldschmidt constant of } $I$, 
denoted by $\widehat{\alpha}(I)$, is defined to be $\widehat{\alpha}(I)=\lim \limits_{s\rightarrow \infty} \frac{\alpha (I^{(s)})}{s}=\inf \limits_{s\ge 1} \frac{\alpha (I^{(s)})}{s}$. This limit exists because of the subadditivity of $\alpha$ on symbolic powers -- see~\cite{BH10}.
\end{Definition}

\begin{Remark}\label{rmk:loopless}
Our interests in this article are centered around the question of membership in a symbolic power and determining $\alpha(I_{\Delta(M)})$ and $\widehat{\alpha}(I_{\Delta(M)})$.  We will always assume $0<\rk(M)<n$ to avoid trivialities.

Notice further that if $M$ is a matroid with a loop $e\in E$, then $x_e\in I_{\Delta(M)}$ and hence $x_e^s \in I_{\Delta(M)}^{(s)}$. In this case, $\alpha(I_{\Delta(M)}^{(s)}) =s$ for all $s \ge 1$ and $\widehat{\alpha}(I_{\Delta(M)})=1$.  Similarly, if $M^*$ has a loop $e\in E$, then $e$ is not contained in any circuit of $M$, and thus $I_{\Delta(M)}$ does not have any generator divisible by $x_e$.  It follows that nothing is lost (for purposes of analyzing membership in symbolic powers) by considering instead the Stanley-Reisner ideal $I_{\Delta(M/\{e\})}$, where $M/\{e\}$ is the \textit{contraction} of $M$ along $\{e\}$.  Precisely, $M/\{e\}$ is the matroid on the ground set $E\setminus\{e\}$ whose bases are obtained from the bases of $M$ that contain $e$ by removing $e$.  If $e$ is a loop of $M^*$, then the circuits of $M$ coincide with the circuits of $M/\{e\}$. 
 With these two observations, we reduce to the case that both $M$ and $M^*$ do not have loops.
\end{Remark}

\section{Generalized Hamming weights of matroids and symbolic powers}\label{sec:SymbolicPowersGenHammingWeights}
 
In this section, we introduce the main themes of the paper by highlighting two results (\Cref{thm:minDist}, \Cref{thm:WaldschmidtGenHamWeights}) that bridge generalized Hamming weights with symbolic powers of Stanley-Reisner ideals. These results serve as a launching point for the deeper structural connections developed in the following sections.

Let $M$ be a matroid of rank $k$ on a ground set $E=[n]$. Let $ S:=\LL[x_1,\ldots,x_n]$ be a polynomial ring over a field $\LL$ and $I_{\Delta(M)}$ be the Stanley-Reisner ideal of $M$.  Our first result gives a number of equivalent interpretations for membership of a squarefree monomial in a symbolic power of $I_{\Delta(M)}$.  Recall that if $U\subset E$, the quantity $|U|-\rk_{M}(U)$ is called the \textit{nullity} of $U$ (in $M$) and is denoted $n_M(U)$.

\begin{Theorem}\label{thm:minDist} Let $M$ be a matroid of rank $k$ on the ground set $E=[n]$, $r\in [n-k]$, and $J \subseteq  E$.  The following are equivalent: 
\begin{enumerate}
\item $x^J\in I^{(r)}_{\Delta(M)}.$
\item $n_M(J)=|J|-\rk_M(J)\ge r.$
\item $\rk_{M^*}(E\setminus J)\le n-k-r.$
\end{enumerate}
\end{Theorem}

\begin{proof}
Let $J\subseteq E$ and let $r\in[n-k]$. 
First, we prove the equivalence of $(\rm b)$ and $(\rm c)$.  We use the following relationship between rank functions of $M$ and $M^*$ (see \cite[Proposition 2.1.9]{Oxley2011}):
\begin{align*} \label{rank-rel}
|J| -\rk_{M}(J) = \rk(M^*) -\rk_{M^*}(E \setminus J)= n-k - \rk_{M^*}(E \setminus J).
\end{align*} 
This gives us that $|J|-\rk_{M}(J) \ge r $ if and only if $n-k - \rk_{M^*}(E \setminus J) \ge r$ if and only if $\rk_{M^*}(E \setminus J) \le n-k - r$, as desired.

Next, we prove $(\rm c)$ and $(\rm a)$ are equivalent.  Observe that $\rk_{M^*}(E\setminus J)=\max\{|B\cap (E\setminus J)|: B\in\cB(M^*)\}$.  So $(\rm c)$ is equivalent to
\[
|B\cap (E\setminus J)|\le n-k-r \mbox{ for all } B\in\cB(M^*),
\]
which is equivalent to
\[
|B\cap J|+|B\cap (E\setminus J)|\le |B\cap J|+ n-k-r \mbox{ for all } B\in\cB(M^*).
\]
The left hand side of this inequality is simply $|B|=n-k$ (since $B\in\cB(M^*)$ and $\rk(M^*)=n-k$).  Re-arranging, we get the equivalent condition 
\[
r\le |B\cap J| \mbox{ for all } B\in\cB(M^*).
\]
By \Cref{eq:PDsymbolicpowersSR}, 
this last condition on $J$ is equivalent to $x^J\in I^{(r)}_{\Delta(M)}$, as desired.
\end{proof}

As a corollary, we show the connection between symbolic powers of the Stanley-Reisner ideal of a matroid and its generalized Hamming weights.

\begin{Corollary}\label{cor:equivalentInterpretationsGeneralizedHammingWeights}
Let $M$ be a matroid of rank $k$ on a ground set $E$ of size $n$, and $r\in [n-k]$.  Then
\begin{enumerate}
\item $d_r(M)=\min\{|J|: J\subseteq E \mbox{ and }x^J\in I^{(r)}_{\Delta(M)}\}$ (i.e., the smallest degree of a squarefree monomial in $I^{(r)}_{\Delta(M)})$.
\item $d_r(M)=\min\limits\{|U|:U\subseteq E \mbox{ and } n_M(U)\ge r\}.$
\item $d_r(M)=n-\max\{|F|:F\in \cL_{n-k-r}(M^*)\}.$
\end{enumerate}
\end{Corollary}
\begin{proof}
To show that the expression in $({\rm b})$ is equal to $d_r(M)$, observe that for any $e\in U$, $n_M(U)-1\le n_M(U\setminus\{e\})\le n_M(U)$.  Thus $\min\{|U|:U\subseteq E \mbox{ and } n_M(U)\ge r\}=\min\{|U|:U\subseteq E \mbox{ and } n_M(U)= r\}=d_r(M)$ by \Cref{def:HammingWeightsMatroids}.  The expression in $(\rm a)$ is then equal to $d_r(M)$ by \Cref{thm:minDist}.  To show the expression in $(\rm c)$ is equal to $d_r(M)$, observe that
\begin{align*}
d_r(M)& =  \min\{|J|~:~J\subset E,\rk_{M^*}(E\setminus J)\le n-k-r\}\\
& =  n-\max\{|F|~:~ F\subset E, \rk_{M^*}(F)\le n-k-r\}\\
& =  n-\max\{|F|~:~  F\in \cL_{n-k-r}(M^*)\},
\end{align*}
where the first equality follows from \Cref{thm:minDist}.
\end{proof}

As an immediate consequence of \Cref{cor:equivalentInterpretationsGeneralizedHammingWeights}, we obtain a lower bound for the generalized Hamming weights of matroids.
\begin{Corollary}\label{gHW-lb}
    Let $M$ be a matroid of rank $k$ on the ground set $E=[n]$, and let $ r \in [n-k]$. Then,  $$d_r(M) \ge \alpha(I_{\Delta(M)}^{(r)}).$$
\end{Corollary}

The following result demonstrates that the generalized Hamming weights of a matroid suffice to ascertain the Waldschmidt constant of its Stanley-Reisner ideal.  Observe that the following result and its proof use only the definition of generalized Hamming weights (\Cref{def:HammingWeightsMatroids}) and thus are independent of~\Cref{thm:minDist}.

\begin{Theorem}\label{thm:WaldschmidtGenHamWeights}
Let $M$ be a matroid on the ground set $E=[n]$ of rank $k<n$. Then
\[
\widehat{\alpha}(I_{\Delta(M)})=\min_{r\in [n-k]}\left\lbrace \frac{d_r(M)}{r}\right\rbrace.
\]
\end{Theorem}

\begin{proof}
We know that  $d_1(M)=1$ if and only if  $M$ has a circuit of size one. 
Equivalently, $d_1(M)=1$ if and only if $M$ has a loop. But in that case, $\widehat{\alpha}(I_{\Delta(M)})=1$, see \Cref{rmk:loopless}. Moreover, $1\le \min \limits_{r\in [n-k]}\left\{\frac{d_r(M)}{r}\right\}\le \frac{d_1(M)}{1}=1$ and the result follows in this case.

Next, we assume that $d_1(M)\ge 2$, i.e., $M$ has no loop. It follows from a result of Edmonds~\cite{Edmonds65} that the \textit{fractional chromatic number} of a loopless matroid $M$ is
\begin{equation}\label{eq:frchromaticmatroid}
\chi_f(M)=\max\limits_{\emptyset\neq J \subseteq  E}\frac{|J|}{\rk_{M}(J)}=\max\limits_{J\in \mathcal{D}(M)}\frac{|J|}{\rk_{M}(J)},
\end{equation}
where $\mathcal{D}(M)$ is the collection of dependent sets of $M$ (our assumption that $\rk(M)=k<n$ assures that $\mathcal{D}(M)\neq \emptyset$).  A proof of this identity can be found in the thesis of Laso\'{n}~\cite{lason2017coloring}.

The fractional chromatic number of $M$ is the same as the fractional chromatic number of the hypergraph, or clutter, whose edges correspond to the circuits of $M$.  According to~\cite[Theorem~4.6]{BCGHJNSVV2016}
\[
\widehat{\alpha}(I(H))=\frac{\chi_f(H)}{\chi_f(H)-1},
\]
where $I(H)$ is the squarefree monomial ideal whose generators correspond to the edges of a clutter $H$.
Observe that if $H$ is the clutter of circuits of $M$, then $I(H)=I_{\Delta(M)}$.  Thus
\begin{equation}\label{eq:WaldschmidtMatroidfr}
\widehat{\alpha}(I_{\Delta(M)})=\frac{\chi_f(M)}{\chi_f(M)-1}.
\end{equation}
Since $0<\rk(M) <n$,  $M$ must have at least one dependent set with positive rank. Therefore, $\chi_f(M)>1$, ensuring that the denominator does not vanish.  Combining \Cref{eq:frchromaticmatroid} with \Cref{eq:WaldschmidtMatroidfr}   yields
\begin{align*}
    \widehat{\alpha}(I_{\Delta(M)})& =1+\frac{1}{\chi_f(M)-1}=\min_{J\in \mathcal{D}(M)}\left\lbrace 1+\frac{1}{\frac{|J|}{\rk_{M}(J)}-1} \right\rbrace \\& =\min_{J\in \mathcal{D}(M)}\left\lbrace \frac{|J|}{|J|-\rk_{M}(J)} \right\rbrace.
\end{align*}
Since $J\in \mathcal{D}(M)$, $|J|-\rk_M(J)$ ranges between $1$ and $n-k$.  So we can write
\begin{align*}
\widehat{\alpha}(I_{\Delta(M)})=\min_{J\in \mathcal{D}(M)}\left\lbrace \frac{|J|}{|J|-\rk_{M}(J)} \right\rbrace = & \min_{r\in [n-k]} \left\lbrace \frac{|J|}{r} ~:~ |J|-\rk_M(J)=r\right\rbrace
=  \min_{r\in [n-k]}\left\lbrace \frac{d_r(M)}{r}~\right\rbrace,
\end{align*}
as desired.
\end{proof}

\begin{Example}\label{Ex:MDS-uniform} Let $k,n\in\mathbb{N}$ with $k<n$, and let $M={\mathrm{U}}_{n,k}$ be the \emph{uniform matroid} of rank $k$ on the ground set $[n]$.  The bases of $M$ are all the subsets of $[n]$ of size $k$, and the circuits of $M$ are all subsets of $[n]$ of size $k+1$. The uniform matroid $\mathrm{U}_{n,k}$ is the matroid of a \textit{maximum distance separable} (MDS) $[n,k]$-code, which we discuss in more detail in \Cref{sec:pavingMatroids}.

From the description of the circuits above, the Stanley-Reisner ideal of $\Delta(M)$ is
\[
I_{\Delta(M)}=\langle \{x^C~: ~C \subseteq [n] \text{ and } |C|=k+1\}\rangle.
\]
Recall that $d_1(M)$ is the minimum size of a circuit of $M$, so $d_1(M)=\alpha(I_{\Delta(M)})=k+1$.  By \Cref{lem:genMindistMatroid}~\eqref{lem:genMindistMatroidc}, $d_s(M)=k+s$ for all $ s\in [n-k]$.  It follows from \Cref{gHW-lb} that $\alpha(I_{\Delta(M)}^{(s)})\le k+s$ for all $s\in [n-k]$.  Since $\{\alpha(I_{\Delta(M)}^{(s)})\}_{s\ge 1}$ is an increasing sequence, we have $\alpha(I_{\Delta(M)}^{(s)})=k+s$ for $s\in[n-k]$.

Observe that $I_{\Delta(M)}$ is the ideal generated by the $(k+1)$-fold products of the variables $x_1,\ldots,x_n$; or more classically, this is the defining ideal of a {\em monomial star configuration of codimension $n-k$}. The initial degree of any symbolic power of a monomial star configuration is known; see \cite[Corollary 4.6]{GHM13}.  Later, we recover \cite[Corollary 4.6]{GHM13} in full as a special case of \Cref{cor:sparse-paving}.

Finally, observe that by \Cref{thm:WaldschmidtGenHamWeights},
\[
\widehat{\alpha}(I_{\Delta(M)})=\min_{s\in [n-k]}\left\lbrace \frac{k+s}{s}=1+\frac{k}{s}
\right\rbrace=\frac{n}{n-k},
\]
which is the (well-known) Waldschmidt constant for the Stanley-Reisner ideal of the independence complex of a uniform matroid ${\mathrm U}_{n,k}$ (see, for example, ~\cite[Section~7.2]{BCGHJNSVV2016}).
\end{Example}

In \Cref{gHW-lb}, we observe that \( d_s(M) \ge  \alpha(I^{(s)}_{\Delta(M)}) \). This inequality becomes an equality in \Cref{Ex:MDS-uniform} for all $s\in [n-k]$.
We provide examples in \Cref{sec:noetherian-rees-algebra} (\Cref{ex:completeIntersections} and \Cref{ex:thetaMatroids}) that demonstrate the possibility of a strict inequality.  The correlation between these quantities is tied to the subadditive property of generalized Hamming weights, as we will see in subsequent sections.

\section{Subadditivity and duality for finite sequences}\label{sec:subadd-finite-seq}
In this section, we distill the relationship between generalized Hamming weights of matroids and the initial degrees of symbolic powers into purely sequence-theoretic terms, with an emphasis on subadditivity and its interaction with a form of duality which we call \textit{Wei duality}.  The details are somewhat technical, and while the results in this section are essential for the remainder of the paper, it is feasible for readers to skip this section for now and return to it as needed.

\subsection{The initial degree sequence}\label{ss:InitialDegreeSequence}

Let $\cU$ be a finite subset of $\NN$ satisfying $1\in\cU$ and let $d:\cU\to \NN$ be a function.  For each $a\in \cU$, we use sequence notation and write $d_a$ for $d(a)$.  We will use this index notation consistently for any function from $\cU$ to $\NN$ in this and the following sections.

\begin{Definition}\label{def:init-degree-sequence}
Let $\cU$ be a finite subset of $\NN$ with $1\in\cU$. For a function $d:\cU\to \NN$, define the   \textit{initial degree sequence} $\{\alpha_s(d)\}_{s\in \NN}$ by  
\[
\alpha_s=\alpha_s(d):=\min\left\lbrace \sum\limits_{i\in \cU} d_ib_i: b_i\in\ZZ_{\ge 0} \mbox{ for all } i\in\cU \mbox{ and } \sum\limits_{i\in \cU} ib_i=s \right\rbrace.
\]
\end{Definition}

We require $1\in\cU$ so that, for each $s\in\NN$, there is at least one tuple of nonnegative integers $(b_i)_{i\in\cU}$ satisfying $\sum\limits_{i\in \cU} ib_i=s$; namely $b_1=s$ and $b_i=0$ for all $i\neq 1\in \cU$.  This ensures that the minimum defining $\alpha_s$ is taken over a nonempty set and $\alpha_s\le sd_1$.

For what follows, recall that a sequence $\{x_i\}_{i\in\NN}\subset \RR$ is called \textit{subadditive} if $x_{s+t}\le x_s+x_t$ for all $s,t\in\NN$.

\begin{Lemma}\label{lem:appWald}
Let $\cU\subset \NN$ and $d:\cU\to \NN$ be as in \Cref{def:init-degree-sequence}, and let $\alpha_s=\alpha_s(d)$.  Then,
\begin{enumerate} 
\item \label{lem:appWalda}$\{\alpha_s\}_{s\in \NN}$ is a subadditive sequence and
\item \label{lem:appWaldb} $\lim\limits_{s\to\infty} \dfrac{\alpha_s}{s}=\min\left\lbrace \dfrac{d_i}{i}: i\in\cU\right\rbrace$.
\end{enumerate}
\end{Lemma}
\begin{proof}
We first show that $\{\alpha_s\}_{s \in \NN}$ is a subadditive sequence.  Let $s,t\in \NN$. By definition, $\alpha_{s}=\sum\limits_{i\in\cU} d_ib_i$ and $\alpha_{t}=\sum\limits_{i\in\cU} d_ic_i$ for some nonnegative integers $\{b_i:i\in \cU\}$ and $\{c_i:i\in\cU\}$ satisfying $\sum\limits_{i\in\cU}ib_i=s$ and $\sum\limits_{i\in\cU}ic_i=t$.  Therefore
\[
\alpha_s+\alpha_t = \sum\limits_{i\in\cU} d_ib_i+\sum\limits_{i\in\cU} d_ic_i=\sum\limits_{i\in\cU} d_i(b_i+c_i).
\]
Moreover, $\sum\limits_{i\in\cU}i(b_i+c_i)=s+t$, so by definition of $\alpha_{s+t}$,
\[
\alpha_{s+t}\le \sum\limits_{i\in\cU} d_i(b_i+c_i)=\alpha_s+\alpha_t,
\]
and thus $\{\alpha_s\}_{s \in \NN}$ is subadditive.  In particular, by Fekete's subadditive lemma,  $\lim\limits_{s\to\infty}\dfrac{\alpha_s}{s}=\inf\limits_{s\in\NN}\left\lbrace\dfrac{\alpha_s}{s}\right\rbrace$.

Now we prove $\inf\limits_{s\in\NN}\left\lbrace\dfrac{\alpha_s}{s}\right\rbrace=\min\left\lbrace \dfrac{d_i}{i}: i\in\cU\right\rbrace$.  We appeal to convex geometry.  Let $\RR^\cU$ be the $\RR$-vector space with standard basis vectors $\{\vec{e}_i:i\in\cU\}$ indexed by $\cU$.  Define
\[
A=\left\lbrace \sum\limits_{i\in\cU}\frac{b_i}{s}d_i\vec{e}_i~:~ b_i\in\ZZ_{\ge 0} \mbox{ for all } i\in \cU, s\in\NN, \mbox{ and } \sum\limits_{i\in \cU} ib_i=s\right\rbrace.
\]
Consider the functional $\beta=\sum\limits_{i\in\cU} y_i\in (\RR^{\cU})^\vee$, where $\{y_i~:~i\in\cU\}$ is the basis of $(\RR^{\cU})^\vee$ satisfying $y_i(\vec{e}_j)=\delta_{ij}$, where $\delta_{ij}=0$ if $i\neq j$ and $\delta_{ij}=1$ if $i=j$.  When we evaluate $\beta$ on some $\vec{x}=\sum\limits_{i\in\cU}\frac{b_i}{s}d_i\vec{e}_i\in A$, we obtain $\beta(\vec{x})=\sum\limits_{i\in\cU} \frac{b_i}{s}d_i$.  It is apparent from the definitions of $\alpha_s$ and $A$ that $\inf\limits_{\vec{x}\in A}\{\beta(\vec{x})\}=\inf\limits_{s\in\NN}\{\alpha_s/s\}$.

Now, let $\Delta$ be the simplex in $\RR^\cU$ which is the convex hull of $\left\{\frac{d_i}{i}\vec{e}_i:i\in \cU\right\}$.  We claim $A\subset\Delta$.
Suppose $\vec{x}\in A$ and $\vec{x}=\sum\limits_{i\in\cU}\frac{b_i}{s}d_i\vec{e}_i=\sum\limits_{i\in\cU}\frac{ib_i}{s}\frac{d_i}{i}\vec{e}_i\in A$.  Since $\vec{x}\in A$, then $\sum\limits_{i\in\cU} ib_i=s$, and so $\vec{x}$ is a convex combination of the vertices of $\Delta$. Hence $\vec{x}\in\Delta$.

It follows that $\inf\limits_{\vec{x}\in\Delta}\{\beta(\vec{x})\}\le \inf\limits_{\vec{x}\in A}\{\beta(\vec{x})\}$.  Since $\Delta$ is a simplex in $\RR^\cU$, it follows that $\inf\limits_{\vec{x}\in\Delta}\{\beta(\vec{x})\}$ is achieved along a face of $\Delta$ (see e.g. \cite[Chapter~2]{Ziegler95}) and hence at a vertex of $\Delta$, so $\inf\limits_{\vec{x}\in\Delta}\{\beta(\vec{x}\}=\min\left\{\frac{d_i}{i}:i\in\cU\right\}$.  Observe that $\frac{d_i}{i}\vec{e}_i\in A$ for $i\in\cU$ -- take $s=i$, $b_i=1$, and $b_j=0$ for $j\in\cU, j\neq i$.  Thus we have $\inf\limits_{\vec{x}\in A}\{\beta(\vec{x})\}=\min\left\{\frac{d_i}{i}:i\in\cU\right\}$ as well.
\end{proof}

We now consider how the subadditive properties of $d$ impact the sequence $\{\alpha_s\}_{s\in\NN}$.  For this, we need to define what subadditivity means for $\{d_i\}_{i\in\cU}$.

\begin{Definition}\label{def:subadd-dfunction}
Let $\cU\subset\NN$ and $d:\cU\to \NN$ be as in \Cref{def:init-degree-sequence}.  For a fixed $s\in\cU$, we say $d_s$ is a \textit{subadditive term} of $d$ if, for all $k\in\NN$ and all $a_1,\ldots,a_k\in\cU$ (not necessarily distinct) so that $\sum\limits_{i=1}^k a_i=s$, we have  $d_s\le \sum\limits_{i=1}^k d_{a_i}$.  Similarly, we say that $d_s$ is a \textit{strictly subadditive term} of $d$ if, for all $k\in\NN$ and all $a_1,\ldots,a_k\in\cU$ (not necessarily distinct) so that $\sum\limits_{i=1}^k a_i=s$, we have $d_s< \sum\limits_{i=1}^k d_{a_i}$.

By convention, the term $d_1$ is always considered both a subadditive term and a strictly subadditive term of $d$. We say that the function $d$ is subadditive if all its terms are subadditive.
\end{Definition}

\begin{Remark}
If $\cU=[n]=\{1,\ldots,n\}$ for some $n\in\NN$ and $d:\cU\to\NN$ is some function, then it is straightforward to check that $d$ is subadditive in the sense of \Cref{def:subadd-dfunction} if and only if $d_{i+j}\le d_i+d_j$ whenever $i,j,i+j\in \cU$.
\end{Remark}

\begin{Proposition}\label{prop:subadd-dfunction}
Let $\cU\subset\NN$ and $d:\cU\to \NN$ be as in \Cref{def:init-degree-sequence}. The following statements characterize the relationship between the subadditive terms of $d$ and its initial degree sequence $\{\alpha_s(d)\}_{s\in\NN}$. 
\begin{enumerate}  
\item \label{prop:subadd-dfunctiona} For any $s\in\cU$, $d_s$ is a subadditive term of $d$ if and only if $\alpha_s=d_s$.

\item \label{prop:subadd-dfunctionb} $d_s$ is a strictly subadditive term of $d$ if and only if $d_s< \sum\limits_{i\in\cU}b_id_i$ for all non-negative sets of integers $\{b_i:i\in\cU\}$ so that $\sum\limits_{i\in\cU} ib_i=s$ and $b_s=0$.

\item \label{prop:subadd-dfunctionc} $\alpha_s(d)=\alpha_s(d|_{\cU'})$, where $\cU'=\{i\in \cU: d_i \mbox{ is strictly subadditive}\}$. In other words, in order to compute $\alpha_s$, it suffices to only consider the strictly subadditive terms of $d$.

\item \label{prop:subadd-dfunctiond} $\lim\limits_{s \to \infty}\left\lbrace\frac{\alpha_s}{s}\right\rbrace=\min \limits_{i\in\cU'}\left\lbrace\frac{d_i}{i}\right\rbrace.
$
\end{enumerate}
\end{Proposition}

\begin{proof}
    For part \eqref{prop:subadd-dfunctiona}  observe that by \Cref{def:subadd-dfunction} we have $d_s$ is a subadditive term of $d$ if and only if $d_s\le \sum\limits_{i=1}^kd_{a_i}$ whenever $\{a_1, \ldots, a_k\}\subseteq \cU$ with $\sum\limits_{i=1}^ka_i=s$. For any such choice of $\{a_1, \ldots, a_k\}$ let $b_i\in\ZZ_{\ge 0}$ be the number of times that $i\in\cU$ appears in the sequence $\{a_1,\ldots,a_k\}$. Therefore, $d_s$ is subadditive if and only if $d_s\le \sum\limits_{i\in\cU}b_id_i$ for all non-negative sets of integers $\{b_i:i\in\cU\}$ so that $\sum\limits_{i\in\cU} ib_i=s$. In other words, $d_s\le 
    \alpha_s$. On the other hand, $\alpha_s\le d_s$ by construction. Thus, the result follows.

    Part \eqref{prop:subadd-dfunctionb} follows from \eqref{prop:subadd-dfunctiona} by noticing that $\sum\limits_{i\in \cU}ib_i=s$ implies that either $b_s=1$ or $b_s=0$. In the first case, $b_i=0$ for all $i\neq s$. Therefore, $d_s$ is a strictly subadditive term of $d$ if and only if $b_s=0$ and $d_s< \sum\limits_{i\in\cU}b_id_i$ for all non-negative sets of integers $\{b_i:i\in\cU\}$ so that $\sum\limits_{i\in\cU} ib_i=s$.

    For part \eqref{prop:subadd-dfunctionc} suppose that $d_\ell$ is \textit{not} strictly subadditive for some $\ell\in\cU$.  By part \eqref{prop:subadd-dfunctionb}, there exist non-negative integers $\{c_i:i\in\cU\}$ with $c_\ell=0$ so that $\sum\limits_{i\in\cU} ic_i=\ell$ and $\sum\limits_{i\in \cU}{c_id_i}\le d_\ell$.

Now fix $s\in\NN$.  Let $\{b_i\in\ZZ_{\ge 0}:i\in\cU\}$ satisfy $\sum\limits_{i\in\cU} ib_i=s$ and suppose that $b_\ell\neq 0$.  Then
\[
\sum\limits_{i\in\cU} b_id_i\ge \sum\limits_{i\neq \ell\in\cU} (b_i+b_\ell c_i)d_i.
\]
Observe also that 
\begin{align*}
\sum\limits_{i\neq\ell\in\cU} i(b_i+b_\ell c_i)
& =\sum\limits_{i\neq\ell\in\cU}ib_i+b_\ell\sum\limits_{i\in\cU}ic_i =\sum\limits_{i\neq \ell\in\cU} ib_i+\ell b_\ell  =\sum\limits_{i\in\cU} ib_i=s.
\end{align*}
Therefore, it suffices to take the prescribed minimum over indices coming from $\cU'$, as desired.

Finally, for part \eqref{prop:subadd-dfunctiond} notice that $\alpha_s(d)=\alpha_s(d|_{\cU'})$ by part \eqref{prop:subadd-dfunctionc} and hence 
$$\lim\limits_{s \to \infty}\left\lbrace\frac{\alpha_s(d)}{s}\right\rbrace=\lim\limits_{s \to \infty}\left\lbrace\frac{\alpha_s(d|_{\cU'})}{s}\right\rbrace=\min_{i\in\cU'}\left\lbrace\frac{d_i}{i}\right\rbrace,
$$
where the last equality follows from \Cref{lem:appWald}~\eqref{lem:appWaldb}.
\end{proof}

\subsection{Duality for increasing sequences}\label{ss:WeiDual}
In this section, we restrict our attention to those functions $d:[k]\to[n]$, which are strictly increasing, where $k\le n$ are positive integers.  We introduce a duality for such functions which is inspired by the relationship between generalized Hamming weights of a matroid and its dual (\Cref{lem:genMindistMatroid}~\eqref{lem:genMindistMatroide}).  In \Cref{ss:IDS-GHWs}, we will apply these results to generalized Hamming weights.
Our main duality operation will be a composite of the following two duality operations.
\begin{Definition}\label{def:bar-function}
Let $d:[k]\to[n]$ be a strictly increasing function.  We define $\overline{d}:[k]\to[n]$ as the function
\[
\overline{d}_i=n+1-d_{k+1-i}.
\]
If $\{d_j:j\in [k]\}$ is a proper subset of $[n]$ (i.e. $n>k$) then we define the \textit{gap function} $d^{\gap}:[n-k]\to [n]$ of $d$ by 
\[
d^{\gap}_i:= \mbox{the } i^{\text{\scriptsize{th}}} \mbox{ element of } [n]\setminus \{d_j:j\in [k]\} \mbox{ read in increasing order}.
\] 
That is, $d^{\gap}_1$ is the minimum element of $[n]\setminus \{d_j:j\in [k]\}$ and, for $2\le i\le n-k$, $d^{\gap}_i$ is the minimum element of $[n]\setminus \left(\{d_j:j\in [k]\}\cup \{d^{\gap}_j:1\le j<i\}\right)$. 
\end{Definition}

\begin{Lemma}\label{lem:basicdualfacts}
If $d:[k]\to[n]$ is a strictly  increasing function with $n>k$, then
\begin{enumerate} 
\item \label{lem:basicdualfacts-a} $\overline{\overline{d}}=d$,
\item \label{lem:basicdualfacts-b} $(d^{\gap})^{\gap}=d$, and
\item \label{lem:basicdualfacts-c} $(\overline{d})^{\gap}=\overline{d^{\gap}}$.
\end{enumerate}
\end{Lemma}
\begin{proof}
These are clear.
\end{proof}

The following lemma gives a formula for the gap function.

\begin{Lemma}\label{lem:gap-formula}
Let $d:[k]\to[n]$ be a strictly increasing function with $n>k$ and let $d^{\gap}:[n-k]\to [n]$ be its gap function.   For an integer $i\in [n-k]$, define
\[
\ell_i:=
\begin{cases}
\max\{j\in [k]: d_j-j<i\} & \mbox{if } d_1-1<i\\
0 & \mbox{if } d_1-1\ge i
\end{cases}
\]
and
\[
L_i := 
\begin{cases}
\min\{j\in [k]: d_j-j\ge i\} & \mbox{if } d_k-k\ge i\\
k+1 & \mbox{if } d_k-k< i.
\end{cases}
\]
Then, $d^{\gap}_i=i+\ell_i=i-1+L_i$ for all $i\in [n-k]$.
\end{Lemma}
\begin{proof}
Observe that, since $d$ is strictly increasing, $\{d_i-i\}_{i=1}^k$ is increasing .    It is straightforward to verify that $\ell_i= L_i-1$ for $i\in [n-k]$.
 To prove the equality $d^{\gap}_i=i+\ell_i=i-1+L_i$, we induct on $i\in [n-k]$.  

First, if $i=1$, $d^{\gap}_1$ is by definition $\min\left\lbrace [n]\setminus \{d_i: 1\le i\le k\}\right\rbrace$.  Since $d_i$ is strictly increasing and $d_1\ge 1$, the first gap equals the first index $j$ so that $d_j>j$.  That is, $d^{\gap}_1=\min\{j:d_j-j\ge 1\}$, as desired.  

Now suppose that for some $1\le i<n-k$, $d^{\gap}_i=i+\ell_i=i-1+L_i$.  If $\ell_i=k$, then $d_k-k<i$.  In other words, $d_k<i+k=d_i^{\gap}$.   In this case, every integer larger than or equal to $i+k$ and at most $n$ is in $[n]\setminus\{d_i:1\le i\le k\}$.  So $d^{\gap}_{i+1}=d_i+1=i+k+1$.  Since $d_k-k<i$, we also have $d_k-k<i+1$, so $\ell_{i+1}=\ell_i=k$.  Therefore, $d^{\gap}_{i+1}=i+1+\ell_{i+1}$, as desired.  Henceforth, we may assume $\ell_i<k$.

By definition of $\ell_i$, either $\ell_i=0$ which means $d_1 -1\ge i$ or $d_{\ell_i}-\ell_i<i$, so $d_{\ell_i}<i+\ell_i=d_i^{\gap}$. Now suppose $d_{\ell_i+1}-(\ell_i+1)\ge i+1$.  Then $d_{\ell_i+1}\ge i+\ell_i+2=d^{\gap}_{i}+2$.  In this case, $d^{\gap}_i+1$ is also a gap, and $\ell_i=\ell_{i+1}$, so this matches the given formula.

 Finally, suppose that $d_{\ell_i+1}-(\ell_i+1)=i$, so $d_{\ell_i+1}=i+\ell_i+1=d^{\gap}_i+1$.  By definition, $\ell_{i+1}$ is the maximum index $j$ so that $d_j-j<i+1$, hence $\ell_i+1\le\ell_{i+1}$.  Since $\{d_j-j\}_{j=1}^k$ is an increasing function, $d_{\ell_{i+1}}-\ell_{i+1}\ge d_{\ell_i+1}-(\ell_i+1)=i$.  Thus, $\ell_{i+1}$ is the maximum index $j$ so that $d_j-j=i$.  It follows that, for $1\le j\le \ell_{i+1}-\ell_{i}$, $d_{\ell_i+j}=i+\ell_i+j=d^{\gap}_i+j$.  Furthermore, either $\ell_{i+1}=k$ or $d_{\ell_{i+1}+1}-(\ell_{i+1}+1)\ge i+1$, so $d_{\ell_{i+1}+1}\ge i+1+\ell_{i+1}+1$.  In either case, $i+1+\ell_{i+1}$ is a gap, and it is the smallest gap that appears after $d^{\gap}_i$, and so $d^{\gap}_{i+1}=i+1+\ell_{i+1}$, as desired. 
\end{proof}

\begin{Definition}\label{def:WeiDual}
Let $d:[k]\to [n]$ be a strictly increasing function with $n>k$.  The \textit{Wei dual} of $d$, written $d^*$, is the gap function of $\overline{d}$.  That is, $d^*_i$ is the $i^{\text{\scriptsize{th}}}$ element of $[n]\setminus \{n+1-d_i:  i\in [k]\}$ read in increasing order.  We say $d$ is \textit{cosubadditive} if $d^*$ is subadditive.
\end{Definition}

\begin{Remark}
If $\C$ is an $[n,k]$-code and $d:[k]\to [n]$ is defined by $d_i:=d_i(\C)$ -- where $d_i(\C)$ is the $i$-th generalized Hamming weight of $\C$ -- then Wei proves in~\cite[Theorem~3]{We} that $\{d^*_i:1\le i\le n-k\}$ is the weight hierarchy of the dual code of $\C$ (see \Cref{lem:genMindistMatroid} for matroids).  This is why we have chosen the name `Wei dual' for $d^*$.
\end{Remark}

Our primary finding in this section is a characterization of the subadditivity of \( d^* \) based solely on \( d \), along with a formula for \( \alpha_s(d) \) under the condition that both \( d \) and \( d^* \) are subadditive. We will explore the subadditivity conditions through a series of lemmas.

\begin{Lemma}\label{lem:bar-superadd}
Let $d:[k]\to[n]$ be a strictly increasing function with $n>k$.  The following are equivalent:
\begin{enumerate} 
\item $d_r+d_k\le d_a+d_b$ for all $a,b,r\in[k]$ so that $r+k=a+b$.
\item $\overline{d}_s+\overline{d}_t\le \overline{d}_{s+t-1}+\overline{d}_1$ for all $s,t\in [k]$ so that $s+t-1\in [k]$.
\end{enumerate}
\end{Lemma}
\begin{proof}
We first show $(\rm a)$ implies $(\rm b)$.  Let $s,t\in [k]$ so that $s+t-1\in [k]$.  Put $a=k+1-s$,  $b=k+1-t$, and $r=a+b-k=k+2-s-t$.  Since we assume $(\rm a)$ is true, $d_r+d_k\le d_a+d_b$ and so
\[
\bar{d}_{s+t-1}+\bar{d}_1=n+1-d_r+n+1-d_k\ge n+1-d_a+n+1-d_b=\bar{d}_s+\bar{d}_t.
\]

Now we show $(\rm b)$ implies $(\rm a)$.  Let $a,b,r\in [k]$ so that $r+k=a+b$.  Put $s=k+1-a$ and $t=k+1-b$.  Then $s+t-1=2k+1-(a+b)=2k+1-(r+k)=k+1-r$.  Since we assume $(\rm b)$, $\bar{d}_s+\bar{d}_t\le \bar{d}_{s+t-1}+\bar{d}_1$.  Thus
\[
k+1-d_a+k+1-d_b\le k+1-d_r+k+1-d_k.
\]
Re-arranging, we get $d_r+d_k\le d_a+d_b$, as desired.
\end{proof}

\begin{Lemma}\label{lem:gap-subadd}
Let $d:[k]\to [n]$ be a strictly increasing function with $n>k$.  Then
\begin{enumerate} 
\item \label{lem:gap-subadda} If $d$ is subadditive and $d_k=n$, then $d^{\gap}_s+d^{\gap}_t\le d^{\gap}_{s+t-1}+1$ for all $s,t\in [n-k]$ so that $s+t-1\in [n-k]$.
\item \label{lem:gap-subaddb} If $d_s+d_t\le d_{s+t-1}+1$ for all $s,t\in [k]$ so that $s+t-1\in [k]$, then $d^{\gap}$ is subadditive.
\end{enumerate}
\end{Lemma}
\begin{proof}
For part $(\rm a)$, assume that $d$ is subadditive and $d_k=n$.  Let $s,t\in [n-k]$ so that $s+t-1\in [n-k]$.  It is equivalent to prove that $(d_s^{\gap}-s)+(d_t^{\gap}-t)\le d^{\gap}_{s+t-1}-(s+t-1)$.  Using the notation of \Cref{lem:gap-formula}, it suffices to prove that $\ell_s+\ell_t\le \ell_{s+t-1}$.  Recall
\[
\ell_i:=
\begin{cases}
\max\{j\in [k]: d_j-j<i\} & \mbox{if } d_1-1<i\\
0 & \mbox{if } d_1-1\ge i.
\end{cases}
\]
If $\ell_s=0$ or $\ell_t=0$, we are done since $\{\ell_i\}_{i=1}^{n-k}$ is an increasing sequence.  Henceforth, we assume $\ell_s,\ell_t\ge 1$.

We now show that $\ell_s+\ell_t\le k$.  Assume for the sake of contradiction that $\ell_s+\ell_t>k$.  Then we may choose indices $1\le i\le \ell_s$, $1\le j\le \ell_t$ so that $i+j=k$.  Since $d$ is strictly increasing, $d_i-i\le d_{\ell_s}-\ell_s$ and $d_j-j\le d_{\ell_t}-\ell_t$.  By definition, $d_{\ell_s}-\ell_s\le s-1$ and $d_{\ell_t}-\ell_t\le t-1$.  So we have
\[
s+t-1>(d_{\ell_s}-\ell_s)+(d_{\ell_t}-\ell_t)\ge (d_i-i)+(d_j-j)\ge d_k-k=n-k,
\]
where the final inequality follows since $d$ is subadditive and $i+j=k$ and the final equality follows since $d_k=n$.  This contradicts that $s+t-1\in [n-k]$.  So $\ell_s+\ell_t\le k$.

It now follows that
\[
s+t-1>(d_{\ell_s}-\ell_s)+(d_{\ell_t}-\ell_t)\ge d_{\ell_s+\ell_t}-(\ell_s+\ell_t),
\]
where the final inequality follows since $d$ is subadditive.  Since $d_{\ell_s+\ell_t}-(\ell_s+\ell_t)<s+t-1$, it follows (from the definition of $\ell_i$) that $\ell_s+\ell_t\le \ell_{s+t-1}$.  This finishes the proof of part $(\rm a)$.

For part $(\rm b)$, assume that $d_s+d_t\le d_{s+t-1}+1$ for all $s,t\in [k]$ so that $s+t-1\in [k]$.  Let $i,j\in [n-k]$ so that $i+j\in [n-k]$.  We prove that $d^{\gap}_i+d^{\gap}_j\ge d^{\gap}_{i+j}$.  It suffices to prove that $(d^{\gap}_i-i)+(d^{\gap}_j-j)\ge d^{\gap}_{i+j}-(i+j)$.  Using the notation of \Cref{lem:gap-formula}, it suffices to prove that $(L_i-1)+(L_j-1)\ge L_{i+j}-1$, or equivalently $L_{i+j}\le L_i+L_j-1$.  Recall that
\[
L_i := 
\begin{cases}
\min\{j\in [k]: d_j-j\ge i\} & \mbox{if } d_k-k\ge i\\
k+1 & \mbox{if } d_k-k< i.
\end{cases}
\]
If $L_i+L_j\ge k+2$, we are done, since $L_{i+j}\le k+1$ by definition.  So we assume $L_i+L_j-1\le k$.

By definition, $d_{L_i}-L_i\ge i$ and $d_{L_j}-L_j\ge j$.  Hence
\[
i+j\le (d_{L_i}-L_i)+(d_{L_j}-L_j)\le d_{L_i+L_j-1}-(L_i+L_j-1),
\]
where the final inequality follows from our hypothesis.  Therefore, by definition, $L_{i+j}\le L_i+L_j-1$, as desired.
\end{proof}

Our next result characterizes the subadditivity of $d^*$ solely in terms of $d$.

\begin{Theorem}\label{thm:d^*subadd}
Let $d:[k]\to[n]$ be a strictly increasing function with $n>k$, $d_1>1$, and $d_k=n$.  Then $d$ is cosubadditive if and only if $d_k+d_r\le d_a+d_b$ for all $r,a,b\in [k]$ satisfying $k+r=a+b$.
\end{Theorem}
\begin{proof}
We first show that $d^*_{n-k}=n$.  Since $d_1>1$, $\overline{d}_{k}=n+1-d_{k+1-k}=n+1-d_1<n$.  Therefore, $n$ is the largest gap of $\overline{d}$, and $d^*_{n-k}=n$.

Now assume that $d^*=(\overline{d})^{\gap}$ is subadditive.  By \Cref{lem:basicdualfacts}~\eqref{lem:basicdualfacts-b}, $(d^*)^{\gap}=\overline{d}$ and by \Cref{lem:gap-subadd}~\eqref{lem:gap-subadda}, $\overline{d}$ satisfies that $\overline{d}_s+\overline{d}_t\le \overline{d}_{s+t-1}+\overline{d}_1$ for all $s,t\in [k]$ so that $s+t-1\in [k]$.  Now, by \Cref{lem:basicdualfacts}~\eqref{lem:basicdualfacts-a}, $\overline{\overline{d}}=d$.  By \Cref{lem:bar-superadd}, $d_r+d_k\le d_a+d_b$ for all $a,b,r\in [k]$ so that $a+b=k+r$, as desired.

Finally, assume that $d_r+d_k\le d_a+d_b$ for all $a,b,r\in [k]$ so that $a+b=k+r$.  Since $d_k=n$, $\bar{d}_1=n+1-d_k=1$.  By \Cref{lem:bar-superadd}, $\overline{d}_s+\overline{d}_t\le \overline{d}_{s+t-1}+\bar{d}_1=\overline{d}_{s+t-1}+1$ for all $s,t\in[k]$ so that $s+t-1\in [k]$.  By \Cref{lem:gap-subadd}~\eqref{lem:gap-subaddb}, $(\overline{d})^{\gap}=d^*$ is subadditive, as desired.
\end{proof}

We now characterize subadditivity and cosubadditivity of $d$ using its initial degree sequence.

\begin{Theorem}\label{lem:extended-subadd-alpha-formula}
Let $d:[k]\to [n]$ be a strictly increasing function with $n>k$, $d_1>1$, and $d_k=n$.  Then $d$ is \rsubad{} if and only if $\alpha_s(d)=qd_k+d_r$ for all $s =kq+r \in\NN$ with $r \in [k]$ and $q \in \ZZ_{\ge 0}$.
\end{Theorem}
\begin{proof}

First, suppose that $\alpha_s(d)=qd_k+d_r$ for all $s=kq+r \in\NN$ with $r \in [k]$ and $q \in \ZZ_{\ge 0}$.  By \Cref{lem:appWald}~\eqref{lem:appWalda}, $\{\alpha_s(d)\}_{s \in \NN}$ is a subadditive sequence.  By taking $q=0$ and $r=s \in [k]$, we have $\alpha_s(d)=d_s$ for all $s \in [k]$  so $d$ is subadditive.  Now, let $a,b,r\in [k]$ so that $k+r=a+b$. Note that for any $r \in [k]$,  $\alpha_{k+r}(d)=d_k+d_r$.  Since $\{\alpha_s(d)\}_{s \in \NN}$ is subadditive and $k+r=a+b$, $d_k+d_r=\alpha_{k+r}(d)\le \alpha_a(d)+\alpha_b(d)=d_a+d_b$.  Thus, by \Cref{thm:d^*subadd}, $d$ is cosubadditive.

Now suppose that $d$ is \rsubad{}.  By \Cref{thm:d^*subadd}, $d$ is subadditive and $d_r+d_k\le d_a+d_b$ for all $r,a,b\in[k]$ so that $r+k=a+b$.

We induct on $s\in\NN$ to prove that $\alpha_s(d)=qd_k+d_r$ for all $s=kq+r\in\NN$ with $r \in [k]$ and $q \in \ZZ_{\ge 0}$. 
When $s\in [k]$, $q=0$, and therefor, it is clear from \Cref{prop:subadd-dfunction}~\eqref{prop:subadd-dfunctiona} that $\alpha_s(d)=qd_k+d_s=d_s$.

Now suppose that $s>k$ and $\alpha_t(d)=q_td_k+d_{r_t}$ for all $t=q_tk+r_t<s$ with $r_t \in [k]$ and $q_t \in \ZZ_{\ge 0}$. Pick $b_1,\ldots,b_k\in\ZZ_{\ge 0}$ so that $\sum\limits_{i=1}^k ib_i=s$ and $\alpha_s(d)=\sum\limits_{i=1}^k b_id_i$, which is possible by \Cref{def:init-degree-sequence}.  Let $\ell\in[k]$ be an index for which $b_\ell> 0$.  Observe that, since $\ell\le k<s$, $s-\ell>0$.  So $0<s-\ell=\sum\limits_{i=1}^k ib_i-\ell=\sum\limits_{i\in[k],i\neq \ell}ib_i+\ell(b_\ell-1)$, and thus, $\alpha_s(d)-d_\ell=\sum\limits_{i=1}^k b_id_i-d_\ell=\sum\limits_{i\in[k],i\neq \ell} b_id_i+(b_\ell-1)d_\ell\ge \alpha_{s-\ell}(d)$, by \Cref{def:init-degree-sequence}.

Let $q' \in \ZZ_{\ge 0}$ and $r' \in [k]$ be such that  $s-\ell=q'k+r'$.  By our inductive hypothesis, $\alpha_{s-\ell}(d)=q'd_k+d_{r'}$. We now observe that $\alpha_s(d)\le qd_k+d_r$, where $q \in \ZZ_{\ge 0}$ and $r \in [k]$ so that $s=qk+r$.  This follows since we can choose $b_k=q$, $b_r=1$ and $b_i=0$ for $i \neq r,k$ in \Cref{def:init-degree-sequence}, so $\sum\limits_{i=1}^k ib_i=kq+r=s$, and hence, $\alpha_s(d)\le \sum\limits_{i=1}^k b_id_i=qd_k+d_r$.

From the preceding paragraphs, it follows that
\[
\alpha_{s-\ell}(d)+d_{\ell}=q'd_k+d_{r'}+d_{\ell}\le \alpha_s(d)\le qd_k+d_r.
\]
We now prove that $qd_k+d_r\le q'd_k+d_{r'}+d_{\ell}$, which will imply $\alpha_s(d)= qd_k+d_r$ and finish the induction.

We proceed with two cases based on how $q,q',r,r',$ and $\ell$ are related to one another.  Since $ \ell\in [k]$, the two cases are:
\begin{enumerate}
\item[(1)] $q=q'$ and $r=r'+\ell$ and
\item[(2)] $q=q'+1$ and $r=r'+\ell-k$
\end{enumerate}
In the first case, since $d$ is subadditive, $d_r\le d_{r'}+d_{\ell}$, which implies $qd_k+d_r=q'd_k+d_{r}\le q'd_k+d_{r'}+d_{\ell}$, as desired.  In the second case, since $d$ is \rsubad{} and $k+r=r'+\ell$, $d_k+d_r\le d_{r'}+d_{\ell}$.  Hence,
\[
qd_k+d_r \le qd_k+d_{r'}+d_{\ell} -d_k= q'd_k+d_{r'}+d_\ell,
\]
as desired.
\end{proof}

\begin{Corollary}\label{cor:sun&cosub}
Let $d:[k]\to [n]$ be a strictly increasing function with $n>k$, $d_1>1$, and $d_k=n$.  Then the following are equivalent.
\begin{enumerate} 
\item $d$ is subadditive and $d_r+d_k\le d_a+d_b$ for all $r,a,b\in [k]$ so that $r+k=a+b$.
\item $d$ is \rsubad{}.
\item $d^*$ is \rsubad{}.
\item $\alpha_s(d)=qd_k+d_r$ for all $s=kq+r\in\NN$ with $r \in [k]$ and $q \in \ZZ_{\ge 0}$.
\item $\alpha_s(d^*)=qd^*_{n-k}+d^*_r$ for all $s=(n-k)q+r\in\NN$ with $r \in [n-k]$ and $q \in \ZZ_{\ge 0}$.
\end{enumerate}
\end{Corollary}
\begin{proof}
This follows from \Cref{thm:d^*subadd} and \Cref{lem:extended-subadd-alpha-formula}.
\end{proof}

\subsection{Sequence of Generalized Hamming weights of matroids and their Wei dual}\label{ss:IDS-GHWs}

In this section, we apply the results of \Cref{ss:WeiDual} to the sequence of generalized Hamming weights of a matroid. Throughout this subsection, let $M$ be a matroid of rank $k$ on the ground set $E$ of size $n$, and let $\{d_i(M)\}_{i=1}^{n-k}$ and $\{d_i(M^*)\}_{i=1}^k$ be the generalized Hamming weights of $M$ and $M^*$, respectively. Recall from \Cref{sec:prelims} that for $ r \in [n-k],$ the $r$-th generalized Hamming weight of a matroid $M$ of rank $k$ on a ground set $E$ of size $n$ is  
\[
d_r(M)=\min\{|U|~:~ U \subseteq E \text{ and } |U|-\rk_{M}(U)=r\}.
\]
The following lemma is our primary reason for introducing and studying the Wei dual of a finite sequence in \Cref{ss:WeiDual}.
\begin{Lemma}\label{lem:WeiDualityMatroids}
 The Wei dual of the sequence $\{d_i(M)\}_{i=1}^{n-k}$ of generalized Hamming weights of $M$ is the sequence $\{d_i(M^*)\}_{i=1}^{k}$ of generalized Hamming weights of $M^*$.
\end{Lemma}
\begin{proof}
This follows from \Cref{def:WeiDual} and \Cref{lem:genMindistMatroid}~\eqref{lem:genMindistMatroide}.
\end{proof}

For an integer $0 \le i \le k$, we define the sequence $$f_i^{\max}(M):=\max\{|F|~:~ F\subseteq E \text{ and } \rk_M(F)=i\}.$$ The sequence $\{f_i^{\max}(M)\}_{i=0}^k$ collects the maximum cardinality of flats of each rank of a matroid. It follows from \Cref{cor:equivalentInterpretationsGeneralizedHammingWeights} that  for $r \in [n-k]$, 
\[
d_r(M)=n-f_{n-k-r}^{\max}(M^*).
\] 
The dual version of this is as follows: \begin{align}\label{eq:flat-dual}
  d_r(M^*)=n-f_{k-r}^{\max}(M) \text{ for }  r \in [k].  
\end{align}

In the following two propositions, we summarize equivalent conditions for the sequence of generalized Hamming weights to form a subadditive sequence.

\begin{Proposition}\label{prop:d-sub}
Let $M$ be a matroid of rank $k$ on a ground set $E$ of size $n$. Then, the following are equivalent:
    \begin{enumerate} 
         \item \label{prop:d-suba} $\{d_i(M)\}_{i=1}^{n-k}$ is subadditive.
         \item \label{prop:d-subb} For all $a,b,r \in [k]$ so that $r+k=a+b,$  $d_r(M^*)+d_k(M^*)\le d_a(M^*)+d_b(M^*)$. 
        \item \label{prop:d-subc} For all $s,t \in [k]$ so that $s+t-1 \in[k],$ $\overline{d_s(M^*)}+\overline{d_t(M^*)}\le \overline{d_{s+t-1}(M^*)}+1$ .
        \item \label{prop:d-subd} For all $0 \le i,j \le i+j \le k-1,$ $f_i^{\max}(M) +f_j^{\max}(M) \le f_{i+j}^{\max}(M)$. 
       
    \end{enumerate}
\end{Proposition}
\begin{proof}
Let $d_i=d_i(M)$ for each $i \in [n-k]$.  By \Cref{lem:WeiDualityMatroids}, $d^*=\{d_i(M^*)\}_{i=1}^k$.
Since $M$ and $M^*$ are both loopless, it follows that $d^*_1=d_1(M^*)>1$ and $d^*_k=d_k(M^*)=n$.  The equivalence of $(\rm a)$ and $(\rm b)$ follows by applying \Cref{thm:d^*subadd} to $d^*$.  The equivalence of $(\rm b)$ and $(\rm c)$ follows from \Cref{lem:bar-superadd}.

      So, we show that $(\rm c) \iff (\rm d)$. By \Cref{def:bar-function}, $\overline{d_i(M^*)}=n+1-d_{k+1-i}(M^*)$ for each $i\in [k]$. This implies that $\overline{d_s(M^*)}+\overline{d_t(M^*)}= n+1-d_{k+1-s}(M^*) +n+1-d_{k+1-t}(M^*)= n-d_{k-(s-1)}(M^*) +n-d_{k-(t-1)}(M^*)+2$ and  $\overline{d_{s+t-1}(M^*)}+1=n+1-d_{k+1-(s+t-1)}(M^*) +1=n-d_{k-(s+t-2)}(M^*) +2$.  Thus, $(\rm c)$ holds if and only if for all  $s,t \in [k]$ so that  $s+t-1 \in [k]$, the inequality below holds \begin{align*}
         n-d_{k-(s-1)}(M^*) +n-d_{k-(t-1)}(M^*) \le n-d_{k-(s+t-2)}(M^*) .
    \end{align*} In view of \Cref{eq:flat-dual}, statement $(\rm c)$ holds if and only if the inequality $f_{s-1}^{\max}(M)+f_{t-1}^{\max}(M) \le f_{s+t-2}^{\max}(M)$ is satisfied for all $s,t\in [k]$ so that $ s+t-1 \in [k]$. By rearranging the subscripts from the previous line, we conclude that statement  $(\rm c)$ is equivalent to statement $(\rm d)$.  Hence, the assertions follow. 
\end{proof}

\begin{Proposition}\label{prop:d-cosub}
Let $M$ be a matroid of rank $k$ on a ground set $E$ of size $n$. Then, the following are equivalent:
    \begin{enumerate} 
         \item \label{prop:d-cosuba} $\{d_i(M^*)\}_{i=1}^{k}$ is subadditive.
         \item \label{prop:d-cosubb} For all $1 \le i,j  \le k-1$ with $i+j\ge k$,  $f_i^{\max}(M) +f_j^{\max}(M) \le f_k^{\max}(M)+f_{i+j-k}^{\max}(M)$. 
\end{enumerate}
\end{Proposition}
\begin{proof}
The equivalence of $(\rm a)$ and $(\rm b)$ follows using \Cref{eq:flat-dual}. Indeed, in view of \Cref{eq:flat-dual}, for all $a,b \in [k]$ so that $a+b \in [k]$,  $d_{a+b}(M^*)\le d_a(M^*)+d_b(M^*)$  if and only if $n-f_{k-a-b}^{\max}(M) \le n-f_{k-a}^{\max}(M)+n-f_{k-b}^{\max}(M)$. Thus, by rearranging the subscripts as $i=k-a, j=k-b$ and simplifying, we get that $\{d_i(M^*)\}_{i=1}^{k}$ is subadditive if and only if for all $0 \le i,j  \le k-1$ with $i+j\ge k$,  $f_i^{\max}(M) +f_j^{\max}(M) \le n +f_{i+j-k}^{\max}(M)=f_k^{\max}(M) +f_{i+j-k}^{\max}(M)$. Hence, the assertion follows. 
\end{proof}

We conclude this section by providing a sufficient condition for the subadditivity and cosubadditivity of generalized Hamming weights based on the discrete convexity (see \Cref{rmk:dc}) of the sequence $\{f_i^{\max}(M)\}_{i=0}^{k}$.
\begin{Proposition}\label{prop:flat-sub&cosub}
Let $M$ be a matroid of rank $k$ on a ground set $E$ of size $n$. If $f_{i+1}^{\max}(M)-f_{i}^{\max}(M) \ge f_{i}^{\max}(M)-f_{i-1}^{\max}(M)$ for all $i=1,\ldots,k-1$,  then $\{d_i(M)\}_{i=1}^{n-k}$ is subadditive and cosubadditive. 
\end{Proposition}

\begin{proof}
 It follows from \Cref{prop:d-sub} that $\{d_i(M)\}_{i=1}^{n-k}$ is subadditive if and only if $f_i^{\max}(M)+f_j^{\max}(M)\le f_{i+j}^{\max}(M)$ for all $0\le i, j \le i+j \le k-1.$ Since $M$ is loopless $f_0^{\max}(M)=0$, and hence, $f_0^{\max}(M)+f_j^{\max}(M)\le f_{j}^{\max}(M)$ for all $0\le  j \le k-1.$ So, take $1 \le i\le j \le i+j \le k-1$, and consider \begin{align*}
     f_{i+j}^{\max}(M)-f_j^{\max}(M)&=\sum\limits_{\ell=1}^i\left(f_{\ell+j}^{\max}(M)-f_{\ell-1+j}^{\max}(M)\right)\\&\ge \sum\limits_{\ell=1}^i\left(f_{\ell+j-1}^{\max}(M)-f_{\ell+j-2}^{\max}(M)\right)\\&  ~~~~~~~~~~~~\vdots \text{ (use the hypothesis } j \text{ times on each summand) }\\& \ge \sum\limits_{\ell=1}^i\left(f_{\ell+j-j}^{\max}(M)-f_{\ell-1+j-j}^{\max}(M)\right)\\&= \sum\limits_{\ell=1}^i\left(f_{\ell}^{\max}(M)-f_{\ell-1}^{\max}(M)\right)\\&=f_i^{\max}(M)-f_0^{\max}(M)=f_i^{\max}(M). 
 \end{align*}  This proves that $\{d_i(M)\}_{i=1}^{n-k}$ is subadditive. 

 It follows from \Cref{prop:d-cosub} that $\{d_i(M)\}_{i=1}^{n-k}$ is cosubadditive if and only if for all $0 \le i \le j  \le k-1$ with $i+j\ge k$,  $f_i^{\max}(M) +f_j^{\max}(M) \le f_k^{\max}(M)+f_{i+j-k}^{\max}(M)$.  So, take $1 \le i\le j \le  k-1$ with $i+j \ge k$, and consider \begin{align*}
     f_{k}^{\max}(M)-f_j^{\max}(M)&=\sum\limits_{\ell=1}^{k-j}\left(f_{\ell+j}^{\max}(M)-f_{\ell-1+j}^{\max}(M)\right)\\&\ge \sum\limits_{\ell=1}^{k-j}\left(f_{\ell+j-1}^{\max}(M)-f_{\ell+j-2}^{\max}(M)\right)\\&  ~~~~~~~~~~~~\vdots \text{ (use the hypothesis } k-i \text{ times on each summand) }\\& \ge \sum\limits_{\ell=1}^{k-j}\left(f_{\ell+j-k+i}^{\max}(M)-f_{\ell-1+j-k+i}^{\max}(M)\right)\\&=f_{i}^{\max}(M)-f_{i+j-k}^{\max}(M). 
 \end{align*}  This proves that $\{d_i(M)\}_{i=1}^{n-k}$ is cosubadditive. 
\end{proof}

\begin{Remark}\label{rmk:dc}
In the literature, the condition $f_{i+1}^{\max}(M)-f_{i}^{\max}(M) \ge f_{i}^{\max}(M)-f_{i-1}^{\max}(M)$ for all $i=1,\ldots,k-1$ is taken as a definition of discrete convexity (e.g. \cite{Yuceer-2002}).
\end{Remark}

\section{Finitely Generated Rees Algebras and Matroids}\label{sec:noetherian-rees-algebra}

In this section, we explain how the initial degree sequence of a function, studied in \Cref{sec:subadd-finite-seq}, arises naturally in the context of finitely generated Rees algebras.  We then give an explicit set of generators for the symbolic Rees algebra of the Stanley-Reisner ideal of a matroid (\Cref{thm:GeneratorsOfSymbolicReesAlgebraOfMatroid}).  The degrees of these generators are closely tied to the generalized Hamming weights of the matroid; applying the theoretical framework of \Cref{sec:subadd-finite-seq} in this context yields a number of results concerning the initial degree of the symbolic powers of the Stanley-Reisner ideal of the matroid and its Waldschmidt constant (\Cref{thm:SR}).

\subsection{Noetherian Rees algebras}
Let $S=\bigoplus\limits_{i\ge 0} S_i$ be a Noetherian graded integral domain and $\mathcal{F}=\{I_i\}_{i\ge 1}$ be a graded family of homogeneous ideals with the convention that $I_0=S$.  Recall that $\{I_i\}_{i\ge 1}$ is a graded family provided that $I_iI_j\subset I_{i+j}$ for all $i,j\ge 1$.  The \textit{Rees algebra} of $\mathcal{F}$ is the graded $S[T]$-algebra
\[
\cR(\mathcal{F}):=\bigoplus_{i\ge 0}I_iT^i\subseteq S[T],
\]
where $T$ is an indeterminant over $S$.

Let $K=\mbox{frac}(S)$, the fraction field of $S$. 
 Recall that a \textit{discrete valuation} $\nu:K \setminus \{0\} \to\ZZ$ is a function which satisfies $\nu(fg)=\nu(f)+\nu(g)$ and $\nu(f+g)\ge \min\{\nu(f),\nu(g)\}$ for all $f,g\in K \setminus \{0\}$.  A valuation on $K$ is completely determined by its values on $S$, so we will often consider the domain of the valuation to be $S$ rather than $K$.  We say a valuation $\nu: K \setminus \{0\}\to \ZZ$ is an $S$-valuation if $\nu$ only takes non-negative values on $S \setminus \{0\}$.  For an $S$-valuation $\nu$ and for an ideal $J\subseteq S$, let $\nu(J):=\min\{\nu(f):f\in J \setminus \{0\}\}$.

 The valuation relevant for our work is the initial degree valuation $\alpha~:~S\setminus\{0\} \to \ZZ$ defined by $\alpha(f):=\min\{i~:~f\in\bigoplus_{k\ge i} S_k\}$.  If $f$ is homogeneous -- that is, $f\in S_k$ for some $k$ -- then $\alpha(f)=k$.

If $\mathcal{F}=\{I_i\}_{i\ge 1}$ is a graded family of ideals, then $\{\nu(I_i)\}_{i\ge 1}$ is a subadditive sequence.  This follows from the fact that $\nu(I_sI_t)=\nu(I_s)+\nu(I_t)$ (because $\nu$ is a valuation) and $I_sI_t\subseteq I_{s+t}$ (because $\mathcal{F}$ is a graded family).  It follows that $\lim\limits_{s\to\infty} \frac{\nu(I_s)}{s}$ exists by Fekete's lemma (also see \cite{HJKN23}). 

\begin{Definition}
Let $\mathcal{F}=\{I_i\}_{i\ge 1}$ be a graded family of homogeneous ideals in $S$, and $\nu~:~ S \setminus \{0\} \to\ZZ$ an $S$-valuation.  We define
\[
\widehat{\nu}(\mathcal{F}):=\lim_{s\to\infty}\frac{\nu(I_s)}{s}.
\]
If $\mathcal{F}=\{I^{(i)}\}_{i\ge 1}$ is the graded family of symbolic powers of an ideal $I$, then we write $\widehat{\nu}(I)$ instead of $\widehat{\nu}(\mathcal{F})$.
\end{Definition}

The work in \cite{DFMS19} established the significance of the invariant $\widehat{\nu}(I)$ in the context of asymptotic resurgence for homogeneous ideals. Building on this perspective, the authors in \cite{HJKN23, HKJN24} introduced the invariant $\widehat{\nu}(\mathcal{F})$, which plays a central role in addressing the containment problem for pairs of graded families of ideals. In this context, we provide a sequential method for calculating these invariants specifically for Noetherian graded families of ideals: 

\begin{Theorem}\label{thm:ReesAlgebraFiltration}
Let $\mathcal{F}=\{I_i\}_{i\ge 1}$ be a graded family of homogeneous ideals, and $\nu$ be an $S$-valuation.  Suppose that $\cR(\mathcal{F})$ is finitely generated as an $S$-algebra by $\mathcal{G}=\{f_1T^{n_1},f_2T^{n_2},\ldots,f_kT^{n_k}\}$, where $f_1,\ldots,f_k$ are homogeneous elements of $S$ and $n_1,\ldots,n_k\in\NN$ (these are not necessarily distinct).  Let $\cU=\{n_1,\ldots,n_k\}$ and define $d:\cU\to\NN$ by $d_i=\min\{\nu(f_j)~:~j\in [k] \text{ and } n_j=i\}$ for $i\in\cU$.  Then, for any $s\in\NN$,
\[
\nu(I_s)=\min\left\lbrace \sum\limits_{i\in\cU} d_ib_i:b_i\in\ZZ_{\ge 0} \mbox{ for all } i\in\cU \mbox{ and } \sum\limits_{i\in\cU} ib_i=s \right\rbrace.
\]
In other words, $\{\nu(I_s)\}_{s\ge 1}$ is {the initial degree sequence} of $d$ -- see \Cref{def:init-degree-sequence}.  It follows that

\begin{enumerate} 
\item $\nu(I_s)=d_s$ for $s\in\cU$ if and only if $d_s$ is a subadditive term of $d$.
\item $\widehat{\nu}(\mathcal{F})=\min\left\lbrace\dfrac{d_i}{i}:i\in\cU\right\rbrace=\min\left\lbrace\dfrac{d_i}{i}:i\in\cU, d_i \mbox{ is a strictly subadditive term of } d\right\rbrace$
\end{enumerate}
\end{Theorem}
\begin{proof}
The last two items follow immediately from \Cref{prop:subadd-dfunction}~\eqref{prop:subadd-dfunctiona} and \Cref{lem:appWald}~\eqref{lem:appWaldb}, once we prove that $\{\nu(I_s)\}_{s\ge 1}$ is the initial degree sequence of $d$, so we focus on proving this fact.

Let $L:= \min\left\lbrace \sum\limits_{i\in\cU} d_ib_i:b_i\in\ZZ_{\ge 0} \mbox{ for all } i\in\cU \mbox{ and } \sum\limits_{i\in\cU} ib_i=s\right\rbrace$.  Let $\{b_i:i\in\cU\}\subseteq \ZZ_{\ge 0}$ be such that $\sum\limits_{i\in\cU}ib_i=s$.  For each $i\in\cU$, choose $g_iT^i\in\mathcal{G}$ so that $\nu(g_i)=d_i$ (this is possible by the definition of $d_i$).  In other words, $g_i\in I_i$.  Put $g=\prod\limits_{i\in\cU} g_i^{b_i}$.  Since $\nu$ is a valuation, $\nu(g)=\sum\limits_{i\in \cU}b_i\nu(g_i)=\sum\limits_{i\in\cU} b_id_i$.  Also, since $\{I_i\}_{i\ge 1}$ is a graded family, 
\[
g=\prod_{i\in\cU} g_i^{b_i}\in \prod_{i \in \cU} I_i^{b_i}\subset I_{\sum\limits_{i\in \cU} ib_i}=I_s.
\]
It follows that $\nu(I_s)\le \nu(g)= \sum\limits_{i\in\cU} b_id_i$.  Since $\{b_i:i\in\cU\}\subseteq \ZZ_{\ge 0}$ was arbitrary subject to $\sum\limits_{i\in\cU}ib_i=s$, $\nu(I_s)\le L$.

Now suppose that $f\in I_s$, or equivalently $fT^s\in \cR(\mathcal{F})$.  Since $\mathcal{G}$ generates $\cR(\mathcal{F})$ as an $S$-algebra, $fT^s$ can be written as a polynomial in the elements of $\mathcal{G}$ with coefficients in $S$.  That is, 
\begin{equation}\label{eq:polyexp}
fT^s=\sum\limits_{\alpha=(a_1,\ldots,a_k)\in\NN^k} c_\alpha (f_1T^{n_1})^{a_1}\cdots (f_kT^{n_k})^{a_k},
\end{equation}
for some coefficients $c_\alpha\in S$, where only finitely many of $c_\alpha$'s are non-zero.  In fact, since $\mathcal{G}$ consists of homogeneous generators for the graded algebra $\cR(\mathcal{F})$, we may assume that $\sum\limits_{i=1}^k n_ia_i=s$ for each $\alpha=(a_1,\ldots,a_k)$ so that $c_\alpha\neq 0$ in \Cref{eq:polyexp}.  That is, we may assume
\begin{equation}\label{eq:polyexp2}
f=\sum\limits_{\alpha=(a_1,\ldots,a_k)\in\NN^k} c_\alpha f_1^{a_1}\cdots f_k^{a_k},
\end{equation}
where $\sum\limits_{i=1}^k a_in_i=s$ for each non-zero $c_\alpha$.  Since $\nu$ is a valuation, 
\begin{equation}\label{eq:nuineq}
\nu(f)\ge\min\limits_{c_\alpha\neq 0}\{ \nu(c_\alpha f_1^{a_1}\cdots f_k^{a_k})\}=\min\limits_{c_\alpha\neq 0}\left\lbrace\nu(c_\alpha)+\sum\limits_{i=1}^k a_i\nu(f_i)\right\rbrace\ge \min\limits_{c_\alpha\neq 0}\left\lbrace\sum\limits_{i=1}^k a_i\nu(f_i)\right\rbrace\ge \min\limits_{c_\alpha\neq 0}\left\lbrace\sum\limits_{i=1}^k a_id_{n_i}\right\rbrace,
\end{equation}
where the penultimate inequality follows since $\nu$ is an $S$-valuation and the final inequality follows by definition of the function $d:\cU\to\NN$.

Now, for a fixed $\alpha=(a_1,\ldots,a_k)\in\NN^k$ so that $c_\alpha\neq 0$ and each $i\in\cU$, let $b_i=\sum\limits_{n_j=i} a_j$.  Then the equality $\sum\limits_{i=1}^k a_in_i=s$ for every term in \Cref{eq:polyexp2} can be re-written as $\sum\limits_{i\in\cU} ib_i=s$.  Furthermore, the sum $\sum\limits_{i=1}^k a_id_{n_i}$ appearing in the final inequality of \Cref{eq:nuineq} can be re-written as $\sum\limits_{i\in\cU} b_id_i$.  Therefore, there is some choice of non-negative integers $\{b_i:i\in\cU\}$ so that $\sum\limits_{i\in\cU}ib_i=s$ and $\nu(f)\ge \sum\limits_{i\in\cU}b_id_i$.  It follows that $L\le \nu(I_s)$, so we are done.
\end{proof}

The Noetherian Rees algebra relevant to our work is the \textit{symbolic Rees algebra} of a monomial ideal. We utilize \Cref{thm:ReesAlgebraFiltration} in the context of the symbolic Rees algebra derived from the Stanley-Reisner ideal of a matroid in the following subsection. 

\begin{Definition}\label{def:symReesAlgebras}
If $I$ is an ideal in $S$, the symbolic Rees algebra of $I$ is the subalgebra of $S[T]$ (where $T$ is an indeterminate) defined as
\[
\cR_s(I):=\bigoplus_{i\ge 0} I^{(i)}T^i.
\]
\end{Definition}

In general, it is very difficult to determine whether $\cR_s(I)$ is Noetherian, but for monomial ideals, it is known to be Noetherian and coincides with an object called the \textit{vertex cover algebra}: by~\cite{HHT07}, $\cR_s(I)$ is generated as an $S$-algebra by finitely many monomials of the form $x^{{\bf a}_i}T^{s_i}$ (i.e., $x^{{\bf a}_i}\in I^{(s_i)}$).

\subsection{Generators for the symbolic Rees algebra of the Stanley-Reisner ideal of a matroid}

We now turn to the symbolic Rees algebra of the Stanley-Reisner ideal of a matroid.  To convey a complete picture, we start with two additional operations on the matroidal side. 
 Given a matroid $M=(E,\mathcal{B})$ with $\rk(M)=k$ and an integer $0 \le r\le k $, the \textit{truncation of $M$ by rank $r$}, which we write as $\cT^r(M)$, is the matroid of rank $k-r$ whose independent sets are those independent subsets $A\subseteq E$ of $M$ so that $|A|\le k-r$. 
 $\cT^r(M)$ can also be characterized by the rank function $\rk_{\cT^r(M)}(A)=\min\{\rk_M(A),k-r\}$.

For an integer $0 \le r \le \rk(M^*)$, the \textit{elongation of $M$ by rank $r$}, which we write as $\cE^r(M)$, is the matroid of rank $k+r$ on $E$ whose independent sets are those subsets $A\subseteq E$ so that $n_M(A)=|A|-\rk_M(A)\le r$.

 We fix a setting and notation that will be used throughout this section. To make the notation more manageable and suggest the connection to symbolic powers, we define $M^{(r)}:=\cE^{r-1}(M)$.
 
\begin{SettingNotation}\label{set: Notation for section 7}  Let $M=(E,\cB)$ be a rank $k$ matroid  on a ground set $E$ of size $n$, and let \( S=\LL[x_e~:~e\in E] \) be a polynomial ring over a field \( \LL \). Let  $\Delta=\Delta(M)$ be the independent complex associated with $M$. For each $ r \in [n-k]$, \begin{enumerate} 
    \item Let $M^{(r)}=\cE^{r-1}(M)$ be the elongation of $M$ by rank $r-1$. 
    \item     Write $\Delta^{(r)}$ for the independence complex $\Delta(M^{(r)})$ and $I_{\Delta^{(r)}}$ for the Stanley-Reisner ideal of $\Delta^{(r)}$ in $S$.  Note that when $r =1$, $\Delta^{(1)}=\Delta=\Delta(M)$.
    \end{enumerate}
\end{SettingNotation}

Elongation and truncation are related to one another via matroid duality.  The following proposition can be found in~\cite[Observation~1]{LMPS18} (see~\cite{Welsh76} for a standard reference).  Note that our definitions of truncation by rank $r$ and elongation by rank $r$, respectively, are the same as the definitions of $(k-r)$-truncation and $(k+r)$-elongation in~\cite{LMPS18}.

\begin{Proposition}\label{prop:truncationdualelongation}
Let $M$ and $r$ be as in \Cref{set: Notation for section 7}. Then $\cE^r(M)=\cT^r(M^*)^*$.
\end{Proposition}

In the following lemma, we record equivalent descriptions of circuits and dependent sets of $M^{(r)}$.

\begin{Lemma}\label{lem:ElongationBasesAndCircuits}
Let $M$, $r$ and $M^{(r)}$ be as in \Cref{set: Notation for section 7}.  Then we have:
\begin{enumerate} 
\item \label{lem:ElongationBasesAndCircuitsa} The circuits of $M^{(r)}$ are complements of the flats of $M^*$ of rank $\rk(M^*)-r$.
\item \label{lem:ElongationBasesAndCircuitsb} The dependent sets of $M^{(r)}$ are those subsets $A\subseteq E$ so that $|A\cap B|\ge r$ for all $B\in\cB(M^*)$.
\end{enumerate}
\end{Lemma}

\begin{proof} 
For \eqref{lem:ElongationBasesAndCircuitsa}, the circuits of $M^{(r)}=\cE^{r-1}(M)$ are the circuits of $\cT^{r-1}(M^*)^*$ by \Cref{prop:truncationdualelongation}.  In turn, the circuits of $\cT^{r-1}(M^*)^*$ are the same as the complements of the flats of $\cT^{r-1}(M^*)$ of rank $\rk(\cT^{r-1}(M^*))-1$.  By the definition of truncation, the flats $\cT^{r-1}(M^*)$ of rank $\rk(\cT^{r-1}(M^*))-1$ are precisely the flats of $M^*$ of rank $\rk(M^*)-r$. Thus \eqref{lem:ElongationBasesAndCircuitsa} follows.

For \eqref{lem:ElongationBasesAndCircuitsb}, the independent sets of $M^{(r)}=\cE^{r-1}(M)$ are by definition those sets $U$ satisfying that $|U|-\rk_M(U)\le r-1$.  Thus the dependent sets of $M^{(r)}$ are those subsets $A\subset E$ that satisfy $|A|-\rk_M(A)\ge r$.  Observe that $|A\cap B|\ge r$ for all $B\in \cB(M^*)$ if and only if $|A\cap (E\setminus B')|\ge r$ for every $B'\in \cB(M)$ if and only if $|A|=|A\cap B'|+|A\cap(E\setminus B')|\ge |A\cap B'|+r$ for every $B'\in\cB(M)$ if and only if $|A|-|A\cap B'|\ge r$ for every $B'\in\cB(M)$.  But $|A\cap B'|\le \rk_M(A)$.  Thus $|A|-|A\cap B'|\ge r$ for every $B'\in\cB(M)$ if and only if $|A|-\rk_M(A)\ge r$; that is $A$ is a dependent set of $M^{(r)}$.
\end{proof}

In the following lemma, we prove that for $1 \le r \le \rk(M^*)$, the ideal generated by squarefree monomials in $I_{\Delta(M)}^{(r)}$ is the Stanley-Reisner ideal of $\Delta^{(r)}$.

\begin{Lemma}\label{lem:squarefreeMonomialsInSymbolicPowers}
Let $M$ and $r$ be as in \Cref{set: Notation for section 7}. For each  $U\subseteq  E$, the monomial $x^U\in I^{(r)}_{\Delta}$ if and only if $U$ is a dependent set of $M^{(r)}$.  In particular, $I_{\Delta^{(r)}}$ is minimally generated by the squarefree monomials $$\{x^C~:~ C\in \circuits(M^{(r)})\}=\{x^{E\setminus F}~:~ F\in \cL_{\rk(M^*)-r}(M^*)\}.$$
\end{Lemma}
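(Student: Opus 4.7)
The plan is to obtain the equivalence directly from two translations: the symbolic power condition in (\ref{eq:PDsymbolicpowersSR}) and the dependent-set description of the elongation from Lemma~\ref{lem:ElongationBasesAndCircuits}. The ``in particular'' statement then follows by identifying minimal non-faces of the independence complex of $M^{(r)}$ with circuits of $M^{(r)}$ and invoking Lemma~\ref{lem:ElongationBasesAndCircuits} again.

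First I would specialize (\ref{eq:PDsymbolicpowersSR}) to a squarefree monomial $x^U$: in this case the exponent vector ${\bf a}$ is the indicator vector $\mathbf{1}_U$, so $\sum_{e\in B} a_e = |U\cap B|$ for each basis $B$ of $M^*$. Hence the criterion $x^U\in I^{(r)}_{\Delta}$ becomes $|U\cap B|\ge r$ for every $B\in\cB(M^*)$. Comparing this with Lemma~\ref{lem:ElongationBasesAndCircuits}(c), which says that the dependent sets of $M^{(r)}=\cE^{r-1}(M)$ are precisely those $A\subseteq E$ satisfying $|A\cap B|\ge r$ for every $B\in\cB(M^*)$, yields the stated equivalence.

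For the second assertion, I would recall that the Stanley–Reisner ideal $I_{\Delta^{(r)}}$ is minimally generated by the squarefree monomials corresponding to the minimal non-faces of $\Delta^{(r)}=\Delta(M^{(r)})$, and these minimal non-faces are exactly the minimal dependent sets of $M^{(r)}$, that is, $\circuits(M^{(r)})$. The equality $\{x^C : C\in\circuits(M^{(r)})\}=\{x^{E\setminus F} : F\in\cL_{\rk(M^*)-r}(M^*)\}$ is then immediate from Lemma~\ref{lem:ElongationBasesAndCircuits}(a), which identifies the circuits of $M^{(r)}$ as the complements (in $E$) of the flats of $M^*$ of rank $\rk(M^*)-r$.

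There is no substantial obstacle here: the proof amounts to combining the primary decomposition description of $I^{(r)}_{\Delta}$ (applied to squarefree exponents) with the matroid-theoretic descriptions of $\cE^{r-1}(M)$ already established in Section~\ref{sec:truncandelong}. The only place where one should be careful is keeping track of the shift in indexing between $M^{(r)}$ and $\cE^{r-1}(M)$, and of the duality swap between bases of $M^*$ and flats of $M^*$ that appears in parts (a) and (c) of Lemma~\ref{lem:ElongationBasesAndCircuits}.
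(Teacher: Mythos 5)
Your proof is correct and follows essentially the same route as the paper's: both translate the symbolic-power membership via~\eqref{eq:PDsymbolicpowersSR} into the condition $|U\cap B|\ge r$ for all $B\in\cB(M^*)$, match it against Lemma~\ref{lem:ElongationBasesAndCircuits}(c), and then obtain the generating set and flat description from standard Stanley--Reisner theory together with Lemma~\ref{lem:ElongationBasesAndCircuits}(a). No gaps; the only difference is cosmetic in how the minimal-generation step is phrased.
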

\begin{proof}
Let $U \subseteq E$. From \Cref{lem:ElongationBasesAndCircuits}~\eqref{lem:ElongationBasesAndCircuitsb}, $U$ is a dependent set of $M^{(r)}$ if and only if $|U\cap B|\ge r$ for every $B\in\cB(M^*)$ if and only if $x^U\in P_B^r$ for every $B\in\cB(M^*)$ if and only if $x^U\in I_{\Delta}^{(r)}$, see \Cref{eq:PDsymbolicpowersSR}.
Recall that $P_B:=\langle x_u\, :\, u\in B \rangle$. 

If $U$ is not a minimal dependent set of $M^{(r)}$, then $U\supseteq  C$ for some circuit $C\in \circuits(M^{(r)})$.  Thus  $\{x^C~:~ C\in \circuits(M^{(r)})\}$ is the minimal generating set for $I_{\Delta^{(r)}}$.  The final equality follows immediately from \Cref{lem:ElongationBasesAndCircuits}~\eqref{lem:ElongationBasesAndCircuitsa}.
\end{proof}

The following theorem describes the structure of the symbolic Rees algebra of the Stanley-Reisner ideal of a matroid.
Here we denote the \textit{corank} of a flat $F\in\cL(M)$ by $\crk_{M}(F):=\rk(M)-\rk_M(F)$.

\begin{Theorem}
\label{thm:GeneratorsOfSymbolicReesAlgebraOfMatroid}
Let $M$ and $\Delta$  be as in \Cref{set: Notation for section 7}.
Then, the symbolic Rees algebra $\cR_s(I_{\Delta})$ is generated as an $S$-algebra by the set of monomials $$\{x^CT^i~:~C\in \circuits(M^{(i)}),~ i \in [n-k]\}.$$  Equivalently, $\cR_s(I_{\Delta})$ is generated as an $S$-algebra by $$\{x^{E\setminus F}T^{\crk_{M^*}(F)}~:~F\in \cL(M^*) \setminus \{E\}\}.$$
\end{Theorem}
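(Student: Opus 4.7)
\emph{First, I would verify} the equivalence of the two descriptions of the generating set directly from Lemma~\ref{lem:ElongationBasesAndCircuits}(a): the circuits of $M^{(i)}$ are the complements of flats of $M^*$ of corank $i$, and as $i$ varies over $\{1,\ldots,n-k\}$ these flats range over $\cL(M^*)\setminus\{E\}$. The containment of the claimed generating set in $\cR_s(I_\Delta)$ then follows at once from Lemma~\ref{lem:squarefreeMonomialsInSymbolicPowers}, since each $x^C$ with $C\in\circuits(M^{(i)})$ lies in $I_\Delta^{(i)}$, so $x^CT^i\in \cR_s(I_\Delta)$.

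\emph{The hard direction is to show} that every monomial $x^{\bf a}T^s\in\cR_s(I_\Delta)$ with $s\ge 1$ lies in the $S$-subalgebra generated by the claimed set. My plan here is an explicit ``level-set'' decomposition. Set $U_t:=\{e\in E:a_e\ge t\}$ for $t\ge 1$; these form a decreasing chain eventually becoming empty, and $x^{\bf a}=\prod_{t\ge 1}x^{U_t}$.

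\emph{The central step}, which I expect to be the main obstacle, is establishing the matroid identity
\[
\min_{B\in\cB(M^*)}\sum_{e\in B}a_e \;=\; \sum_{t\ge 1}n_M(U_t), \qquad n_M(U):=|U|-\rk_M(U),
\]
from which $x^{\bf a}\in I_\Delta^{(s)}$ if and only if $\sum_{t\ge 1}n_M(U_t)\ge s$. I would deduce this via matroid duality (bases of $M^*$ are complements of bases of $M$) to rewrite the left-hand side as $|{\bf a}|-\max_{B'\in\cB(M)}\sum_{e\in B'}a_e$, then invoke Edmonds' greedy/polymatroid formula $\max_{I\text{ indep in }M}\sum_{e\in I}a_e=\sum_{t\ge 1}\rk_M(U_t)$, and combine with $|{\bf a}|=\sum_{t\ge 1}|U_t|$.

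\emph{With that identity in hand}, the decomposition becomes automatic. Pick non-negative integers $(s_t)_{t\ge 1}$ with $s_t\le n_M(U_t)$ and $\sum_t s_t=s$ (feasible since $\sum_t n_M(U_t)\ge s$); monotonicity of $n_M$ forces $s_t\le n_M(E)=n-k$. For each $t$ with $s_t\ge 1$, the inequality $|U_t|-\rk_M(U_t)\ge s_t$ shows via Lemma~\ref{lem:ElongationBasesAndCircuits} that $U_t$ is dependent in $M^{(s_t)}$, so it contains a circuit $C_t\in\circuits(M^{(s_t)})$. The factorization
\[
x^{\bf a}T^s \;=\; \prod_{t\ge 1}x^{U_t}T^{s_t} \;=\; \Bigl(\prod_{t:\,s_t\ge 1}x^{U_t\setminus C_t}\cdot \bigl(x^{C_t}T^{s_t}\bigr)\Bigr)\cdot\prod_{t:\,s_t=0}x^{U_t}
\]
then exhibits $x^{\bf a}T^s$ as an $S$-algebra product of generators of the claimed form, where each $x^{C_t}T^{s_t}$ belongs to the generating set and the remaining factors are monomials in $S$.
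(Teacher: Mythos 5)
Your proof is correct, and it takes a genuinely different route from the paper's. The paper (Appendix~\ref{app}) proceeds by induction on the largest exponent $A$ in a \emph{minimal} generator $x^{\bf a}$ of $I_\Delta^{(s)}$, using Brualdi's bijective basis exchange property (Proposition~\ref{prop:bijectivebasisexchange}) to peel off the single top level set $U=\{e:a_e=A\}$ and show $x^{\bf a}=x^U m'$ factors through $I_\Delta^{(t)}\cdot I_\Delta^{(s-t)}$ with $t=\min_{B\in\cB(M^*)}|B\cap U|\ge 1$ (Proposition~\ref{prop:FactoringSymbolicMonomials}). You instead decompose \emph{all} level sets $U_t=\{e:a_e\ge t\}$ simultaneously and invoke Edmonds' greedy/polymatroid formula to get the closed-form identity $\min_{B\in\cB(M^*)}\sum_{e\in B}a_e=\sum_{t\ge 1}n_M(U_t)$, after which the allocation of the exponent budget $s$ across levels is automatic. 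Your version avoids the inductive peeling and, usefully, does not need to restrict to minimal generators (the paper must establish $t\ge 1$, which relies on minimality); it also yields the transparent membership criterion $x^{\bf a}\in I_\Delta^{(s)}\iff\sum_{t\ge 1}n_M(U_t)\ge s$ as a byproduct, which is a nice statement in its own right. The tradeoff is that you rely on Edmonds' maximum-weight-basis theorem where the paper relies on Brualdi's exchange lemma; both are standard, but the paper's argument is a bit closer to raw basis-exchange first principles. One small point worth making explicit in a write-up: for $t$ with $s_t\ge 1$ you have $1\le s_t\le n-k$, so $M^{(s_t)}$ is among the elongations under consideration (cf.\ Remark~\ref{rem: elongation rank +1}) and indeed has circuits, so $C_t$ exists; your monotonicity remark covers this but it deserves a sentence.
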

\begin{proof}
To streamline our presentation, we provide our proof in \Cref{app}.  This is equivalent to a result proved independently by Mantero and Nguyen, see \cite[Theorem~3.7]{ManNgu}.
\end{proof}

As a consequence of \Cref{thm:GeneratorsOfSymbolicReesAlgebraOfMatroid}, we see that $d_r(M)$ -- the $r$-th generalized Hamming weight of $M$ -- is the minimum size of a circuit of the elongation $M^{(r)}=\cE^{r-1}(M)$.  Indeed, we see from the definition of $\cE^{r-1}(M)$ that $A\subseteq  E$ is independent in $\cE^{r-1}(M)$ if and only if $|A|-\rk_{M}(A)\le r-1$.  Consequently, a dependent set $U$ of $\cE^{r-1}(M)$ must satisfy $|U|-\rk_{M}(U)\ge r$.  Thus, by \Cref{cor:equivalentInterpretationsGeneralizedHammingWeights}, we have: \begin{align}\label{eq:flat-matroid-elong}
  d_r(M)=d_1(M^{(r)})=d_1(\cE^{r-1}(M))=n-f_{n-k-r}^{\max}(M^*) \text{ for }  r \in [n-k]  
\end{align}
and 
\begin{align}\label{eq:flat-dual-elong}
  d_r(M^*)=d_1(\cE^{r-1}(M^*))=n-f_{k-r}^{\max}(M) \text{ for }  r \in [k].  
\end{align}

The next result is the culmination of our work in \Cref{sec:subadd-finite-seq} and \Cref{sec:noetherian-rees-algebra}.  Notably, we can treat a matroid and its dual simultaneously when the sequence of generalized Hamming weights forms a \rsubad{} sequence.  In \Cref{sec:pavingMatroids}, we show that this hypothesis on the generalized Hamming weights is satisfied for several interesting classes of matroids.

\begin{Theorem}\label{thm:SR}
    Let $M$, $\Delta$, $n$, and $k$ be as in  \Cref{set: Notation for section 7}.  Let $\{d_i(M)\}_{i=1}^{n-k}$ be the generalized Hamming weights of $M$.  Then we have the following: 
  \begin{enumerate} 
   \item  \label{thm:SRa} For $ i \in [n-k],$ $d_i(M)=\alpha(I_{\Delta^{(i)}})\ge \alpha(I_{\Delta}^{(i)})$. 
    \item \label{thm:SRb} For $ i \in [n-k]$,
$\alpha(I^{(i)}_{\Delta})=d_i(M)$
 if and only if $d_i(M)$ is a subadditive term of the sequence $\{d_i(M)\}_{i=1}^{n-k}$. 
 \item \label{thm:SRc} The Waldschmidt constant of $I_{\Delta(M)}$ is given by 
 \[
\widehat{\alpha}(I_{\Delta})=\min\limits_{i \in [n-k]}\left\lbrace \frac{d_i(M)}{i} \right\rbrace=\min\left\lbrace \frac{n-f_j^{\max}(M^*)}{n-k-j}~:~ 0 \le j \le n-k-1 \right\rbrace.
\] 
 \item \label{thm:SRd} The initial degree of $s$-th symbolic power of $I_\Delta$ is given by 
 \[
\alpha(I_{\Delta}^{(s)})=\min\left\{\sum\limits_{i=1}^{n-k} b_id_i(M)~:~ b_i \in \ZZ_{\ge 0} \text{ for } i \in [n-k] \text{ and } \sum\limits_{i=1}^{n-k} ib_i=s \right\}.
\] 
We may further assume that $b_i>0$ only when $d_i(M)$ is a strictly subadditive term of $\{d_i(M)\}_{i=1}^{n-k}$. 
\item \label{thm:SRe} The following are equivalent: \begin{itemize}
    \item[-] $\{d_i(M)\}_{i=1}^{n-k}$ is a \rsubad{} sequence.
    \item[-] For any $s=(n-k)q+r$ with $r \in [n-k]$ and $q \in \ZZ_{\ge 0}$,  $$\alpha(I_{\Delta}^{(s)})=qd_{n-k}(M)+d_r(M).$$
    \item[-] For any $s=kq+r$ with $r \in [k]$ and $q \in \ZZ_{\ge 0}$,  $$\alpha(I_{\Delta(M^*)}^{(s)})=qd_{k}(M^*)+d_r(M^*).$$
\end{itemize} Moreover, in this case,
\[
\widehat{\alpha}(I_{\Delta})=\frac{d_{n-k}(M)}{n-k}=\frac{n}{n-k},~\widehat{\alpha}(I_{\Delta(M^*)})=\frac{d_{k}(M^*)}{k}=\frac{n}{k},~\mbox{and } \frac{1}{\widehat{\alpha}(I_{\Delta})}+\frac{1}{\widehat{\alpha}(I_{\Delta(M^*)})}=1.
\]
\end{enumerate}
\end{Theorem}

\begin{proof}
The fact that $\alpha(I_{\Delta^{(i)}})=d_i(M)$ follows from \Cref{lem:squarefreeMonomialsInSymbolicPowers} and \Cref{eq:flat-matroid-elong}.  We have $d_i(M)\ge \alpha(I^{(i)}_{\Delta})$ from \Cref{gHW-lb}, establishing part \eqref{thm:SRa}. By \Cref{thm:GeneratorsOfSymbolicReesAlgebraOfMatroid}, we know that the symbolic Rees algebra $\cR_s(I_{\Delta})$ is generated as an $S$-algebra by the set of monomials $\{x^CT^i~:~C\in \circuits(M^{(i)}),~ i \in [n-k]\}.$ Take $\cU =[n-k]$ and $d:\cU \to [n]$ with $d_i=\min\limits_{C \in \circuits(M^{(i)})} \deg(x^C).$ By \Cref{lem:squarefreeMonomialsInSymbolicPowers} and part \eqref{thm:SRa}, we know that $d_i= \alpha(I_{\Delta^{(i)}}) = d_i(M)$.  Now, apply  \Cref{thm:ReesAlgebraFiltration} with   $\{x^CT^i~:~C\in \circuits(M^{(i)}),~ i \in [n-k]\}$ as $S$-algebra generators of the symbolic Rees algebra $\cR_s(I_{\Delta})$ and the initial degree $S$-valuation $\alpha:~S\setminus\{0\} \to \ZZ$ defined by $\alpha(f):=\min\{i~:~f\in\bigoplus_{k\ge i} S_k\}$, we get $\alpha_s(d)=\alpha(I_{\Delta}^{(s)})$ for all $s \in \NN$.  Parts \eqref{thm:SRb}-\eqref{thm:SRd} follow in the same way from \Cref{thm:ReesAlgebraFiltration}.  The first equality in part \eqref{thm:SRe} immediately follows from \Cref{cor:sun&cosub} and the second equivalence follows from the fact that the Wei dual of $d$ is $d^*=\{d_i(M^*)\}_{i=1}^k$ using \Cref{lem:WeiDualityMatroids}, and by switching the role of $M$ to $M^*$, we know that $\{\alpha(I_{\Delta(M^*)}^{(s)})\}_{s \in \NN}$ is the initial degree sequence of $d^*$.  
\end{proof}

\begin{Example}[Complete intersections]\label{ex:completeIntersections}
Suppose $I=\langle m_1,\ldots,m_c\rangle$ is a complete intersection squarefree monomial ideal.  Put $a_i=\deg(m_i)$ for $i \in [c]$ and assume that $a_1\le \cdots \le a_c$.  It is well known that $I^{(s)}=I^s$ for complete intersections, so $\alpha(I^{(s)})=sa_1$ and $\widehat{\alpha}(I)=a_1$.  We now explain how this also follows from \Cref{thm:SR}.  Note that $I$ is the Stanley-Reisner ideal of the graphic matroid $M$ of a union of $c$ disjoint cycles, with a cycle of length $a_i$ for each monomial $m_i$.  The rank of $M$ is $\sum_{i=1}^c a_i-c$. The generalized Hamming weights are $d_r(M)=\sum_{j=1}^r a_i$ for $i\in [r]$.  Observe that $d_r(M)$ is not a strictly subadditive term for any $2\le r\le c$.  So $d_2,\ldots,d_c$ may be disregarded in~\Cref{thm:SR}~\eqref{thm:SRd}, recovering that $\alpha(I^{(s)})=sa_1$ and $\widehat{\alpha}(I)=a_1$.
\end{Example}

We next illustrate \Cref{thm:SR} for an infinite family of self-dual matroids that appears in the appendix of `interesting matroids' in~\cite{Oxley2011}.

\begin{Example}[Theta matroids]\label{ex:thetaMatroids} Let $n \ge 2$ be a positive integer, and let $U=\{u_1,\ldots,u_n\}$ and $V=\{v_1,\ldots,v_n\}$ be disjoint ordered sets. Set $E=U\cup V.$ The {\it $n$-theta matroid}, denoted by $\Theta_n$, is a rank $n$ self-dual matroid on the ground set $E$ with set of basis as follows: $$\cB(\Theta_n)=\left\{B \subset E~:~ |B|=n, \, |B \cap U| \le 2 \text{ and } B \neq (V\setminus \{v_i\} ) \cup \{u_i\} \text{ for each } i \in [n]\right\}.$$ We direct the reader to \cite[page 664]{Oxley2011} for additional properties of Theta matroids. We now compute the generalized Hamming weights of $\Theta_n$ using \Cref{eq:flat-matroid-elong}.
 It follows from the description of the bases that $\rk_{\Theta_n}(U)=2$. Additionally, it follows from \cite[page 664]{Oxley2011} that $U$ is a modular flat, that is, $\rk_{\Theta_n}(F)+\rk_{\Theta_n}(U)=\rk_{\Theta_n}(U\cup F) +\rk_{\Theta_n}(U \cap F)$ for every flat $F$ of $\Theta_n$.  Using the description of the bases and the fact that $U$ is a modular flat of rank $2$, one can get that $f_j^{\max}(\Theta_n)=n+j-2$ for $2 \le j \le n-1$ (take the flat $U\cup V'$ with $V'\subset V, |V'|=j-2$), $f_0^{\max}(\Theta_n)=0$, $f_1^{\max}(\Theta_n)=1$ and $f_n^{\max}(\Theta_n)=2n$.   
 
Next, using \Cref{eq:flat-matroid-elong}, and the fact that $\Theta_n^* \simeq \Theta_n$, we obtain \[d_i(\Theta_n) =2n-f_{2n-n-i}^{\max}(\Theta_n)= 2n-f_{n-i}^{\max}(\Theta_n)=\begin{cases}
    i+2 & \text{ if } i \in [n-2], \\
    2n-1 & \text{ if } i=n-1, \\
    2n & \text{ if } i=n.
\end{cases}\]
Observe that $\{d_i(\Theta_n)\}_{i=1}^n$ is a subadditive sequence if and only if $n \le 4$. In this case, using \Cref{thm:SR}~\eqref{thm:SRe}, we get $\widehat{\alpha}(I_{\Delta(\Theta_n)})=2$, and for any $s=nq+r$ with $ r \in [n]$ and $q \in \ZZ_{\ge 0}$, $\alpha(I_{\Delta(\Theta_n)}^{(s)})=2nq+d_r(\Theta_n).$ 

Now, we assume that $n \ge 5.$ Observe that the sequence $\{d_i(\Theta_n)\}_{i=1}^{n-2}$ is a linear sequence with a non-zero constant, which establishes it as a strictly subadditive sequence. Furthermore, the terms $d_{n-1}(\Theta_n)$ and  $d_{n}(\Theta_n)$ are not strictly subadditive terms of the sequence $\{d_i(\Theta_n)\}_{i=1}^n$. Thus, it follows from \Cref{thm:SR}~(\ref{thm:SRb}, \ref{thm:SRc}) that  \[\alpha(I_{\Delta(\Theta_n)}^{(i)})=i+2=d_i(\Theta_n) \text{ for } i \in [n-2], \text{ and } \widehat{\alpha}(I_{\Delta(\Theta_n)})=\min\limits_{i \in [n]} \left\{ \frac{d_i(\Theta_n)}{i}\right\}= \frac{n}{n-2}.\]  Next, we compute the initial degree sequence of symbolic powers. It follows from \Cref{thm:SR}~\eqref{thm:SRd} and \Cref{def:init-degree-sequence} that $\alpha(I_{\Delta(\Theta_n)}^{(s)})= \alpha_s(d),$ where $d~:~[n-2] \to [n]$ defined as $d_i=d_i(\Theta_n).$  As we noticed above $d$ is a subadditive sequence. The Wei dual $d^*$, by \Cref{def:WeiDual}, given as $d_1^*=n-1$ and $d_2^*=n$, is also a subadditive sequence. Thus, by \Cref{lem:extended-subadd-alpha-formula}, for any $s =q(n-2)+r$ with $r \in [n-2]$ and $q \in \ZZ_{\ge 0}$, $\alpha(I_{\Delta(\Theta_n)}^{(s)})=\alpha_s(d)=qd_{n-2}+d_r=qn+r+2$.
\end{Example}

\Cref{ex:completeIntersections} and \Cref{ex:thetaMatroids} indicate that we can partially extend \Cref{thm:SR}\eqref{thm:SRe}.

\begin{Corollary}
Let $M$, $\Delta$, $n$, and $k$ be as in  \Cref{set: Notation for section 7} and let  $\{d_i(M)\}_{i=1}^{n-k}$  be the sequence of generalized Hamming weights of $M$. 
Suppose there exists $j<n-k$ so that the subsequence $\{d_i(M)\}_{i=1}^{j}$ is subadditive and cosubadditive and each term of $\{d_i(M)\}_{i=j+1}^{n-k}$ is not a strictly subadditive term of $\{d_i(M)\}_{i=1}^{n-k}$.  If $s=qj+r$ where $r\in [j]$ and $q\in\ZZ_{\ge 0}$, we have $\alpha(I_{\Delta}^{(s)})=qd_j(M)+d_r(M)$. In particular,  $\widehat{\alpha}(I_{\Delta})=\frac{d_{j}(M)}{j}$.
 \end{Corollary}
 \begin{proof}
 Observe that $\{\alpha(I^{(s)})\}_{s\ge 1}$ is the initial degree sequence of the function $d|_{[j]}$ (whose image is the sequence $\{d_1(M),\ldots,d_j(M)\}$ and codomain is $[d_j(M)]$) by \Cref{thm:SR}\eqref{thm:SRc}.  Now apply \Cref{lem:extended-subadd-alpha-formula} to $d|_{[j]}$.
 \end{proof}

\section{Matroids with subadditive and cosubadditive generalized Hamming weights}\label{sec:pavingMatroids}

For a given matroid $M$, \Cref{thm:SR} gives the most information concerning the initial degrees of $I^{(s)}_{\Delta(M)}$ and $I^{(s)}_{\Delta(M^*)}$ when the generalized Hamming weights of $M$ form a \rsubad{} sequence.  In this section, we will see that this happens with remarkable frequency.  In particular, we show that the generalized Hamming weights of sparse paving matroids and of perfect matroid designs form a \rsubad{} sequence.  We continue to assume that our matroids have no loops or coloops.

\subsection{Paving and sparse paving matroids}\label{gen-Paving} A matroid is called a \textit{paving} matroid if the size of every circuit is either equal to the rank of the matroid or is one greater than the rank of the matroid (e.g., projective plane matroid).  A matroid is called a \textit{sparse paving} matroid if it is paving and its dual is paving (e.g., uniform matroid, the V\'amos matroid).  It has been conjectured that the proportion of paving matroids on $n$ elements to connected matroids on $n$ elements tends to $1$ as $n$ tends to infinity~\cite[Conjecture~1.6]{MNDW11}.  We see in this subsection that the sequence of generalized Hamming weights of a paving matroid is subadditive and, in addition, a cosubadditive sequence if the matroid is sparse paving.

We first show that the properties of paving and sparse paving can be encoded using the generalized Hamming weights of $M$. Recall that the uniform matroid ${\mathrm U}_{n,k}$ is the rank $k$ matroid on the ground set $E$ of $n$ elements so that every subset of $E$ consisting of $k$ elements is a basis of ${\mathrm U}_{n,k}$ (see \Cref{Ex:MDS-uniform}).  It is straightforward to check that ${\mathrm U}^*_{n,k}={\mathrm U}_{n,n-k}$.

\begin{Proposition}\label{prop:PavingHamming}
Let $M =(E,\cB)$ be a matroid of rank $2\le k\le n-1$, where $n=|E|$.  Then
\begin{enumerate} 
\item \label{prop:PavingHamminga} $M$ is a uniform matroid if and only if $d_1(M)=k+1$.
\item \label{prop:PavingHammingb} $M$ is paving if and only if $d_1(M)\ge k$ if and only if $d_2(M^*)=n-k+2$.
\end{enumerate}    
Thus, for $k \le n-2$, $M$ is sparse paving if and only if $k\le d_1(M)$ and $d_2(M)=k+2$.
\end{Proposition}
\begin{proof}
If $M$ is the uniform matroid ${\mathrm U}_{n,k}$, then the smallest size of a circuit of $M$ is $d_1(M)=k+1$.  Conversely, suppose $d_1(M)=k+1$.  Then, the smallest size of a circuit of $M$ is $k+1$.  Since $\rk(M)=k$, this implies every subset of $E$ of size $k$ is independent in $M$, thus $M={\mathrm U}_{n,k}$.

$M$ is paving, which means every circuit of $M$ has cardinality equal to $k$ or $k+1$.  Since $M$ cannot have a circuit of size larger than $k+1$, this is equivalent to $d_1(M)\ge k$, which happens if and only if every subset of $E$ of size $k-1$ is independent in $M$.  This, in turn, is true if and only if the truncation $\cT^1(M)$ is the uniform matroid ${\mathrm U}_{n,k-1}$.  We have $\cT^1(M)=\cE^1(M^*)^*$ by \Cref{prop:truncationdualelongation}, and so $\cT^1(M)={\mathrm U}_{n,k-1}$ if and only if $\cE^1(M^*)={\mathrm U}_{n,n-k+1}$. Suppose $\cE^1(M^*)={\mathrm U}_{n,n-k+1}$, then by \Cref{eq:flat-dual-elong}, $d_2(M^*)=d_1(\mathcal{E}^1(M^*))=n-k+2$. Conversely, if $d_2(M^*)=n-k+2$, then by \Cref{eq:flat-dual-elong}, $d_1(\mathcal{E}^1(M^*))=n-k+2$, which implies using part $(\rm a)$ that $\cE^1(M^*)={\mathrm U}_{n,n-k+1}$. Hence, the assertion follows. 
\end{proof}

\begin{Remark}\label{rmk:paving}
If $M$ is paving, then $d_2(M^*)=n-k+2$. Therefore, from \Cref{lem:genMindistMatroid}~\eqref{lem:genMindistMatroidc}, it follows that $d_r(M^*)=n-k+r$ for $2\le r\le k$.  In particular, if $M$ is sparse paving and not uniform, then $d_1(M)=k$ and $d_r(M)=k+r$ for $2\le r\le n-k$.
\end{Remark}

The following result provides a characterization of paving matroids for which the generalized Hamming weights form a sequence that is both subadditive and cosubadditive. 
\begin{Theorem}\label{thm:Paving-indegree}
 Let $M=(E,\cB)$ be a paving matroid of rank $2 \le k \le n-1$, where  $n=|E|$.  Then,  
 \begin{enumerate}
     \item \label{thm:Paving-indegreea} $\{d_i(M)\}_{i=1}^{n-k}$ is a subadditive sequence. In particular, 
$\alpha(I_{\Delta(M)}^{(i)})=d_i(M)$ for $r \in [n-k]$.   
\item \label{thm:Paving-indegreeb} $\{d_i(M)\}_{i=1}^{n-k}$ is cosubadditive if and only if $d_1(M^*) \ge \frac{n-k+2}{2}$.
\item \label{thm:Paving-indegreec} Assume that $d_1(M^*) \ge \frac{n-k+2}{2}$. Then, for any $s=(n-k)q+r$ with $r \in [n-k]$ and $q \in \ZZ_{\ge 0}$, 
$\alpha(I_{\Delta(M)}^{(s)})=qd_{n-k}(M)+d_r(M)=qn+d_r(M)$. In particular, $\widehat{\alpha}(I_{\Delta(M)})=\frac{n}{n-k}$.
\item \label{thm:Paving-indegreed} Assume that $d_1(M^*) \ge \frac{n-k+2}{2}$. Then, for any $s=kq+r$ with $r\in [k]$ and $q \in \ZZ_{\ge 0}$, 
$\alpha(I_{\Delta(M^*)}^{(s)})=qd_{k}(M^*)+d_r(M^*)=qn+d_r(M^*)$. In particular, $\widehat{\alpha}(I_{\Delta(M^*)})=\frac{n}{k}$.
 \end{enumerate}
\end{Theorem}

\begin{proof}
 If $M$ is a uniform matroid, then it follows from \Cref{prop:PavingHamming}~\eqref{prop:PavingHamminga} that $d_1(M)=k+1$, and hence by \Cref{lem:genMindistMatroid}, $d_i(M)=k+i$ for each $i\in [n-k]$.  And, if $M$ is paving but not uniform,    since $M$ is loopless, by \Cref{prop:PavingHamming}~\eqref{prop:PavingHammingb}, we know that $d_1(M) = k$. Then, by \Cref{lem:genMindistMatroid}, there exists  $i_0\in [n-k]$ such that $d_i(M) =k+i-1$ for $i \le i_0$ and $d_i(M)=k+i$ for $i_0< i \le n-k$.  Observe that, in both cases, for all $i,j \in [n-k]$ with $i+j \in [n-k]$, $d_{i+j}(M) \le k+i+j \le d_i(M) +k+j-1 \le d_i(M)+d_j(M)$. This shows that $\{d_r(M)\}_{r=1}^{n-k}$ is a  subadditive sequence, and therefore, by \Cref{thm:SR}~\eqref{thm:SRb}, we conclude that $\alpha(I_{\Delta(M)}^{(r)})=d_r(M)$ for $r \in [n- k]$. This completes part $(\rm a)$.

     By \Cref{def:WeiDual} (and \Cref{lem:WeiDualityMatroids}), we know that $\{d_r(M)\}_{r=1}^{n-k}$ is cosubadditive if and only if $\{d_r(M^*)\}_{r=1}^k$ is subadditive. Since $M$ is paving, by \Cref{rmk:paving}, $d_r(M^*)=n-k+r$ for $2 \le r \le k$. Assume that $\{d_r(M^*)\}_{r=1}^k$ is subadditive. Therefore,  $n-k+2=d_2(M^*)\le 2d_1(M^*)$, which implies that $d_1(M^*) \ge \frac{n-k+2}{2}$. Conversely, we assume that $d_1(M^*) \ge \frac{n-k+2}{2}$. Clearly, $d_2(M^*)=n-k+2 \le 2d_1(M^*)$. Now, take any $i>1$ and $j \ge 1$ with $i+j \le k$. Then, $d_{i+j}(M^*)=n-k+i+j=d_i(M^*)+j \le d_i(M^*)+d_j(M^*)$. This proves that $\{d_r(M^*)\}_{r=1}^k$ is subadditive if and only if $d_1(M^*) \ge \frac{n-k+2}{2}$.

     Since $M$ and $M^*$ are loopless, we have $d_1(M)>1$ and $d_{n-k}(M)=n$.  By parts $(\rm a),(\rm b)$, we know that $\{d_r(M)\}_{r=1}^{n-k}$ is subadditive and cosubadditive. Thus, the proof of parts $(\rm c),(\rm d)$ immediately follows from \Cref{thm:SR}~\eqref{thm:SRe}. 
\end{proof}

\begin{Corollary}
\label{cor:sparse-paving}
Let $M=(E,\cB)$ be a sparse paving matroid of rank $2 \le k \le n-2$ where $n=|E|$.  Then, $\{d_i(M)\}_{i=1}^{n-k}$ is \rsubad{}. 
Moreover, in this case, for any $s=(n-k)q+r$ with $r\in [n-k]$ and $q \in \ZZ_{\ge 0}$,
\[
\alpha(I_{\Delta(M)}^{(s)})=nq+d_r(M)=
\begin{cases}
nq+k & r=1 \mbox{ and } M \mbox{ is not uniform }\\
nq+(k+r) & \mbox{otherwise}.
\end{cases}
\]
In particular, $$\widehat{\alpha}(I_{\Delta(M)})=\frac{n}{n-k}.$$
\end{Corollary}

 We close this subsection by discussing matroids arising from Steiner systems, and as a consequence of \Cref{cor:sparse-paving}, we recover \cite[Proposition~3.8]{BFGM21}. {\it{A Steiner system of type $S(t,k,n)$}}, where $t,k,n$ are positive integers satisfying $t< k< n$ (to avoid trivialities), is a collection of subsets $\mathfrak{B}$ of size $k$ (called \textit{blocks}) of a ground set $E$ of size $n$ so that \textit{every} subset of $E$ of size $t$ is contained in a \textit{unique} block.  In analogy with our notation for matroids, we let $\mathcal{S}=(E,\mathfrak{B})$ denote a Steiner system; we will specify the type $S(t,k,n)$ separately.  There is a matroid one can naturally associate with a Steiner system, as follows, see~\cite[Theorem~3.4]{BFGM21} for proof that the collection of sets in the following definition does, in fact, satisfy the basis exchange axiom of a matroid.

\begin{Definition}\label{def:matroidfromSteiner}
If $\mathcal{S}=(E,\mathfrak{B})$ is a Steiner system of type $S(t,k,n)$, then $M(\mathcal{S})=(E,\mathcal{B})$ is a matroid of rank $k$, where $\mathcal{B}$ consists of all subsets of $E$ of size $k$ which are not blocks of $\mathcal{S}$.
\end{Definition}

We now show that the matroid $M(\mathcal{S})=(E,\cB)$ corresponding to a Steiner system is a sparse paving matroid.  We believe this is known, but we include proof for completeness.

\begin{Proposition}\label{prop:steinerSystemsAreSparsePaving}
If $\mathcal{S}=(E,\mathfrak{B})$ is a Steiner system of type $S(t,k,n)$, then the matroid $M(\mathcal{S})=(E,\cB)$ is a sparse paving matroid.  
\end{Proposition}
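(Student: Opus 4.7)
The plan is to show separately that $M(\mathcal{S})$ is paving and that $M(\mathcal{S})^*$ is paving. Both arguments rest on the single combinatorial observation (coming from uniqueness of blocks in a Steiner system) that no two distinct blocks of $\mathcal{S}$ can share $t$ or more elements. In particular, since $k-1 \ge t$, two distinct blocks meet in at most $k-2$ elements, which will control how blocks sit inside small subsets of $E$.

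First, I would show that every subset $A \subseteq E$ with $|A| \le k-1$ is independent in $M(\mathcal{S})$, which forces every circuit of $M(\mathcal{S})$ to have size $k$ or $k+1$ (and hence forces $M(\mathcal{S})$ to be paving). If $|A| = k-1$, then $|A| \ge t$, so $A$ contains some $t$-subset; by the two-block remark above, $A$ is contained in at most one block of $\mathcal{S}$. On the other hand, $A$ extends to a $k$-subset of $E$ in $n-k+1 \ge 2$ ways. Hence at least one extension of $A$ to a $k$-subset is not a block, i.e. is a basis of $M(\mathcal{S})$, and so $A$ is independent. Every proper subset of such an $A$ is then also independent.

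Next, I would show that every $(n-k-1)$-subset of $E$ is independent in $M(\mathcal{S})^*$, which (by the same argument as in Proposition~\ref{prop:PavingHamming}) forces $M(\mathcal{S})^*$ to be paving. By duality, this is equivalent to showing that every $(k+1)$-subset $C \subseteq E$ contains a basis of $M(\mathcal{S})$, i.e.\ contains a $k$-subset that is not a block. Suppose for contradiction that two distinct $k$-subsets $B_1, B_2 \subseteq C$ were both blocks. Then
\[
|B_1 \cap B_2| \;\ge\; |B_1| + |B_2| - |C| \;=\; 2k - (k+1) \;=\; k-1 \;\ge\; t,
\]
so any $t$-subset of $B_1 \cap B_2$ would lie in two distinct blocks, contradicting the defining property of a Steiner system. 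Thus at most one of the $k+1$ $k$-subsets of $C$ is a block, leaving at least one basis inside $C$, as required.

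Combining the two paragraphs above gives that both $M(\mathcal{S})$ and $M(\mathcal{S})^*$ are paving, so $M(\mathcal{S})$ is sparse paving. No step is substantively hard; the only thing one has to be careful about is consistently using that $k-1 \ge t$ (to invoke uniqueness of blocks on the relevant intersections) and that $n \ge k+1$ (to guarantee that $(k-1)$-subsets admit more than one $k$-extension).
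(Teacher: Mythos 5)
Your proof is correct, and the underlying skeleton is the same as the paper's: both arguments establish that (i) every $(k-1)$-subset of $E$ is independent in $M(\mathcal{S})$, and (ii) every $(k+1)$-subset of $E$ contains a basis (equivalently, has rank $k$). The difference is in presentation. For (i), you use a counting argument: a $(k-1)$-set lies in at most one block (since two such blocks would intersect in $\ge k-1 \ge t$ elements), while it has $n-k+1 \ge 2$ extensions to $k$-subsets, so at least one extension is a basis. The paper instead gives an explicit case analysis depending on whether $U$ is contained in the unique block containing a chosen $t$-subset, which constructs a specific basis but is a bit longer. For (ii) your argument is identical to the paper's. Where the two really diverge is in the passage from (ii) to ``$M(\mathcal{S})^*$ is paving'': you use directly that $J$ is independent in $M^*$ if and only if $E\setminus J$ contains a basis of $M$, so (ii) says exactly that every $(n-k-1)$-subset is independent in $M^*$, giving paving. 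The paper takes a detour through flats: it argues that (i) and (ii) together force all rank-$(k-1)$ flats of $M(\mathcal{S})$ to have cardinality $k-1$ or $k$, and then invokes the fact that cocircuits are complements of corank-one flats. Your route avoids introducing flats at all, which makes the second half of the argument cleaner while proving the same thing.
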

\begin{proof}
We first prove that every subset $U\subset E$ of size $k-1$ is an independent set of $M(\mathcal{S})$.  Since $t\le k-1$, choose a subset $A\subset U$ of size $t$.  Then, since $\mathcal{S}$ is an $S(t,k,n)$ Steiner system, $A$ is contained in a unique block $T$ of $\mathcal{S}$.  If $U\subset T$, then $T\setminus U=\{i\}$ for some $i\in E$.  Pick any $j\in E$, with $j\neq i$ and  $j\notin U$. Such an element exists as long as $k<n$.  Let $B=U\cup\{j\}$.  Then, $B\neq T$ and $A\subset B$, so $B$ is not a block of $\mathcal{S}$.  Hence, since $|B|=k$, we must have $B\in\mathcal{B}$.  If $U\not\subset T$, then we can pick any $k$-element subset $B$ of $E$ containing $U$.  It contains $A$  but does not equal $T$, hence again $B\in\mathcal{B}$.  Hence, $U$ is an independent set.  It follows that all dependent sets of $M(\mathcal{S})$ have size at least $k$.  Since every set of size $k+1$ is dependent in $M$, the minimal dependent sets of $M(\mathcal{S})$ (i.e., the circuits) all have size $k$ or $k+1$.  Thus, $M(\mathcal{S})$ is paving.

Next, we prove that every subset $U\subset E$ of size $|U|=k+1$ has rank $k$.  That is, $U$ contains a basis of $M$.  Suppose not.  Then, every $k$-subset of $U$ is a block of $\mathcal{S}$.  The intersection $A_1\cap A_2$ of two distinct $k$-subsets of $U$ is a $(k-1)$-element subset.  Pick a $t$-element subset of $A_1\cap A_2$ (possible because $t<k$).  Then $A_1$ and $A_2$ are both blocks of $\mathcal{S}$ that contain this $t$-element subset, contradicting that a $t$-element subset of $E$ is contained in a \textit{unique} block of $\mathcal{S}$.  So $U$ contains a basis of $M(\mathcal{S})$ and hence has rank $k$.

Now we prove that the flats of $M(\mathcal{S})$ of rank $k-1$ all have cardinality $k-1$ or $k$.  We have seen that every $(k+1)$-subset of $E$ has rank $k$.  Thus, every flat of rank $k-1$ has cardinality at most $k$.  Moreover, we have seen that every $(k-1)$-subset is independent.  Thus, a subset of $E$ with rank $k-1$ has a size of either $k-1$ or $k$.  Hence, the flats of rank $k-1$ of $M(\mathcal{S})$ all have cardinality $k-1$ or $k$.  

It follows that the circuits of $M(\mathcal{S})^*$, which are the complements of rank $k-1$ flats (see \Cref{sec:prelims}) of $M(\mathcal{S})$, have cardinality $n-k$ or $n-(k-1)=n-k+1$.  Since $M(\mathcal{S})^*$ has rank $n-k$, this proves that $M(\mathcal{S})^*$ is paving.  Hence, $M(\mathcal{S})$ is sparse paving.
\end{proof}

To recover \cite[Proposition~3.8]{BFGM21}, we apply \Cref{cor:sparse-paving} to $I=I_{\Delta(M)}$, where $M$ is the dual of the matroid $M(S)$ associated to a Steiner system of type $S(t,k,n)$.

\begin{Corollary}$($\cite[Proposition~3.8]{BFGM21}$)$
Let $\mathcal{S}=(E, \mathfrak{B})$ be a Steiner system of type $S(t,k,n)$, $M=M(\mathcal{S})$ be the matroid associated to $\mathcal{S}$ as in \Cref{def:matroidfromSteiner}, and $I=I_{\Delta(M^*)}$ be the Stanley-Reisner ideal of $M^*$. Then
\begin{enumerate}  
    \item $\alpha(I)=d_1(M^*)=n-k$;
    \item $\alpha(I^{(s)})=d_{s}(M^*)=n-k+s$ for $2\le s \le k$;
    \item If $s=qk+r$ with $r\in[k]$ and $q \in \ZZ_{\ge 0}$, then
    \[
    \alpha(I_{\Delta(M^*)}^{(s)})=nq+d_r(M^*)=
\begin{cases}
nq+(n-k) & r=1 \\
nq+(n-k+r)& \mbox{otherwise}.
\end{cases}
    \]
\end{enumerate}
\end{Corollary}

\subsection{Perfect Matroid Designs}\label{gen-PMD}
A matroid of rank $k$ is called a {\em perfect matroid design} if every rank $i$ flat, for all $i\in[k]$, 
 has a uniform cardinality.  Standard examples of perfect matroid designs include uniform matroids, projective geometries, affine geometries, and the matroid structure derived from Steiner systems. It is important to distinguish that the matroids associated with Steiner systems discussed in Subsection~\ref{gen-Paving} differ from those classified as perfect matroid designs derived from Steiner systems. This differentiation will be elucidated further in this subsection. 

Our main result of this section is that the generalized Hamming weights of a perfect matroid design form a \rsubad{} sequence.

 \begin{Theorem}\label{thm:PMD-indegree}
     Let $M=(E,\cB)$ be a perfect matroid design of rank $k\in [n-1]$, where $n=|E|$. Then,
\begin{enumerate} 
    \item \label{thm:PMD-indegreea} $\{d_i(M)\}_{i=1}^{n-k}$ is subadditive and cosubadditive. 
    \item \label{thm:PMD-indegreeb} For any $s=(n-k)q+r$ with $r \in [n-k]$ and $q \in \ZZ_{\ge 0}$, 
$\alpha(I_{\Delta(M)}^{(s)})=qd_{n-k}(M)+d_r(M)=qn+d_r(M)$. In particular, $\widehat{\alpha}(I_{\Delta(M)})=\frac{n}{n-k}$.
\item \label{thm:PMD-indegreec} For any $s=kq+r$ with $r \in [k]$ and $q \in \ZZ_{\ge 0}$, 
$\alpha(I_{\Delta(M^*)}^{(s)})=qd_{k}(M^*)+d_r(M^*)=qn+d_r(M^*)$. In particular, $\widehat{\alpha}(I_{\Delta(M^*)})=\frac{n}{k}$.

\end{enumerate}
     
 \end{Theorem}

 \begin{proof}
     To establish  that $\{d_r(M)\}_{r=1}^{n-k}$ is both subadditive and cosubadditive, we employ \Cref{prop:flat-sub&cosub}. We claim that for all $i\in[k-1]$ we have $f_{i+1}^{\max}(M)-f_i^{\max}(M) \ge f_{i}^{\max}(M)-f_{i-1}^{\max}(M)$.  For any $i\le j\le \ell \in \{0,1,\ldots,k\}$ and given a rank $i$-flat $F$ and a rank $\ell$-flat $G$ with $F \subset G$, we define $t_M(i,j,\ell)$ as the number of rank $j$-flats which contain $F$ and are contained within $G$. It follows from \cite[Theorem 12.5.1]{Welsh76} that $t_M(i,j,\ell)$ is independent of the choice of $F$ and $G$, and also satisfies  $$t_M(i,i+1,j) =\dfrac{t_M(0,1,j)-t_M(0,1,i)}{t_M(0,1,i+1)-t_M(0,1,i)}, \text{ for } 0 \le i \le j \le k.$$ It is pertinent to note that for each $i\in [k]$, $t_M(0,1,i)=\dfrac{f_i^{\max}(M)}{f_1^{\max}(M)}.$ Now let  $i \in [k-1]$ and observe that \begin{align*}
         \dfrac{f_{i+1}^{\max}(M)-f_i^{\max}(M)}{f_{i}^{\max}(M)-f_{i-1}^{\max}(M)}&=\dfrac{f_{i+1}^{\max}(M)-f_{i-1}^{\max}(M)}{f_{i}^{\max}(M)-f_{i-1}^{\max}(M)}-1\\& =\dfrac{t_M(0,1,i+1)-t_M(0,1,i-1)}{t_M(0,1,i)-t_M(0,1,i-1)}-1\\&=t_M(i-1,i,i+1)-1\ge 1,
     \end{align*} where the last inequality follows from the fact that there are at least two flats of rank $i$ that contain a given flat $F$ of rank $i-1$ and are contained in a given flat $G$ of rank $i+1$ so that $F \subset G$. Specifically, if $F$ is a flat of rank $i-1$ and $G$ is a flat of rank $i+1$ so that $F \subset G$, then there exist two elements $x,y \in G \setminus F$ such that $F \cup\{x,y\}$ has rank $i+1$. Consequently, ${cl}(F\cup \{x\})$ and $cl(F \cup \{y\})$ form flats of rank $i$ that both contain $F$ and are contained within $G$. Thus, for each $i\in[k-1]$, we have  $f_{i+1}^{\max}(M)-f_i^{\max}(M) \ge f_{i}^{\max}(M)-f_{i-1}^{\max}(M) .$ Now, using \Cref{prop:flat-sub&cosub}, we conclude that $\{d_i(M)\}_{i=1}^{n-k}$ is both subadditive and cosubadditive. This completes the proof of part \eqref{thm:PMD-indegreea}. The proof of parts \eqref{thm:PMD-indegreeb} and \eqref{thm:PMD-indegreec} immediately follows from~\Cref{thm:SR}~\eqref{thm:SRe}.
 \end{proof}

We now consider the three aforementioned families of perfect matroid designs: projective geometries, affine geometries, and perfect matroid designs arising from Steiner systems. 

\begin{Example}[Projective geometries]\label{ex:projectiveGeometries}
Let $\KK=\mathbb{F}_q$ be a finite field of order $q$, where $q$ is a power of a prime.  Given an integer $m\ge 0$, there is a matroid of rank $m+1$, derived from the projective space $\mathbb{P}^{m}_{\KK}$, called the \textit{projective geometry} $PG(m,\KK)$.  See~\cite[Chapter~6.1]{Oxley2011} for a detailed discussion.  The ground set of the projective geometry $PG(m,\KK)$ is the set of all $(q^{m+1}-1)/(q-1)$ points of $\mathbb{P}^m_{\KK}$.  If $X\subset \mathbb{P}^m_{\KK}$ is a set of points, we can lift them to a set of vectors $\hat{X}\subset \KK^{m+1}$ (recall that $\mathbb{P}^m_{\KK}$ can be defined as the set of lines through the origin in $\KK^{m+1}$). The set $X\subset \mathbb{P}^m_{\KK}$ is regarded as independent in the matroid $PG(m,\KK)$ if and only if the direction vectors of the lines $\hat{X}$ are linearly independent.  For all $i\in[m+1]$ a flat of rank $i$ in $PG(m,\KK)$ corresponds to a  linear subspace of $\KK^{m+1}$ of dimension $i$, and thus is isomorphic to the projective geometry $PG(i-1,\KK)$.  It follows that a flat of rank $i$ in $PG(m,\KK)$ has size $(q^{i}-1)/(q-1)$, for all $i\in[m+1]$.  Thus, $PG(m,\KK)$ is a perfect matroid design.  By~\Cref{thm:PMD-indegree},
\[
\widehat{\alpha}(I_{\Delta(PG(m,\KK))})=\dfrac{q^{m+1}-1}{q^{m+1}-1-(m+1)(q-1)} \quad\mbox{and}\quad \widehat{\alpha}(I_{\Delta(PG(m,\KK)^*)})=\dfrac{q^{m+1}-1}{(m+1)(q-1)}.
\]
From \Cref{eq:flat-dual-elong}, we have $d_r(PG(m,\KK)^*)=\dfrac{q^{m-r+1}(q^r-1)}{q-1}$ for $1\le r\le m+1$.  Thus, if $s\in \NN$ satisfies $s=s'(m+1)+r$ for $r\in [m+1]$ and $s'\in\ZZ_{\ge 0}$, then \Cref{thm:PMD-indegree}~\eqref{thm:PMD-indegreec} yields
\[
\alpha(I_{\Delta(PG(m,\KK)^*)}^{(s)})=s'\dfrac{q^{m+1}-1}{q-1}+\dfrac{q^{m-r+1}(q^r-1)}{q-1}.
\]
Using \Cref{thm:PMD-indegree}~\eqref{thm:PMD-indegreeb}  (and Lemma~\ref{lem:WeiDualityMatroids}), the initial degree of $I^{(s)}_{\Delta(PG(m,\KK))}$ can also be determined in terms of the Wei dual of the sequence $\{d_r(PG(m,\KK)^*)\}_{r=1}^{m+1}$, which we now describe.

We know using \Cref{lem:WeiDualityMatroids} and \Cref{def:WeiDual} that $\{d_r(PG(m,\KK))\}_{r=1}^{\frac{q^{m+1}-1}{q-1}-m-1}$ is the gap function of $\{\overline{d}_r(PG(m,\KK)^*)\}_{r=1}^{m+1}$ in 
$[\frac{q^{m+1}-1}{q-1}]$. Notice that $$\left[\frac{q^{m+1}-1}{q-1}\right]=\bigsqcup\limits_{\ell =1}^m \left\{ \frac{q^{\ell}-1}{q-1}-\ell+1, \ldots, \frac{q^{\ell+1}-1}{q-1}-\ell-1\right\},$$ that is, for each $r$ between $1$ and $\frac{q^{m+1}-1}{q-1}-m-1$, there exists a unique $\ell \in [m]$ so that $\frac{q^{\ell}-1}{q-1}-\ell+1 \le r \le \frac{q^{\ell+1}-1}{q-1}-\ell -1$. 
It follows using \Cref{def:bar-function} that $\overline{d}_r(PG(m,\KK)^*)=\frac{q^{r-1}-1}{q-1}+1$ for $ r \in[m+1].$ Consequently, for each $r$ between $1$ and $\frac{q^{m+1}-1}{q-1}-m-1$, there exists a unique $\ell \in [m]$ so that $\overline{d}_{\ell +1}(PG(m,\KK)^*)-(\ell+1)+1 \le r \le \overline{d}_{\ell +2}(PG(m,\KK)^*)-(\ell+2).$ Thus, using \Cref{lem:gap-formula}, we obtain $d_r(PG(m,\KK))=r+\ell +1,$ where $\ell \in [m]$ is the unique number so that $\frac{q^{\ell}-1}{q-1}-\ell+1 \le r \le \frac{q^{\ell+1}-1}{q-1}-\ell -1$. 
\end{Example}

\begin{Example}[Affine geometries]\label{ex:affineGeometries}
Let $\KK=\mathbb{F}_q$ be a finite field of order $q$, where $q$ is a power of a prime.  Given a natural number $m\ge 1$, the \textit{affine geometry} $AG(m,\KK)$ is a rank $m+1$ matroid obtained from the projective geometry $PG(m,\KK)$ by removing a hyperplane of $PG(m,\KK)$.  See~\cite[Chapter~6.2]{Oxley2011} for a detailed discussion.  Explicitly, the ground set of $AG(m,\KK)$ consists of all $q^m$ points in $\KK^m$.  We regard $x_1,\ldots,x_m$ as the coordinates on $\KK^m$.  For a point $x=(x_1,\ldots,x_m)\in \KK^m$ we define the lift $\hat{x}$ of $x$ to $\KK^{m+1}$ (with coordinates $x_0,\ldots,x_m$) as the vector $\hat{x}:=(1,x_1,\ldots,x_m)$.  If $X\subset \KK^m$ is a collection of points, then $\hat{X}\subset \KK^{m+1}$ is the collection of vectors in $\KK^{m+1}$ obtained by lifting every point of $X$.  The subset $X$ is independent in the matroid $AG(m,\KK)$ if $\hat{X}$ is a linearly independent set of vectors.  The flats of rank $i$ ($1\le i\le m+1$) of $AG(m,\KK)$ correspond exactly to affine linear subspaces of $\KK^m$ of dimension $i-1$ (equivalently linear subspaces of $\KK^{m+1}$ which intersect non-trivially with $x_0=1$).  Thus, a flat of rank $i$ in $AG(m,\KK)$ is isomorphic to $AG(i-1,\KK)$ and hence has $q^{i-1}$ elements for $1\le i\le m+1$.  Evidently, $AG(m,\KK)$ is a perfect matroid design.  Hence, \Cref{thm:PMD-indegree} yields
\[
\widehat{\alpha}(I_{\Delta(AG(m,\KK))})=\dfrac{q^m}{q^m-(m+1)} \quad\mbox{and}\quad \widehat{\alpha}(I_{\Delta(AG(m,\KK)^*)})=\dfrac{q^m}{m+1}.
\]
From \Cref{eq:flat-dual-elong}, we have $d_r(AG(m,\KK)^*)=q^m-q^{m-r}$ for $r\in [m]$ and  $d_{m+1}(AG(m,\KK)^*)=q^m$.  Thus, if $s\in \NN$ satisfies $s=s'(m+1)+r$ for $r\in [m+1]$ and $s'\in\ZZ_{\ge 0}$, \Cref{thm:PMD-indegree}~\eqref{thm:PMD-indegreec} yields
\[
\alpha(I_{\Delta(AG(m,\KK)^*)}^{(s)})=s'q^m+d_r(AG(m,\KK)^*)=
\begin{cases}
(s'+1)q^m & r=m+1 \\
(s'+1)q^m-q^{m-r} &   \mbox{otherwise}.
\end{cases}
\]
Using \Cref{thm:PMD-indegree}~\eqref{thm:PMD-indegreeb}  (and Lemma~\ref{lem:WeiDualityMatroids}), the initial degree of $I^{(s)}_{\Delta(AG(m,\KK))}$ can also be determined in terms of the Wei dual of the sequence $\{d_r(AG(m,\KK))\}_{r=1}^{m+1}$, which can be described via a computation similar to the one in \Cref{ex:projectiveGeometries}.
\end{Example}

\begin{Example}[Steiner systems, II]\label{ex:SteinerSystemPMD}
Let $\mathcal{S}=(E,\mathfrak{B})$ be a Steiner system of type $S(t,k,n)$. Let  $\cB$ be those $t+1$ subsets of $E$ that are not contained in any block of $\mathcal{S}$. Then $\cB$ satisfies the basis exchange property and, hence, gives rise to a matroid $M$ of rank $t+1$ on the ground set $E$ of size $n$. The matroid arising this way is known as the {\it matroid design of the Steiner system} $\mathcal{S}$, and it is a perfect matroid design (also a paving matroid) as its hyperplanes are the blocks of the Steiner system $\mathcal{S}$, and for each $0 \le i \le t-1$, the rank $i$ flats are all size $i$ subsets of $E$. See \cite[Chapter 12]{Welsh76} for a detailed discussion. Generally, this matroid $M$ differs from the matroid that appears in \Cref{def:matroidfromSteiner}. In fact, these two matroids match if and only if $k=t+1.$ Let $M$ be a matroid design of a Steiner system $\mathcal{S}$ of type $S(t,k,n).$ Then, by \Cref{thm:PMD-indegree}, the generalized Hamming weights of $M$ and $M^*$ both are subadditive, and hence, \Cref{thm:PMD-indegree} yields that \[\widehat{\alpha}(I_{\Delta(M)})=\dfrac{n}{n-t-1} \text{ and } \widehat{\alpha}(I_{\Delta(M^*)})=\dfrac{n}{t+1}.\] From \Cref{eq:flat-dual-elong}, we have $d_1(M^*)=n-k$ and $d_r(M^*)=n-t+r-1$ for $2 \le r \le t+1.$ Thus, if $s=q(t+1)+r$ for some $q \in \ZZ_{\ge 0} $ and $r \in [t+1]$, then \Cref{thm:PMD-indegree}~\eqref{thm:PMD-indegreec} yields that \[
    \alpha(I_{\Delta(M^*)}^{(s)})=nq+d_r(M^*)=
\begin{cases}
nq+n-k &  r=1 \\
nq+n-t+r-1 & \mbox{otherwise}.
\end{cases}
    \] Using \Cref{lem:WeiDualityMatroids}, the sequence $\{d_r(M)\}_{r=1}^{n-t-1}$ is the Wei dual of the sequence $\{d_r(M^*)\}_{r=1}^{t+1}$, which is given as follows \[ 
    d_r(M)=
\begin{cases}
t+r &   \text{when } 1 \le r \le k-t \\
t+r+1 &  \text{when } k-t+1 \le r \le n-t-1.
\end{cases} \] Thus, if $s =q(n-t-1)+r$ for some $q \in \ZZ_{\ge 0} $ and $r \in [n-t-1]$, then \Cref{thm:PMD-indegree}~\eqref{thm:PMD-indegreeb} yields that  \[
    \alpha(I_{\Delta(M)}^{(s)})=nq+d_r(M)=
\begin{cases}
nq+t+r &  \text{when } 1 \le r \le k-t \\
nq+t+r+1 & \text{when } k-t+1 \le r \le n-t-1.
\end{cases}\]
    
\end{Example}

\section{Codes with subadditive and cosubadditive generalized Hamming weights}\label{sec: gen-CT}

In this section, we make connections between the coding theory literature and the previous section. We identify several classes of linear codes whose generalized Hamming weights form a \rsubad{} sequence.  
Recall that, for a code $\C$ with matroid $M=M(\C)$, the $r$-th generalized Hamming weight of $\C$ satisfies $d_r(\C)=d_r(M^*)=d_r(M(\C^\perp))$ (see \Cref{rem:HammingWeightsOfMatroidsVsLinearCodes}).  Throughout this section, we will assume that our codes never have a column of zeros in their generator matrix or parity check matrix, equivalently $M(\C)$ has no loops or coloops (in terms of generalized Hamming weights, $d_1(\C)\ge 2$ and $d_k(\C)=n$).

We begin with maximum distance separable (or MDS) codes, which made an appearance in \Cref{Ex:MDS-uniform}.

\begin{Example}[MDS codes]
An $[n,k]$-code $\C$ is MDS if its parity check matroid $M(\C^\perp)=M(\C)^*$ is the uniform matroid ${\mathrm{U}}_{n,n-k}$.  Equivalently, via \Cref{prop:PavingHamming}~\eqref{prop:PavingHamminga}, $\C$ is MDS if and only if $d_1(\C)=d_1(M(\C^\perp))=n-k+1$.  Since $\C$ is MDS if and only if $\C^\perp$ is MDS, it follows from \Cref{cor:sparse-paving} that $\widehat{\alpha}(I_{\Delta(M(\C)^*)})=\dfrac{n}{k}$ and $\alpha(I_{\Delta(M(\C)^*)}^{(s)})=qn+(n-k+r)$, where $q\in\ZZ_{\ge 0}$, $r\in [k]$, and $s=qk+r$, for all $s\in \NN$.
\end{Example}

\begin{Example}[Near MDS and almost MDS codes]\label{rem:lincod_paving}
The notion of a \textit{near} MDS code was introduced in~\cite{DL95}; an $[n,k]$-code $\C$ is \textit{near} MDS if $d_1(\C)=n-k$ and $d_r(\C)=n-k+r$ for $2\le r\le n-k$.  Around the same time, \textit{almost} MDS codes were introduced in~\cite{DeBoer96, DeBoerThesis97}.  An $[n,k]$-code $\C$ is almost MDS if $d_1(\C)=n-k$.  Both of these notions have received considerable attention in the literature over the past several decades.  From \Cref{prop:PavingHamming}~\eqref{prop:PavingHammingb}, a code is
\begin{itemize}
\item[-] Near MDS if and only if it is not MDS and its parity check matroid is sparse paving.
\item[-] Almost MDS if and only if its parity check matroid is paving.
\end{itemize}
By \Cref{thm:Paving-indegree} and \Cref{cor:sparse-paving}, the generalized Hamming weights of $\C$ form a \rsubad{} sequence if $\C$ is near MDS or if $\C$ is almost MDS and $d_1(\C^\perp)\ge \frac{k+2}{2}$.
\end{Example}

Next, we consider Hamming codes and simplex codes. Our reference is~\cite[Section~1.8]{Huffman-Pless-2003}, and we use their notation.
\begin{Example}[Hamming codes and simplex codes]\label{ex:HammingAndSimplex}
The \textit{Hamming code} $\mathcal{H}_{q,m}$ is defined over the field $\KK=\mathbb{F}_q$ and has a parity check matrix $H_{q,m}$ whose columns correspond to the points of the projective geometry $PG(m-1,q)$ which appeared in \Cref{ex:projectiveGeometries}.  It follows that the generalized Hamming weights of $\mathcal{H}_{q,m}$ coincide with those of the projective geometry $PG(m-1,q)$.

The dual $\mathcal{H}_{q,m}^{\perp}$ of the Hamming code $\mathcal{H}_{q,m}$ is an $m$-dimensional \textit{simplex code} over $\mathbb{F}_q$; evidently, the generator matrix for $\mathcal{H}_{q,m}^{\perp}$ is $H_{q,m}$.  It follows that the generalized Hamming weights of $\mathcal{H}_{q,m}^{\perp}$ coincide with those of the matroid $PG(m-1,\mathbb{F}_q)^*$.  By \Cref{ex:projectiveGeometries}, $d_r(\mathcal{H}_{q,m}^{\perp})=\frac{q^{m-r}(q^r-1)}{q-1}$ for $r\in [m]$.  Clearly the initial degrees of $I^{(r)}_{\Delta(M(\mathcal{H}_{q,m}^{\perp}))}$ can be calculated as in \Cref{ex:projectiveGeometries}.
\end{Example}

It is known that simplex codes are both \textit{Griesmer codes} (see~\cite[Example~7.10.13]{Huffman-Pless-2003}) and \textit{constant weight codes} (see~\cite[Theorem~1.8.3]{Huffman-Pless-2003}).  These are our next two examples.

\begin{Example}[Griesmer codes]\label{ex:Griesmer}
Let $\C$ be an $[n,k,d]$-code over $\KK=\mathbb{F}_q$ with $k\ge 1$.  The \textit{Griesmer bound} states that
$n\ge \sum\limits_{i=0}^{k-1} \left\lceil\dfrac{d}{q^i}\right\rceil$ (see~\cite{Griesmer-1960} for the binary case and~\cite{Solomon-Stiffler-1965} for arbitrary finite fields). 
If equality is obtained in this bound, then $\C$ is called a \textit{Griesmer code}.  Due to their optimality, constructing Griesmer codes is quite an active area of research in coding theory.

Suppose that $\C$ is a Griesmer code with matroid $M=M(\C)$ so that $M$ has no loops or coloops (that is, neither the generator matrix for $\C$ nor the parity check matrix has a column of zeros).  It follows from~\cite[Theorem~7.10.12]{Huffman-Pless-2003} that
$
d_r(\C)=\sum\limits_{i=0}^{r-1}\left\lceil \dfrac{d}{q^i}\right\rceil.
$
We prove that $\{d_r(\C)\}_{r=1}^k$ is a \rsubad{} sequence.  To prove subadditivity, suppose that $i,j\in [k]$ so that $i+j\in[k]$.   Then
\[
d_i(\C)+d_j(\C)=
\sum_{s=0}^{i-1} \left\lceil \dfrac{d}{q^s}\right\rceil + \sum_{s=0}^{j-1} \left\lceil \dfrac{d}{q^s}\right\rceil 
\ge \sum_{s=0}^{i-1} \left\lceil \dfrac{d}{q^s}\right\rceil + \sum_{s=i}^{i+j-1} \left\lceil \dfrac{d}{q^s}\right\rceil
= \sum_{s=0}^{i+j-1} \left\lceil \dfrac{d}{q^s}\right\rceil= d_{i+j}(\C),
\]
where the inequality follows from the term-wise comparison $\left\lceil \dfrac{d}{q^s}\right\rceil\ge \left\lceil \dfrac{d}{q^{s+i}}\right\rceil$ for $0\le s\le j-1$.  Now we prove that $\{d_r(\C)\}_{r=1}^k$ is cosubadditive.  Let $r,a,b\in [k]$ so that $r+k=a+b$.  Observe that $r \le a, b.$  Then
\begin{align*}
d_a(\C)+d_b(\C)=\sum_{s=0}^{a-1} \left\lceil \dfrac{d}{q^s}\right\rceil + \sum_{s=0}^{b-1} \left\lceil \dfrac{d}{q^s}\right\rceil = & \sum_{s=0}^{r-1} \left\lceil \dfrac{d}{q^s}\right\rceil + \sum_{s=r}^{a-1} \left\lceil \dfrac{d}{q^s}\right\rceil + \sum_{s=0}^{b-1} \left\lceil \dfrac{d}{q^s}\right\rceil\\
\ge & \sum_{s=0}^{r-1} \left\lceil \dfrac{d}{q^s}\right\rceil + \sum_{s=b}^{a+b-r-1} \left\lceil \dfrac{d}{q^s}\right\rceil + \sum_{s=0}^{b-1} \left\lceil \dfrac{d}{q^s}\right\rceil\\
= & \sum_{s=0}^{r-1} \left\lceil \dfrac{d}{q^s}\right\rceil + \sum_{s=0}^{k-1} \left\lceil \dfrac{d}{q^s}\right\rceil= d_r(\C)+d_k(\C).
\end{align*}
By \Cref{prop:d-sub}\eqref{prop:d-subb}, $\{d_r(\C)\}_{r=1}^k$ is cosubadditive.  It follows from \Cref{thm:SR} that
$
\widehat{\alpha}(I_{\Delta(M^*)})=\frac{n}{k},
$
$
\widehat{\alpha}(I_{\Delta(M)})=\frac{n}{n-k},
$
and $\alpha(I_{\Delta(M^*)}^{(s)})=s'd_k(\C)+d_r(\C)$ for $s'\in \ZZ_{\ge 0}, r\in [k]$ so that $s=s'k+r$.
\end{Example}

\begin{Example}[Constant weight codes]\label{Ex:constant-weight}
An $[n,k,d]$-code is a \textit{constant weight code} if every non-zero codeword has the same weight.  The simplex codes in Example~\ref{ex:HammingAndSimplex} are constant weight, and in fact, Bonisoli characterizes all constant weight codes as replications of simplex codes~\cite{Bonisoli-Constant-Weight} (see also~\cite[Theorem~7.9.5]{Huffman-Pless-2003}).  It is a consequence of \cite[Theorem~1]{Liu-Chen-Notes-Value-Function-2010} (see also \cite[Corollary 2.1]{JV14}) that
$
d_i(\C)= \frac{d(q^i-1)}{q^{i-1}(q-1)} \text{ for all } i\in[k].
$
In fact, by \cite[Corollary 2.2]{JV14}, we have the converse statement: any linear code with generalized Hamming weights given by these formulas is a code of constant weight $d$.

Constant weight codes fall into two categories that we have already seen.  First, the matroid of a constant weight code is a perfect matroid design (see \Cref{gen-PMD}).  One can deduce this from the fact that $I_{\Delta(M^*)}$ has a \textit{pure resolution} by~\cite[Corollary~3.1]{JV14}, and the fact that if $I_{\Delta(M^*)}$ has a pure resolution then $M$ is a perfect matroid design~\cite[Theorem~3.1.7]{Armenoff-thesis-2015}.  Second, a constant weight code is a Griesmer code (see \Cref{ex:Griesmer}); we readily verify that $n=d_k(\C)=\dfrac{d(q^k-1)}{q^{k-1}(q-1)}=\sum\limits_{s=0}^{k-1} \dfrac{d}{q^s}$ satisfies the Griesmer bound with equality.

It now follows from \Cref{thm:PMD-indegree} or \Cref{ex:Griesmer} that, for any $s =s'k+r$ with $s'\in \ZZ_{\ge 0}$ and $r \in [k]$,
\[
\alpha(I_{\Delta(M^*)}^{(s)})=s'd_k(\C)+d_r(\C),\quad \widehat{\alpha}(I_{\Delta(M^*)})= \dfrac{d(q^k-1)}{kq^{k-1}(q-1)}, \quad\mbox{and} \quad \widehat{\alpha}(I_{\Delta(M)})=\dfrac{d(q^k-1)}{d(q^k-1)-kq^{k-1}(q-1)}.
\]
\end{Example}

We next consider perfect codes, which are those that attain equality in \textit{sphere packing bound}~\cite[Section~1.12]{Huffman-Pless-2003}.

\begin{Example}[Perfect Codes]
Suppose $\C$ is an $[n,k,d]$-code in $\mathbb{F}^n_q$ and put $t=\lfloor (d-1)/2\rfloor$.  The code $\C$ is called a \textit{perfect code} if every vector in $\mathbb{F}^n_q$ is contained in exactly one sphere of radius $t$ centered on a codeword of $\C$.  Remarkably, there is a complete classification of perfect codes~\cite[Theorem~1.12.3]{Huffman-Pless-2003}.  We consider this classification only for linear perfect codes; the classification also extends to non-linear codes.

The only linear codes of minimum distance three that are perfect are the Hamming codes $\mathcal{H}_{q,m}$ discussed in \Cref{ex:HammingAndSimplex}.  Aside from the Hamming codes, the only linear perfect codes are the $[23,12,7]$ binary Golay code and the $[11,6,5]$ ternary Golay code~\cite[Sections~10.1, 10.4]{Huffman-Pless-2003}.  

The binary and ternary Golay codes can be constructed by \textit{puncturing} the so-called \textit{extended} binary and ternary Golay codes, respectively~\cite[Section~1.9]{Huffman-Pless-2003}.  It is readily checked that the $[11,6,5]$ ternary Golay code along with the $[12,6,6]$ \textit{extended} ternary Golay code are both Griesmer codes, hence their generalized Hamming weights are subadditive and cosubadditive by \Cref{ex:Griesmer}.

By \cite[Theorem~6]{We}, the generalized Hamming weights of the $[24,12,8]$ extended binary Golay code are
$\{8,12,14,15,16,18,19,$  $20,21,22,23,24\}$.  It is readily checked that this is a subadditive sequence.  Since the $[24,12,8]$ extended binary Golay code is self-dual, the generalized Hamming weights are also cosubadditive.

We could not compute or find a reference for the generalized Hamming weights of the $[23,12,7]$ binary Golay code, but we can argue that they must form a \rsubad{} sequence.  Let $\{d_i\}_{i=1}^{12}$ be the generalized Hamming weights of the extended binary Golay code (see above).  When we puncture a code, the number of generalized Hamming weights stays the same, and their value drops by at most one.  Let $\{\delta_i\}_{i=1}^{12}$ be the generalized Hamming weights of the binary Golay code.  It is well-known that $\delta_1=7$.  By \Cref{lem:genMindistMatroid}~\eqref{lem:genMindistMatroidb} and the fact that $d_j -1 \le \delta_j $, we know that $\delta_j=23-12+j=11+j$ for $6\le j\le 12$.  For $2\le i\le 5$, $\delta_i=d_i-1+\epsilon_i$, for some  $\epsilon_i \in \{0,1\}$.

We argue that $\{\delta_i\}_{i=1}^{12}$ is \rsubad{}.  For subadditivity, let $i,j\in [12]$ so that $i+j\in [12]$. Then, using \Cref{lem:genMindistMatroid}~\eqref{lem:genMindistMatroida}-\eqref{lem:genMindistMatroidb}, $\delta_{i+j} \le 11+i+j =(6+i)+(5+j) \le (d_i-1)+(d_j-1) \le \delta_i +\delta_j.$

For cosubadditivity, we use \Cref{thm:d^*subadd}.  Let $r,a,b\in [12]$ so that $r+12=a+b$ and $a \ge b$. Note that $a\ge b>r$ and $a \ge  \left \lceil \frac{12+r}{2} \right \rceil  \ge 7.$  If $\delta_r=23-12+r$, then by \Cref{lem:genMindistMatroid}~\eqref{lem:genMindistMatroidc}, $\delta_i=23-12+i=11+i$, for all $i\geq r$, and hence, $\delta_r+\delta_{12}=(11+r)+(11+12)=(11 +a)+(11+b)=\delta_{a}+\delta_{b}$. So, assume that $\delta_r\leq 11+r-1=10+r=10+(a+b-12)=a+b-2$. This implies  $\delta_r+\delta_{12}\leq a+b+21=(11+a)+(10+b)=\delta_a+(10+b) \le \delta_a+\delta_b$ if $b \ge 3$. In the case when $b=2$, $r$ has to be one, and $a$ has to be $11$. Thus, $\delta_1+\delta_{12}=7+23=30=22+8\le \delta_{11}+\delta_2.$ Hence, $\{\delta_i\}_{i=1}^{12}$ is \rsubad{}.
\end{Example}

Next, we consider the classic cases of affine and projective Reed-Muller codes. 

\begin{Example}[first-order affine Reed-Muller codes]\label{ex:RMcodes}
Let $\mathbb K=\mathbb F_q$ be the field with $q$ elements. Let $m$ be a positive integer and let $V:=\mathbb K^m$. Set  $n:=|V|=q^m$. Let $P_1,\ldots, P_n$ be all the elements of $V$, listed as column vectors in some given order. Let $a$ be a positive integer, let $A:=\mathbb K[y_1,\ldots,y_m]$ be the ring of polynomials in $m$ variables with coefficients in $\mathbb K$, and let $A_{\le a}$ be the $\mathbb K$-vector space of all polynomials in $A$ of degrees $\le a$. Let $\phi:A_{\le a}\rightarrow \mathbb K^n$ be the $\mathbb K$-linear map given by $\phi(f)=(f(P_1),\ldots,f(P_n))$. The image of $\phi$ is a linear code, called {\em the (affine) $q$-ary Reed-Muller code of order $a$}, denoted $\mathcal{RM}_q(a,m)$. 

If $a<q$, then the dimension of this linear code is $\displaystyle \binom{m+a}{a}$, but if $a\ge q$, the formula for the dimension is quite challenging; see \cite{KLP}.  When $a=1$, $\{1,y_1,\ldots,y_m\}$ is a basis for $A_{\le 1}$ and so $\mathcal{RM}_q(1,m)$ is an $[n,m+1]$-linear code with a generator matrix
\begin{equation}\label{eq:genmatrixReedMuller}
\left[\begin{array}{cccc}
    1&1&\cdots&1  \\
    P_1&P_2&\cdots&P_n 
    \end{array}\right].
\end{equation}
Observe that the matroid $M=M(\mathcal{RM}_q(1,m))$ is thus the affine geometry $AG(m,\KK)$ in \Cref{ex:affineGeometries}.  The generalized Hamming weights of $\mathcal{RM}_q(1,m)$ coincide with those of $AG(m,\KK)^*$, and so the initial degree statistics of both $I_{\Delta(M)}$ and $I_{\Delta(M^*)}$ follow from \Cref{ex:affineGeometries}.
\end{Example}

\begin{Remark}\label{rem:GHWAffineReedMuller}
The formula for generalized Hamming weights of affine Reed-Muller codes of any order is provided in \cite[Theorem 5.10]{HP}, which we recall here. First, order $P_1,\ldots, P_n$ in increasing lexicographic order, that is $(a_1,\ldots,a_m)^T\prec_{{\rm Lex}} (b_1,\ldots,b_m)^T$ if and only if $a_1=b_1,\ldots, a_{l-1}=b_{l-1}$ and $a_l<b_l$, for some $l\in\{1,\ldots,m\}$. Let $1\le r\le \dim(\mathcal{RM}_q(a,m))$, and let $(a_1,\ldots,a_m)^T$ be the $r$-th element of $\{P_1,\ldots, P_n\}$ in the lexicographic order with the property $a_1+\cdots+a_m>(q-1)m-a-1$. Then, the $r$-th generalized Hamming weight of this linear code is:
\[
d_r(\mathcal{RM}_q(a,m))=\sum\limits_{i=1}^ma_{m-i+1}q^{i-1}+1.
\]
In~\Cref{que: GHWReedMuller}, we ask whether this sequence is subadditive or cosubadditive.
\end{Remark}

\begin{Example}[First-order projective Reed Muller codes]
The affine Reed-Muller codes in \Cref{ex:RMcodes} have a projective analog, called {\em projective Reed-Muller codes}  (see~\cite{Lachaud-1986, Sorensen-1991}).

In general, we consider $\mathbb P(V)=\{Q_1,\ldots,Q_s\}$, where $\mathbb K=\mathbb F_q$ is the field with $q$ elements and $V=\mathbb K^m$ is as in the previous examples, but $Q_i$ are now the projective $\mathbb K$-rational points in $\mathbb P^{m-1}$. This implies that $s=(q^m-1)/(q-1)$. For $i=1,\ldots,s$, let $P_i$ be the transpose of the standard representative of $Q_i$; by ``standard representative,'' one understands the vector whose first nonzero entry equals 1.

Let $A=\mathbb K[y_1,\ldots,y_m]$, and for $a\ge 1$ consider $A_a$ the $\mathbb K$-vector space of homogeneous polynomials of degree $a$ (the zero polynomial is assumed to have any degree).  Then $A_a$ has a basis given by all the monomials in the variables $y_1,\ldots,y_m$ of degree $a$. With this, consider the (well-defined) $\mathbb K$-linear map $\psi:A_a\rightarrow \mathbb K^s$, given by $\psi(f)=(f(P_1),\ldots,f(P_s))$. The image of $\psi$ is the projective Reed-Muller code of order $a$.

As before, suppose $a=1$. Then this is an $[s,m]$-linear code with a generator matrix $\displaystyle \left[\begin{array}{cccc}
    P_1&P_2&\cdots&P_s
    \end{array}\right]$; 
This is exactly the generator matrix of the simplex code from~\Cref{ex:HammingAndSimplex}, so order one projective Reed-Muller codes are precisely simplex codes, and their duals are the $\mathcal{H}_{q,m}$ Hamming codes.  Thus, if $M$ is the matroid of a first-order projective Reed-Muller code, the initial degree statistics of $I_{\Delta(M)}^{(s)}$ and $I_{\Delta(M^*)}^{(s)}$ follow from \Cref{ex:HammingAndSimplex}, which in turn follow from \Cref{ex:projectiveGeometries}.
\end{Example}

\begin{Remark}\label{rem:GHWProjReedMuller}
Even though the construction of projective Reed-Muller codes seems naturally related to that of affine Reed-Muller codes, a complete description of the generalized Hamming weights of projective Reed-Muller codes is an open problem.  See~\cite{RVK18, MVV20, SJ-2023} for recent work on these.  We ask in~\Cref{que: GHWReedMuller} whether the sequence of generalized Hamming weights is subadditive or cosubadditive for the higher order projective Reed-Muller codes.
\end{Remark}

\section{Matroid configurations}\label{sec:MatroidConfigurations}

The remarkable paper~\cite{GHMN17} shows that the Stanley-Reisner ideal of a matroid can be used in a natural way to define the so-called \textit{matroid configuration} of hypersurfaces in projective space.  Many properties of matroid configurations may be easily read off from the properties of the Stanley-Reisner ideal of the matroid.  The paper~\cite{GHMN17} builds on a fundamental fact about matroids: a simplicial complex $\Delta$ is the independence complex of a matroid if and only if all the symbolic powers of $I_{\Delta}$ are Cohen-Macaulay~\cite{MT-2011, Varbaro-2011}.  In this section, we use the tools we have developed so far and the machinery of~\cite{GHMN17} to compute the Waldschmidt constant for matroid configurations in terms of generalized Hamming weights and bound the resurgence and asymptotic resurgence for matroid configurations of points.  In particular, we compute the asymptotic resurgence of a matroid configuration of points coming from a perfect matroid design.

\subsection{Resolutions of the Stanley-Reisner ideals of Matroids} \label{sec: res of matroids}

Let $I_{\Delta(M)}\subset S:=\LL[x_1,\ldots,x_n]$ be the Stanley-Reisner ideal of a rank $k$ matroid $M$ in the polynomial ring $S$ over a field $\LL$.  Since $S/I_{\Delta(M)}$ is Cohen-Macaulay it has a minimal (multi-)graded free resolution $\mathbb{F}_{\bullet}\twoheadrightarrow S/I_{\Delta(M)}$ of length equal to ${\rm ht}(I_{\Delta(M)})=n-{\rm rk}(M)=n-k$:
\[
\mathbb{F}_\bullet :\ \ 0\rightarrow {\bf F}_{n-k} \rightarrow \cdots \rightarrow {\bf F}_1\rightarrow S,
\]
where, for each $r \in [n-k]$, ${\bf F}_r$ is a free $\NN^n$-graded $S$-module of finite rank, i.e. ${\bf F}_r=\oplus S(-\alpha)^{\beta_{r,\alpha}(M)}$, where $\alpha\in \NN^n$ and $\beta_{r,\alpha}(M)$ is the number of copies of $S(-\alpha)$ that appear in ${\bf F}_r$.  We refer to the index $r$ in ${\bf F}_r$ as the \textit{homological degree} and the number $\beta_{r,\alpha}(M)$ as the multigraded Betti number.  We define the \textit{coarsely graded} Betti number $\beta_{r,j}(M)$ as $\beta_{r,j}(M):=\sum\limits_{|\alpha|=j} \beta_{r,\alpha}(M)$.  Since $I_{\Delta(M)}$ is squarefree, the tuples $\alpha\in\NN^n$ appearing in the minimal free resolution of $I_{\Delta(M)}$ consist only of zeros and ones.  Thus, we abuse notation and, for a subset $U\subset [n]$, we let $\beta_{r,U}(M)$ be the multigraded Betti number $\beta_{r,\alpha_U}(M)$ where $\alpha_U\in\NN^n$ is the indicator vector of $U$; that is $(\alpha_U)_i=1$ if $i\in U$ and $(\alpha_U)_i=0$ if $i\notin U$.

In the next result, we connect the supports of squarefree monomials in $I_{\Delta(M)}^{(s)}$ with a description of the multigraded Betti numbers of $S/I_{\Delta(M)}$ due to Johnsen and Verdure~\cite{JV13}.

\begin{Proposition}\label{thm:SRResolution}
Let $M$ be a matroid on the ground set $E$ and let $U\subseteq E$.  The following are equivalent.
\begin{enumerate} 
\item \label{thm:SRResolutiona} $U$ is a circuit of $M^{(r)}$
\item \label{thm:SRResolutionb} $x^U$ is a support-minimal squarefree monomial in $I^{(r)}_{\Delta(M)}$
\item \label{thm:SRResolutionc} $E\setminus U$ is a flat of $M^*$ of rank $n-k-r$
\item \label{thm:SRResolutiond} The (multi-)graded Betti number $\beta_{r,U}(M)$ of $S/I_{\Delta(M)}$ is nonzero.
\end{enumerate}
In particular, $d_r(M)$ coincides both with the minimum of $|U|$ such that $\beta_{r,U}(M)\neq 0$ and with the minimum degree of a squarefree monomial in $I^{(r)}_{\Delta(M)}$.
\end{Proposition}
\begin{proof}
The statements \eqref{thm:SRResolutiona}, \eqref{thm:SRResolutionb}, and \eqref{thm:SRResolutionc} are equivalent by \Cref{thm:minDist} and \Cref{lem:ElongationBasesAndCircuits}.  We show that \eqref{thm:SRResolutiona} is equivalent to \eqref{thm:SRResolutiond}.  It follows from~\cite[Theorem~4.1]{JV13} that for a subset $U\subset E$ the multigraded Betti number $\beta_{r,U}(M)$ of $S/I_{\Delta(M)}$ is nonzero precisely when $U$ is minimal (under inclusion) for the property $|U|-\rk_M(U)=r$.  Thus $\beta_{r,U}(M)\neq 0$ precisely when $U$ is a circuit of $\cE^{r-1}(M)=M^{(r)}$.  The final statement is immediate.
\end{proof}

The {\em $($Castelnuovo-Mumford $)$ regularity} of $S/I_{\Delta(M)}$ is by definition
\[
{\rm reg}(S/I_{\Delta(M)}):=\max\{j-r\,:\, \beta_{r,j}(M)\neq 0\}.
\]
The regularity of $I_{\Delta(M)}$ is given by ${\rm reg}(I_{\Delta(M)})={\rm reg}(S/I_{\Delta(M)})+1$.  By \cite[Theorem~4.2, Corollary~4.3, and Remark~4.2]{JV13}, if $M$ is a matroid of rank $k$ on $n$ elements with no coloops, then  $S/I_{\Delta(M)}$ is level and \begin{equation*}\label{regularity}
    {\rm reg}(S/I_{\Delta(M)})=d_{n-k}(M)-n+k=k.
\end{equation*}

\begin{Remark}\label{rem:pure-reso}
The resolution for $S/I_{\Delta(M)}$ is \textit{pure} if there is exactly one nonzero graded Betti number for each homological degree.  By \Cref{thm:SRResolution}, this is equivalent to every flat of any fixed rank of $M^*$ having the same size.  That is the minimal free resolution of $S/I_{\Delta(M)}$ is pure if and only if $M^*$ is a perfect matroid design (see also~\cite{Armenoff-thesis-2015}).  It follows from our discussions in \Cref{gen-PMD} and \Cref{Ex:constant-weight} that if $M$ is a uniform matroid, the dual matroid of an affine or projective geometry (equivalently, the matroid of an order one affine or projective Reed-Muller code), or the matroid of a constant weight code, then $S/I_{\Delta(M^*)}$ has a pure resolution.  Betti numbers for these pure resolutions are computed when $M$ is the matroid of a constant weight code~\cite[Theorem~3.1]{JV14} and when $M$ is the matroid of a first-order affine Reed-Muller code~\cite{GS20}.
\end{Remark}

\subsection{Matroid configurations and their symbolic powers}  
In this subsection, we discuss bounds on the \textit{resurgence} and \textit{asymptotic resurgence} of certain specializations of the Stanley-Reisner ideal of a matroid, following ~\cite{GHMN17}.

These statistics were introduced by Bocci and Harbourne~\cite{BH10} and Guardo, Harbourne, and Van Tuyl~\cite{GHV13} in order to quantify the well-studied \textit{containment problem} in commutative algebra.  The containment problem, in general, is to characterize those pairs $(a,b)\in\NN^2$ for which $I^{(a)}\subset I^b$, where $I$ is an ideal in a commutative ring (see~\cite{Szemberg-Szpond-2017} for a survey of this problem focused on the zero-dimensional case).

The resurgence of an ideal $I$, denoted $\rho(I)$, in a polynomial ring was defined in~\cite{BH10} as follows:
\[
\rho(I)=\sup\left\lbrace\frac{s}{r}~:~ s, r\ge 1 \text{ and } I^{(s)}\not\subset I^r\right\rbrace.
\]
An asymptotic version of resurgence, called asymptotic resurgence and denoted $\widehat{\rho}(I)$, was defined in~\cite{GHV13} as follows:
\[
\widehat{\rho}(I)=\sup\left\lbrace \frac{s}{r}~:~  s, r\ge 1 \text{ and }  I^{(st)}\not\subset I^{rt} \mbox{ for all } t\gg 0\right\rbrace.
\]
 We fix the following Setting and Notation.
\begin{SettingNotation} \label{not: matroid config}
   Let $M$ be a matroid of rank $k$ on a ground set $E$ of size $n$ and $\Delta=\Delta(M)$ its independence complex. Let $S=\LL[x_1,\ldots,x_n]$ be a polynomial ring over a field $\LL$.  Fix an integer $N\ge n-k$, let $R=\LL[y_0,\ldots,y_N]$ be a polynomial ring over $\LL$, and let $f_1,\ldots,f_n$ be homogeneous polynomials in $R$ of degrees $\delta_1,\ldots,\delta_n$, respectively, so that any subset of at most $n-k+1$ of them forms a regular sequence in $R$.  Define the homomorphism of $\LL$-algebras $\phi:S\to R$ by $\phi(x_i)=f_i$.  If $J$ is an ideal of $S$ let $\phi_*(J)$ denote the ideal generated by $\phi(J)$ in $R$.  
\end{SettingNotation}

We now collect the following results from~\cite{GHMN17} -- these come from~\cite[Theorem~3.3, Theorem~3.6, Proposition~3.8, Corollary~4.3, and Corollary~4.6]{GHMN17}.

\begin{Theorem}\cite{GHMN17}\label{thm:MatroidConfigurations}
Adopt the \Cref{not: matroid config}. Then
\begin{enumerate} 
\item \label{thm:MatroidConfigurationsa} If $I_{\Delta}=\bigcap\limits_{B\in\cB(M^*)}P_B$, then $\phi_*(I_{\Delta})=\bigcap\limits_{B\in\cB(M^*)}\phi_*(P_B)=\bigcap\limits_{B\in\cB(M^*)}\langle f_i: i\in B\rangle$.
\item \label{thm:MatroidConfigurationsb} If $\mathbb{F}_{\bullet}$ is a minimal free resolution of $S/I_{\Delta}$ over $S$, then $\mathbb{F}_{\bullet}\otimes_{S}R$ is a minimal free resolution of $R/\phi_*(I_{\Delta})$ over $R$.
\item \label{thm:MatroidConfigurationsc} $\phi_*(I_\Delta^{(s)})=\phi_*(I_\Delta)^{(s)}$ for every $s\ge 1$.
\item \label{thm:MatroidConfigurationsd} If $I^{(s)}_{\Delta}\subset I^r_{\Delta}$, then $\phi_*(I_{\Delta})^{(s)}\subset \phi_*(I_{\Delta})^{r}$ for any nonnegative integers $r,s$.
\item \label{thm:MatroidConfigurationse} If $f_1,\ldots,f_n$ all have the same degree $\delta$, then $\widehat{\alpha}(\phi_*(I_\Delta))=\delta\widehat{\alpha}(I_{\Delta})$.
\item \label{thm:MatroidConfigurationsf} $\rho(\phi_*(I_{\Delta}))\le \rho(I_\Delta)$.
\item \label{thm:MatroidConfigurationsg} $\widehat{\rho}(\phi_*(I_{\Delta}))\le \widehat{\rho}(I_\Delta)$.
\end{enumerate}
\end{Theorem}

From \Cref{thm:MatroidConfigurations}~\eqref{thm:MatroidConfigurationsa}, we see that $\phi_*(I_{\Delta})$ is the defining ideal of a union of complete intersection subvarieties of $\mathbb{P}_{\LL}^N$.  It is, in general, a proper subvariety of the so-called \textit{hypersurface configuration} consisting of all codimension $n-k$ intersections among the hypersurfaces defined by the forms $f_1,\ldots,f_n$.  This is where the terminology \textit{matroid configuration} comes from.

Combining \Cref{thm:MatroidConfigurations} with previous results, we obtain the following.  Below, if $J$ is an ideal, $\omega(J)$ denotes the \textit{largest} degree of a minimal generator of $J$. 

\begin{Corollary} \label{cor: matroid config invariants}
Adopt the  \Cref{not: matroid config}.  Then,
\begin{enumerate} 
\item \label{cor: matroid config invariantsa} $\alpha(\phi_*(I_{\Delta}))=\min\left\{\sum\limits_{i\in C} \delta_i~:~ C\in\circuits(M)\right\}$.
\item \label{cor: matroid config invariantsb} $\omega(\phi_*(I_{\Delta}))=\max\left\{\sum\limits_{i\in C} \delta_i~:~ C\in\circuits(M)\right\}$.
\item \label{cor: matroid config invariantsc} $\widehat{\alpha}(\phi_*(I_{\Delta}))=\min\left\lbrace \dfrac{\sum\limits_{i\in U} \delta_i}{r}~:~ U\in\circuits(M^{(r)}),~r\in [n-k] \right\rbrace$.
\item \label{cor: matroid config invariantsd} $\mathrm{reg}(\phi_*(I_{\Delta}))=\sum\limits_{i\in E} \delta_i-(n-k)+1$.

\end{enumerate}
\end{Corollary}

\begin{proof}
Parts \eqref{cor: matroid config invariantsa} and \eqref{cor: matroid config invariantsb} are immediate from the definition of $\phi_*(I_{\Delta})$ and the fact that the minimal generators of $I_{\Delta}$ are the squarefree monomials coming from circuits of $M$. Now, it follows from \Cref{thm:MatroidConfigurations}~\eqref{thm:MatroidConfigurationsc} and \Cref{thm:GeneratorsOfSymbolicReesAlgebraOfMatroid} that the symbolic Rees algebra $\cR_s(\phi_*(I_{\Delta}))$ is generated as an $R$-algebra by $$\left\{\left(\prod_{j\in U} f_j\right)T^r~:~ U\in \circuits(M^{(r)}),~ r\in [n-k]\right\}.$$   Thus \eqref{cor: matroid config invariantsc} follows from \Cref{thm:ReesAlgebraFiltration}.

By~\cite[Theorem~3.3]{GHMN17}, $\phi_*(I_{\Delta})$ is Cohen-Macaulay, so $\mathrm{reg}(R/\phi_*(I_{\Delta}))$ is determined by the shift in the largest homological degree of its minimal free resolution.  Thus (d) follows from \Cref{thm:SRResolution} and \Cref{thm:MatroidConfigurations}~\eqref{thm:MatroidConfigurationsb}.  
\end{proof}

In general, if $I$ is a radical ideal in a polynomial ring, then the resurgence and asymptotic resurgence are bounded as follows:
\begin{align}\label{res-bounds}
1\le \frac{\alpha(I)}{\widehat{\alpha}(I)}\le \widehat{\rho}(I)\le \rho(I)\le \mathrm{ht}(I).  
\end{align}
The lower bounds on $\widehat{\rho}(I)$ and $\rho(I)$ in~\Cref{res-bounds} are shown in \cite{BH10,GHM13} while the upper bound follows from the seminal containment result of~\cite{ELS01,HH02}.  In case $M$ is a matroid of rank $k$ on a ground set of size $n$, $\mathrm{ht}(I_{\Delta})=n-k$.  We get the following lower bound on the asymptotic resurgence when all the homogeneous polynomials have the same degree.
\begin{Corollary} \label{cor: matroid config resurgence}
Adopt the \Cref{not: matroid config}. Then \begin{enumerate} 
    \item $\widehat{\rho}(\phi_*(I_\Delta)) \le \rho(\phi_*(I_\Delta)) \le \rho(I_\Delta)<n-k$.

    \item  If $f_1,\ldots,f_n$ all have the same degree, then \[
\dfrac{(n-k)d_1(M)}{n}\le\max\left\lbrace \dfrac{rd_1(M)}{d_r(M)}~:~r\in [n-k]\right\rbrace\le \widehat{\rho}(\phi_*(I_\Delta)).
      \]
\end{enumerate}
\end{Corollary}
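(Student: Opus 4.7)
The plan for part (a) is to handle three inequalities separately. The first bound $\widehat{\rho}(\phi_*(I_\Delta)) \le \rho(\phi_*(I_\Delta))$ is immediate from the standard chain recorded in~\eqref{res-bounds}. The second, $\rho(\phi_*(I_\Delta)) \le \rho(I_\Delta)$, follows directly from Theorem~\ref{thm:MatroidConfigurations}(f): any containment $I_\Delta^{(s)} \subseteq I_\Delta^r$ pushes forward to $\phi_*(I_\Delta)^{(s)} \subseteq \phi_*(I_\Delta)^r$, so the set of non-containment pairs $(s,r)$ for $\phi_*(I_\Delta)$ is a subset of that for $I_\Delta$, forcing the claimed inequality of resurgences.

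The heart of (a) is therefore the strict inequality $\rho(I_\Delta) < n-k$. The Ein-Lazarsfeld-Smith/Hochster-Huneke containment only yields $\rho(I_\Delta) \le n-k$, so a sharper argument is required. My plan is to exploit the explicit description of the symbolic Rees algebra from Theorem~\ref{thm:GeneratorsOfSymbolicReesAlgebraOfMatroid}: every minimal monomial generator of $I_\Delta^{(s)}$ factors as a product $\prod_j x^{C_j}$ with $C_j \in \circuits(M^{(r_j)})$ and $\sum_j r_j = s$, where $|C_j| = d_{r_j}(M) \le k + r_j$ by Lemma~\ref{lem:genMindistMatroid}(b). When $s/r$ is sufficiently close to $n-k$, the bound on circuit sizes should force enough low-index factors $C_j\in \circuits(M)$ to extract a full generator of $I_\Delta^r$. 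The main obstacle -- and the hardest part of the corollary -- is making this uniform: producing a constant $\varepsilon > 0$ and $r_0 \ge 1$ so that $I_\Delta^{(s)} \subseteq I_\Delta^r$ whenever $r \ge r_0$ and $s \ge (n-k-\varepsilon)r$, which in turn rules out $\rho(I_\Delta)=n-k$.

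Part (b) is more direct and assembles previously established pieces. I plan to invoke the standard lower bound $\alpha(I)/\widehat{\alpha}(I) \le \widehat{\rho}(I)$ from~\eqref{res-bounds} with $I = \phi_*(I_\Delta)$. Under the hypothesis $\delta_1 = \cdots = \delta_n = \delta$, Corollary~\ref{cor: matroid config invariants}(b) gives $\alpha(\phi_*(I_\Delta)) = \delta\, d_1(M)$, since $d_1(M)$ is the minimum circuit size of $M$. Theorem~\ref{thm:MatroidConfigurations}(e) combined with Corollary~\ref{cor:WaldschmidtSR} gives $\widehat{\alpha}(\phi_*(I_\Delta)) = \delta \min_{1\le r\le n-k}\{d_r(M)/r\}$. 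The $\delta$ cancels in the ratio, producing
\[
\frac{\alpha(\phi_*(I_\Delta))}{\widehat{\alpha}(\phi_*(I_\Delta))} \;=\; \frac{d_1(M)}{\min_{r} d_r(M)/r} \;=\; \max_{1\le r\le n-k}\frac{r\,d_1(M)}{d_r(M)},
\]
which is exactly the second inequality of (b). The first inequality then follows by bounding this maximum below by its value at $r = n-k$ and applying Lemma~\ref{lem:genMindistMatroid}(d), which identifies $d_{n-k}(M)=n-\ell$, yielding the lower bound $(n-k)d_1(M)/(n-\ell)$. Thus, once the strict inequality of part (a) is established, everything else in the corollary is a routine assembly of already-proved results.
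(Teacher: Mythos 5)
Your treatment of part (b) is correct and matches the paper's route exactly: invoke the lower bound $\alpha(I)/\widehat{\alpha}(I)\le\widehat{\rho}(I)$ from~\eqref{res-bounds}, substitute the formulas from Corollary~\ref{cor: matroid config invariants} and Theorem~\ref{thm:MatroidConfigurations}~$(e)$, cancel $\delta$, and evaluate the resulting maximum at $r=n-k$ using Lemma~\ref{lem:genMindistMatroid}~$(d)$. The first two inequalities of part (a) are also handled just as the paper does.

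The gap is in the strict inequality $\rho(I_\Delta)<n-k$, which you correctly identify as the nontrivial content of part (a), but then leave unproved. You outline a plan to extract this from the factorization in Theorem~\ref{thm:GeneratorsOfSymbolicReesAlgebraOfMatroid} and circuit-size bounds, and you explicitly flag ``making this uniform'' as the main obstacle without resolving it. The paper does not attempt such an argument: it simply cites \cite[Corollary~4.20]{DD21}, which establishes that the resurgence of a squarefree monomial ideal is strictly below its big height (here $\mathrm{ht}(I_\Delta)=n-k$). So you've replaced a one-line citation with an incomplete sketch of a from-scratch proof, and the sketch as written doesn't close. A further wrinkle: even if you could show $I_\Delta^{(s)}\subseteq I_\Delta^r$ whenever $r\ge r_0$ and $s\ge(n-k-\varepsilon)r$, ruling out $\rho(I_\Delta)=n-k$ still requires controlling the supremum over the finitely many $r<r_0$ and arguing the overall sup is not approached in the limit -- none of which is addressed. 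Finally, a small but genuine misstatement: for $C_j\in\circuits(M^{(r_j)})$ you have $|C_j|\ge d_{r_j}(M)$, not $|C_j|=d_{r_j}(M)$; the upper bound $|C_j|\le k+r_j$ holds because $\rk(M^{(r_j)})=k+r_j-1$ bounds circuit sizes, not because $d_{r_j}(M)\le k+r_j$.
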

\begin{proof}
Part $(\rm a)$ follows from  \Cref{thm:MatroidConfigurations}~\eqref{thm:MatroidConfigurationsf} and \cite[Corollary~4.20]{DD21} and part $(\rm b)$ follows from \Cref{thm:MatroidConfigurations}~\eqref{thm:MatroidConfigurationse} and \Cref{res-bounds}.
\end{proof}

If $N=n-k$ in \Cref{thm:MatroidConfigurations}, then $\phi_*(I_{\Delta})$ is the defining ideal of a set of points, which we call a {\em matroid configuration of points}.  For a matroid configuration of points, we have the following upper bounds on resurgence and asymptotic resurgence coming from~\cite{BH10} and~\cite{GHM13}:
\begin{align}
\label{ineq:rhoboundpoints}
\rho(\phi_*(I_{\Delta}))\le \dfrac{\mathrm{reg}(\phi_*(I_\Delta))}{\widehat{\alpha}(\phi_*(I_\Delta))}.\\
\label{ineq:rhohatboundpoints}
\widehat{\rho}(\phi_*(I_{\Delta}))\le \dfrac{\omega(\phi_*(I_\Delta))}{\widehat{\alpha}(\phi_*(I_\Delta))}.
\end{align} 
In the bounds on resurgence and asymptotic resurgence that follow, we assume the forms $f_1,\ldots, f_n$ in \Cref{not: matroid config} all have the same degree; using \Cref{cor: matroid config invariants} it is straightforward to modify the bounds in the case of arbitrary degrees.

\begin{Corollary} \label{cor:matroid config points resurgence}
Adopt \Cref{not: matroid config} with $N=n-k$ so that $\phi_*(I_{\Delta})$ is the defining ideal of a matroid configuration of points.  If $f_1,\ldots,f_n$ all have the same degree $\delta$, then
\[
\rho(\phi_*(I_{\Delta}))\le \max\left\lbrace \frac{r}{d_r(M)}\left( n -\frac{n-k-1}{\delta}\right)~:~ r\in [n-k]\right\rbrace
\]
and
\[
\widehat{\rho}(\phi_*(I_{\Delta}))\le \max\left\lbrace \frac{r |U|}{d_r(M)}~:~ U\in\circuits(M),~r\in [n-k] \right\rbrace.
\]
\end{Corollary}
\begin{proof}
The bounds follow from \eqref{ineq:rhoboundpoints} and~\eqref{ineq:rhohatboundpoints}, using the expressions for $\alpha(\phi_*(I_\Delta))$, $\omega(\phi_*(I_\Delta))$, $\widehat{\alpha}(\phi_*(I_\Delta)),$ and $\mathrm{reg}(\phi_*(I_\Delta))$ from \Cref{cor: matroid config invariants}.
\end{proof}

In what follows, a matroid $M$ of rank $k$ is a \textit{matroid design} if all its flats of rank $k-1$ have the same cardinality (see~\cite{YE73, YME70}).  Since the circuits of $M^*$ are the complements of the rank $k-1$ flats of $M$ (see \Cref{sec:prelims}), the circuits of the matroid dual to a matroid design all have the same size. 

\begin{Corollary}\label{cor:resurgenceboundssparsepaving}
Adopt \Cref{not: matroid config} with $N=n-k$ so that $\phi_*(I_{\Delta})$ defines a matroid configuration of points.  Suppose the homogeneous forms $f_1,\ldots,f_n$ all have the same degree $\delta$.  We have the following:
\begin{enumerate}
\item \label{cor:resurgenceboundssparsepavinga} If the generalized Hamming weights of $M$ form a \rsubad{} sequence, then
\[
\frac{(n-k)d_1(M)}{n}\le \widehat{\rho}(\phi_*(I_{\Delta})) \le \rho(\phi_*(I_{\Delta}))\le \frac{n-k}{n}\left(n-\frac{n-k-1}{\delta}\right)
.\]
\item \label{cor:resurgenceboundssparsepavingb} If the generalized Hamming weights of $M$ form a \rsubad{} sequence and $M^*$ is a matroid design, then 
$
\widehat{\rho}(\phi_*(I_\Delta))=\frac{(n-k)d_1(M)}{n}.
$
\item \label{cor:resurgenceboundssparsepavingc} If $M^*$ is a perfect matroid design, then
$
\widehat{\rho}(\phi_*(I_\Delta))=\frac{(n-k)d_1(M)}{n}.
$
\end{enumerate}
\end{Corollary}
\begin{proof}
For \eqref{cor:resurgenceboundssparsepavinga}, the first two inequalities follow from \Cref{cor: matroid config resurgence}.  For the final inequality in \eqref{cor:resurgenceboundssparsepavinga}, we have $\mathrm{reg}(\phi_*(I_\Delta))=n\delta-(n-k)+1$ by \Cref{cor: matroid config invariants}\eqref{cor: matroid config invariantsa}.  Furthermore, 
$
\widehat{\alpha}(\phi_*(I_\Delta))=\delta \widehat{\alpha}(I_\Delta)=\delta \dfrac{n}{n-k},
$ 
where the first equality follows from \Cref{thm:MatroidConfigurations}\eqref{thm:MatroidConfigurationse} and the second follows from \Cref{thm:SR}\eqref{thm:SRe}.  Applying \Cref{ineq:rhoboundpoints} now yields the final inequality in \eqref{cor:resurgenceboundssparsepavinga}.

For \eqref{cor:resurgenceboundssparsepavingb}, we again have $
\widehat{\alpha}(\phi_*(I_\Delta))=\delta \dfrac{n}{n-k}$.  Moreover $\omega(\phi_*(I_\Delta))=\delta \omega(I_{\Delta})=\delta \alpha(I_{\Delta})$, since $M^*$ is a matroid design and so all generators of $I_{\Delta}$ have the same degree.  Thus, $$\frac{\alpha(\phi_*(I_\Delta))}{\widehat{\alpha}(\phi_*(I_\Delta))}=\frac{(n-k)d_1(M)}{n}=\frac{\omega(\phi_*(I_\Delta))}{\widehat{\alpha}(\phi_*(I_\Delta))},$$ and so $\widehat{\rho}(\phi_*(I_{\Delta}))=\dfrac{(n-k)d_1(M)}{n}$ by \Cref{res-bounds} and \Cref{ineq:rhohatboundpoints}.

For \eqref{cor:resurgenceboundssparsepavingc}, if $M^*$ is a perfect matroid design, then the generalized Hamming weights of $M^*$ form a \rsubad{} sequence by \Cref{thm:PMD-indegree}~\eqref{thm:PMD-indegreea}.  Thus, the generalized Hamming weights of $M$ are also \rsubad{}.  Since $M^*$ is a perfect matroid design, it is also a matroid design.  Now \eqref{cor:resurgenceboundssparsepavingc} follows from \eqref{cor:resurgenceboundssparsepavingb}.
\end{proof}

If $\mathcal{S}=(E,\mathfrak{B})$ is a Steiner system of type $S(t,k,n)$, then the matroid $M(\mathcal{S})$ is a sparse paving matroid, as we proved in \Cref{prop:steinerSystemsAreSparsePaving}.  Moreover, we observe in \Cref{ex:SteinerSystemPMD} that $M(\mathcal{S})$ is a matroid design if and only if $\mathcal{S}$ is a Steiner system of type $S(k-1,k,n)$.  Thus, \Cref{cor:resurgenceboundssparsepaving} recovers ~\cite[Corollary~4.8]{BFGM21}.

We have seen that projective geometries, affine geometries, the perfect matroid design coming from a Steiner system, and the matroids of constant weight codes are all perfect matroid designs (\Cref{ex:projectiveGeometries}, \Cref{ex:affineGeometries}, \Cref{ex:SteinerSystemPMD}, and \Cref{Ex:constant-weight}).  Thus, the resurgence of a matroid configuration of points coming from the dual of any of these is determined by \Cref{cor:resurgenceboundssparsepaving}.

\section{Concluding remarks and questions}\label{sec: conclusion}

In this section, we collect several questions that arose while writing this paper, which we hope will spark some interest for readers.   Let $M$ be a matroid of rank $k$ on a ground set of size $n$ and let $\{d_i(M)\}_{i=1}^{n-k}$ denote its sequence of generalized Hamming weights. First note that if  $\{d_i(M)\}_{i=1}^{n-k}$ is subadditive but not cosubadditive, then the formula for $\alpha(I^{(s)})$ for $s\gg 0$ as in \Cref{thm:SR}~\eqref{thm:SRe} does not hold. One might then wonder if the subadditivity alone might imply the cosubadditive property. This does not hold in general, as seen in the next example.

\begin{Example} \label{ex: paving not cosub}
    Let $\KK$ be a field, and let $\C$ be the linear code whose generator matrix $G$ and parity check matrix $H$ are
    \[
    G=\begin{bmatrix}
    1 & 0 & 0 &-1&0\\
    0 & 1 & 0 & -1 & 0\\
    0 & 0 & 1 & -1 & -1
    \end{bmatrix}
    \quad \mbox{and} \quad
    H=\begin{bmatrix}
    1 & 1 & 1 & 1 & 0\\
    0 & 0 &1 & 0 & 1 \\
    \end{bmatrix}.
        \]
       Then $\mathcal{C}$ is an $[5,3]$-linear code. Let $M=M(\mathcal{C})$ and $M^*=M(\C^{\perp})$. Observe  that the circuits of $M^*$ are $\{1,2\},\{1,4\},\{2,4\},\{1,3,5\},\{2,3,5\},\{3,4,5\}$.  Therefore, the Stanley-Reisner ideal  of $\Delta(M^*)$ is  $$I_{\Delta(M^*)}=\left\langle x_1x_2,x_2x_4,x_1x_4,x_1x_3x_5,x_2x_3x_5,x_3x_4x_5 \right\rangle.$$ 
    
    Notice that $M^*$ is a rank two matroid, and every circuit of $M^*$ has a size of at least two. Therefore, $M^*$ is a paving matroid and $d_1(\C)=2$.  Since the maximal number of columns of $G$ that form a one-dimensional subspace of $\KK^3$ is two, by \Cref{lem:genMindistMatroid}, we have $d_2(\C)=3$. Also, as every column of the generator matrix $G$ is a nonzero column, we have $d_3(\C)=5$.  By inspection (or \Cref{thm:Paving-indegree}~\eqref{thm:Paving-indegreea}) $\{d_i(\C)\}_{i=1}^{3}$ is a subadditive sequence.  However, it is not cosubadditive, as $d_1(\C)+d_3(\C)>d_2(\C)+d_2(\C)$. Alternatively, since $d_1(\C)=2<\frac{5-2+2}{2}$, the sequence of generalized Hamming weights is not cosubadditive, see \Cref{thm:Paving-indegree}~\eqref{thm:Paving-indegreeb}.
\end{Example}
 
In \Cref{thm:SR} we showed that when $\{d_i(M)\}_{i=1}^{n-k}$ is \rsubad{}, then $\frac{1}{\widehat{\alpha}(I_{\Delta})}+\frac{1}{\widehat{\alpha}(I_{\Delta(M^*)})}=1.$  In \Cref{sec:pavingMatroids} and \Cref{sec: gen-CT}, we saw that sparse paving matroids and matroids of various types of linear codes do have subadditive and cosubadditive sequences of generalized Hamming weights, and hence satisfy the above equality on the Waldschmidt constants of the matroid and its dual.  It is then natural to ask the following. 
\begin{Question}\label{ques:reciprocals}
For what classes of matroids $M$ is it true that
$
\dfrac{1}{\widehat{\alpha}(I_{\Delta(M)})}+\dfrac{1}{\widehat{\alpha}(I_{\Delta(M^*)})}=1?
$ Moreover, is it possible that a matroid $M$ satisfies the above equality while its sequence of generalized Hamming weights is not necessarily both subadditive and cosubadditive?
\end{Question}

In \Cref{sec: gen-CT} we also discussed the affine and projective Reed-Muller codes. For the respective first-order Reed-Muller codes, we showed that their sequences of generalized Hamming weights are \rsubad{}. For higher-order (affine or projective) Reed-Muller codes, it is not known whether this holds. Recall that as discussed in \Cref{rem:GHWAffineReedMuller} formulas for the generalized Hamming weights of the higher order affine Reed-Muller codes are known, whereas the situation is even less clear for the higher projective Reed-Muller codes, see \Cref{rem:GHWProjReedMuller}. 

\begin{Question}\label{que: GHWReedMuller}
Do the generalized Hamming weights of higher order affine or projective Reed-Muller codes form a subaddtive and/or cosubadditive sequence?
\end{Question}

In the context of matroid configurations (see \Cref{not: matroid config}), if the specialization of $I_{\Delta}$ is given by forms $f_1,\ldots,f_n$ all of the same degree $\delta$, then \Cref{thm:MatroidConfigurations}~\eqref{thm:MatroidConfigurationse} yields $\widehat{\alpha}(\phi_*(I_{\Delta}))=\delta \widehat{\alpha}(I_{\Delta})$.  If $f_1,\ldots,f_n$ do not all have the same degree, then we can still use \Cref{cor: matroid config invariants}~\eqref{cor: matroid config invariantsc} to compute $\widehat{\alpha}(\phi_*(I_{\Delta}))$.  However, it is not so clear when the minimization procedure of \Cref{cor: matroid config invariants}~\eqref{cor: matroid config invariantsc} is feasible to carry out explicitly.

\begin{Question}
Can we compute (explicitly) the Waldschmidt constant of $\phi_*(I_{\Delta(M)})$ when the polynomials $f_1,\ldots, f_n$ in \Cref{not: matroid config} are not of the same degree?  By \Cref{cor: matroid config invariants}, this amounts to weighting the elements of the ground set of $M$ by the degrees of $f_1,\ldots,f_n$ and computing the minimum of weighted circuit sums for the elongations $M^{(r)}$, $r \in [n-k]$, normalized by $r$.
\end{Question}

The lower bounds on the resurgence and asymptotic resurgence of matroid configurations in \Cref{sec:MatroidConfigurations} also hold (with appropriate modifications depending on the degree of the forms used to specialize) for the Stanley-Reisner ideal of the matroid (before specializing).  However, this is not at all the case for the upper bounds since these are only known to hold if the ideal defines a zero-dimensional scheme (for resurgence~\cite{BH10}) or smooth scheme (for asymptotic resurgence~\cite{GHV13}).  Thus, we ask the following.

\begin{Question}
Do any of the upper bounds for matroid configurations of points in \Cref{sec:MatroidConfigurations} also hold for the asymptotic resurgence or resurgence of the Stanley-Reisner ideal of the matroid before specializing?
\end{Question}

We close with the following open-ended question.

\begin{Question}
If we construct a matroid $M$ from one or more other matroids $M_1,\ldots,M_k$ via standard matroid operations (e.g., contraction, deletion, matroid sum, etc.), can we conclude properties of the generalized Hamming weights of $M$ (e.g., subadditivity/cosubadditivity) from properties of the generalized Hamming weights of $M_1,\ldots,M_k$?
\end{Question}

\bibliography{bibl}
\bibliographystyle{plain}

\appendix

\section{Proof of Theorem~\ref{thm:GeneratorsOfSymbolicReesAlgebraOfMatroid}}\label{app}

We will use the same notation that we have used throughout the paper and the following variant of the basis exchange property for matroids due to Brualdi.

\begin{Proposition}[Bijective basis exchange property~\cite{Brualdi69}]\label{prop:bijectivebasisexchange}
If $M$ is a matroid and $B_1,B_2\in\cB(M)$, then there is a bijection $f:B_1\to B_2$ so that $(B_1\setminus\{e\})\cup\{f(e)\}\in \cB(M)$ for all $e\in B_1$.
\end{Proposition}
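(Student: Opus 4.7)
\medskip

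\noindent\textbf{Proof proposal.} The plan is to reduce the statement to finding a perfect matching in an auxiliary bipartite graph, and then to verify Hall's condition using the basic exchange axiom and the fundamental circuit of an element with respect to a basis. Since all bases of $M$ have the same cardinality, $|B_1\setminus B_2|=|B_2\setminus B_1|$, and on the common part $B_1\cap B_2$ we will simply let $f$ act as the identity (noting $(B_1\setminus\{e\})\cup\{e\}=B_1\in\cB(M)$). So the whole content of the statement is to build a bijection between $A:=B_1\setminus B_2$ and $B:=B_2\setminus B_1$ that respects the exchange condition.

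I will define the bipartite graph $G$ on $A\cup B$ by placing an edge $\{e,b\}$ between $e\in A$ and $b\in B$ precisely when $(B_1\setminus\{e\})\cup\{b\}\in\cB(M)$. A bijection $f:B_1\to B_2$ with the desired property then amounts to a perfect matching in $G$. By Hall's marriage theorem, it suffices to check that for every $S\subseteq A$ the neighborhood $N_G(S)\subseteq B$ satisfies $|N_G(S)|\ge|S|$.

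To verify Hall's condition, fix $S\subseteq A$. The set $U:=B_1\setminus S$ is independent of size $\rk(M)-|S|$. Since $S\subseteq B_1\setminus B_2$ is disjoint from $B_2$, one checks $B_2\setminus U=B_2\setminus B_1=B$. Applying the augmentation axiom to the independent set $U$ and the basis $B_2$ produces a set $V\subseteq B$ with $|V|=|S|$ such that $U\cup V=(B_1\setminus S)\cup V\in\cB(M)$. The key claim is that $V\subseteq N_G(S)$; this will give $|N_G(S)|\ge|V|=|S|$ and complete the argument. For any $v\in V$, the set $(B_1\setminus S)\cup\{v\}$ is independent (as a subset of the basis $U\cup V$), while $B_1\cup\{v\}$ has a unique fundamental circuit $C$ with $v\in C\subseteq B_1\cup\{v\}$. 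If $C$ avoided $S$ then $C\setminus\{v\}\subseteq B_1\setminus S$, forcing $(B_1\setminus S)\cup\{v\}$ to contain the circuit $C$, which contradicts its independence. Hence there exists $s\in C\cap S$, and the standard property of fundamental circuits then yields $(B_1\setminus\{s\})\cup\{v\}\in\cB(M)$, i.e.\ $\{s,v\}$ is an edge of $G$, so $v\in N_G(S)$.

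The main obstacle that I anticipate is not a deep one but a bookkeeping subtlety: making sure the augmentation step really uses only elements of $B_2\setminus B_1$ (and not elements of $B_1\cap B_2$ already present in $U$), which is the reason I carefully isolated the identity $B_2\setminus U=B$ above. Once that is in place, the fundamental circuit argument connecting $v\in V$ to some $s\in S$ is precisely what allows Hall's condition to be verified, after which combining the resulting perfect matching with the identity on $B_1\cap B_2$ produces the required bijection $f:B_1\to B_2$.
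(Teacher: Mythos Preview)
Your argument is correct and is the classical proof: reduce to a perfect matching on $A=B_1\setminus B_2$ versus $B=B_2\setminus B_1$, and verify Hall's condition by augmenting $B_1\setminus S$ inside $B_2$ and then using the fundamental circuit of each added element to locate a partner in $S$. The bookkeeping you flag (that the augmenting elements lie in $B_2\setminus B_1$, not in $B_1\cap B_2$) is handled cleanly by your identity $B_2\setminus U=B$, and the fundamental-circuit step is exactly right: if $C\subseteq (B_1\setminus S)\cup\{v\}$ you would contradict independence of that set, so some $s\in C\cap S$ exists and $(B_1\cup\{v\})\setminus\{s\}$ is a basis.

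As for comparison, the paper does not give its own proof of this proposition; it simply cites Brualdi~\cite{Brualdi69} and then uses the result. Your Hall-matching argument is essentially Brualdi's original approach (he phrased it in the language of transversals, which amounts to the same thing). One small remark: the paper's subsequent Remark observes that any such bijection must fix $B_1\cap B_2$ pointwise, whereas you simply \emph{define} $f$ to be the identity there; both are fine for the intended application.
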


\begin{Remark}
Let $f:B_1\to B_2$ be the bijection in \Cref{prop:bijectivebasisexchange}.  We claim that if $e\in B_1\cap B_2$, then $f(e)=e$. To see this, first observe that if $e\in B_1\setminus (B_1\cap B_2)$ then $f(e)\notin B_1\cap B_2$, for if $f(e)\in B_1\cap B_2$ then $|(B_1\setminus\{e\})\cup \{f(e)\}|<|B_1|$, contradicting that $(B_1\setminus\{e\})\cup \{f(e)\}\in \cB(M)$.  Since $f$ is a bijection, it follows that $f(B_1\cap B_2)=B_1\cap B_2$.  Now suppose that $e\in B_1\cap B_2$ and $f(e)=e',$ where $e'\in B_1\cap B_2$ and $e'\neq e$.  Then again, $|(B_1\setminus\{e\})\cup\{e'\}|<|B_1|$ (since $e'\in B_1\setminus\{e\}$), contradicting that $(B_1\setminus\{e\})\cup \{f(e)\}\in \cB(M)$.  So $f(e)=e$.
\end{Remark}

\begin{Proposition}\label{prop:FactoringSymbolicMonomials}
Let $M=(E,\cB)$ be a matroid with $|E|=n$ and $\Delta=\Delta(M)$. 
Let $m=x^{\bf a}\in I^{(s)}_{\Delta}$ be a minimal generator with ${\bf a} =(a_1,\ldots,a_{n})\in \NN^{n}$ and let $A=\max\{a_i:i\in E\}$. Set $U=\{i\in E:a_i=A\}$ and let $t=\min\{|B\cap U|:B\in \cB(M^*)\}$.
We have the following:
\begin{enumerate} 
\item\label{i2:ineq1}  Let $B_1,B_2\in \cB(M^*)$ and $f:B_1\to B_2$ the bijection guaranteed by \Cref{prop:bijectivebasisexchange}.  If $\sum\limits_{i\in B_1} a_i=s$, then $a_i\le a_{f(i)}$ for all $i\in B_1$, and $|B_1 \cap U | \le |B_2 \cap U|$.
\item\label{i3:t} For any $B\in\cB(M^*)$, if $\sum\limits_{i\in B}a_i=s$, then $|B\cap U|=t$.
\item\label{i4:ineq2} If $B\in\cB(M^*)$, then $\sum\limits_{i\in B} a_i\ge s+|B\cap U|-t$ and $t\ge 1$.
\item\label{i5:factor} Factor $m$ as $m=x^Um'$.  Then $m'\in I^{(s-t)}_{\Delta}$ and $x^U\in I^{(t)}_{\Delta}$.

\item\label{i6:fullfactor} There exist  $n_1,\ldots,n_A$ positive integers with $\sum _{i=1}^A n_i=s$ and minimal generators $m_i\in I_{\Delta^{(n_i)}}$ such that $m=m_1\cdots m_A$.  
\end{enumerate}
\end{Proposition}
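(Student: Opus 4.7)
The plan is to prove the five parts in order, with (a) and (b) flowing directly from the bijective basis exchange property, (c) as the main computational core, (d) an immediate consequence of (c), and (e) an induction on $A$ built from (d). Throughout, the key translation of the hypothesis that $m$ is a minimal generator is the following observation: for every $j \in \Supp(m)$ there exists a basis $B_j \in \cB(M^*)$ with $j \in B_j$ and $\sum_{i \in B_j} a_i = s$, for otherwise $m/x_j$ would itself lie in $I^{(s)}_\Delta$.

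For (a), I will apply Proposition~\ref{prop:bijectivebasisexchange}: for each $i \in B_1$, the set $(B_1 \setminus \{i\}) \cup \{f(i)\}$ is a basis of $M^*$, whose exponent sum equals $s - a_i + a_{f(i)}$ and must be at least $s$. This yields $a_{f(i)} \ge a_i$, and the second statement follows because $i \in B_1 \cap U$ forces $a_{f(i)} = A$, hence $f(i) \in U$. Part (b) is then immediate: (a) says $|B_1 \cap U| \le |B_2 \cap U|$ for every $B_2$, so minimizing the right-hand side yields $|B_1 \cap U| \le t$, while the reverse inequality is the definition of $t$.

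For (c), I will pick any $B_0 \in \cB(M^*)$ with exponent sum $s$ and apply bijective exchange to obtain $f : B_0 \to B$. By (b), $|B_0 \cap U| = t$, and the argument of (a) shows $f$ restricts to an injection of $B_0 \cap U$ into $B \cap U$; the remaining $|B \cap U| - t$ elements of $B \cap U$ must be of the form $f(i)$ with $i \in B_0 \setminus U$, each contributing at least $A - (A-1) = 1$ to $a_{f(i)} - a_i$. Summing gives $\sum_{i \in B} a_i = s + \sum_{i \in B_0}(a_{f(i)} - a_i) \ge s + |B \cap U| - t$. To show $t \ge 1$, fix any $j \in U$; then $j \in B_j \cap U$, so $|B_j \cap U| \ge 1$, and (b) forces $t = |B_j \cap U| \ge 1$. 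Part (d) is then a direct verification: writing $m' = x^{\mathbf{a}'}$, for any basis $B$ of $M^*$ we have $\sum_{i \in B} a'_i = \sum_{i \in B} a_i - |B \cap U| \ge s - t$ by (c), and $x^U \in I^{(t)}_\Delta$ follows from the definition of $t$.

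Finally, for (e) I will induct on $A$. The base case $A = 1$ is automatic: then $m$ is squarefree, $U = \Supp(m)$, $t = s$, and $m$ itself serves as $m_1$. When $A \ge 2$, I will use (d) to factor $m = x^U \cdot m'$ and then verify two claims to complete the inductive step. First, $m'$ is itself a minimal generator of $I^{(s-t)}_\Delta$: otherwise $m'/x_j \in I^{(s-t)}_\Delta$ for some $j$, whence $m/x_j = x^U(m'/x_j) \in I^{(t)}_\Delta \cdot I^{(s-t)}_\Delta \subseteq I^{(s)}_\Delta$ would contradict the minimality of $m$. Second, $U$ is a circuit of $M^{(t)}$, so that $m_1 := x^U$ is a genuine minimal generator of $I_{\Delta^{(t)}}$; here an analysis in the spirit of (c), applied to $m/x_j$ for $j \in U$, shows that minimality of $m$ forces the existence of a basis $B$ of $M^*$ with $j \in B$ and $|B \cap U| = t$, which in turn forces $U \setminus \{j\}$ to be independent in $M^{(t)}$. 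Since the maximum exponent of $m'$ is $A - 1$, the induction hypothesis applied to $m'$ produces the remaining factors $m_2, \ldots, m_A$ with $\sum_{i=2}^A n_i = s - t$, and setting $n_1 = t$ completes the decomposition. I expect the most delicate bookkeeping to lie in the proof that $U$ is a circuit, rather than merely a dependent set, of $M^{(t)}$, since this is precisely what distinguishes a minimal generator of the squarefree ideal $I_{\Delta^{(t)}}$ from an arbitrary element.
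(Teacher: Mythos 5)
Your proposal is correct and follows the same overall strategy as the paper's proof, built on the bijective basis exchange property with the same factorization $m = x^U m'$. Where you diverge slightly is in how you exploit minimality of $m$. You distill a single observation up front — for each $j \in \Supp(m)$ there is a basis $B_j \in \cB(M^*)$ with $j \in B_j$ and $\sum_{i\in B_j} a_i = s$ — and reuse it twice: once to get $t \ge 1$ in (c), and again in (e) to show directly that $U$ is a \emph{circuit} (not merely a dependent set) of $M^{(t)}$, by observing that $|B_j \cap (U\setminus\{j\})| = t-1 < t$. The paper instead argues both points by contradiction: for $t \ge 1$, if $t = 0$ then $m' \in I^{(s)}_\Delta$ already, contradicting minimality of $m$; and for (e), it first shows $x^U$ is a minimal generator of $I^{(t)}_\Delta$ (if $x^U/x_k \in I^{(t)}_\Delta$ then $m/x_k \in I^{(s)}_\Delta$), and then bootstraps via the base-case argument to conclude $x^U$ is a minimal generator of $I_{\Delta^{(t)}}$. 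Both routes work; yours replaces two separate contradiction arguments with a single reusable lemma and makes the circuit structure of $U$ explicit, which is arguably more transparent, while the paper's version avoids stating that auxiliary lemma. One small gap worth filling in the writeup: in the base case $A=1$ you assert $t = s$ without justification (it follows because $x^U \in I^{(s)}_\Delta$ forces $|B\cap U| \ge s$ for every $B$, while minimality gives equality for some $B$), and you should still note explicitly, as the paper does, that a squarefree minimal generator of $I^{(s)}_\Delta$ is automatically a minimal generator of $I_{\Delta^{(s)}}$.
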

\begin{proof}
It follows from the description of symbolic powers in \Cref{eq:PDsymbolicpowersSR} that  $x^{\bf a} \in I_{\Delta}^{(s)}$ if and only if $ \sum\limits_{i \in B}a_i \ge s$ for all $B \in \cB(M^*)$. Also, $x^{\bf a} \in I_{\Delta}^{(s)}$ is a minimal monomial generator if and only if $ \sum\limits_{i \in B}a_i \ge s$ for all $B \in \cB(M^*)$ and there exists $B' \in \cB(M^*)$ such that $\sum\limits_{i \in B'}a_i = s$.

For~\eqref{i2:ineq1}, suppose $\sum\limits_{i\in B_1}a_i=s$ and $f:B_1\to B_2$ is the bijection guaranteed by \Cref{prop:bijectivebasisexchange}.  Let $i\in B_1$ and put $B'=(B_1\setminus\{i\})\cup\{f(i)\}$.  Then $s\le \sum\limits_{j\in B'}a_j=a_{f(i)}-a_i+\sum\limits_{j\in B_1}a_j=a_{f(i)}-a_i+s$.  It follows that $a_i\le a_{f(i)}$. 

Next, suppose  $a_i=A$.  Then, $A=a_i\le a_{f(i)}\le A$, so $a_{f(i)}=A$.  It follows that $f(B_1\cap U)\subseteq B_2\cap U$, so $|B_1\cap U|\le |B_2\cap U|$.

For~\eqref{i3:t}, suppose $B\in\cB(M^*)$ and $\sum\limits_{i\in B}a_i=s$.  Observe by the definition of $t$ that $|B\cap U|\ge t$.  Pick $B'\in \cB(M^*)$ so that $|B'\cap U|=t$ (the minimum intersection possible with $U$).  Let $f:B\to B'$ be the bijection guaranteed by \Cref{prop:bijectivebasisexchange}.  By~\eqref{i2:ineq1}, $|B\cap U|\le |B'\cap U|=t$.  So $|B\cap U|=t$.

For~\eqref{i4:ineq2}, let $B\in\cB(M^*)$ and pick $B'\in\cB(M^*)$ so that $\sum\limits_{i\in B'} a_i=s$.  By~\eqref{i3:t} we know that $|B'\cap U|=t$.   Now let $f:B'\to B$ be the bijection from \Cref{prop:bijectivebasisexchange}.  By~\eqref{i2:ineq1}, we know that, for any $i\in B'$, $a_i\le a_{f(i)}$, so in particular $f(B'\cap U)\subseteq  B\cap U$.  

Set $\ell:=|B\cap U|-|B'\cap U|=|B\cap U|-t$.  If $\ell=0$, since $x^{\bf a}\in I^{(s)}_{\Delta}$, we know that  $\sum\limits_{i\in B} a_i\ge s=s+\ell=s+|B\cap U|-t$ .  So suppose $\ell\ge 1$.  Let $(B \cap U)\setminus f(B'\cap U)=\{n_1,\ldots,n_\ell\}$.  Since $f$ is a bijection there are indices $m_1,\ldots,m_\ell\in B'\setminus U$ so that $f(m_i)=n_i$ for $1 \le i \le \ell$.  Then, $a_{m_i}+1\le a_{n_i}=A$, since $m_i\notin U$ and $n_i\in U$, for $i=1,\ldots,\ell$.  It follows from the previous line and~\eqref{i2:ineq1} that 
\[
\sum\limits_{j\in B}a_j= \sum\limits_{i\in B'}a_{f(i)}=\sum\limits_{i=1}^{\ell}a_{f(m_i)}+\sum\limits_{j\in B'\setminus \{m_1,\ldots,m_\ell\}} a_{f(j)} \ge \ell+\sum\limits_{j\in B'}a_j=|B\cap U|-t+s,
\]
proving the first part of \eqref{i4:ineq2}.

Now we show that  $t\ge 1$. 
 Suppose $t=0$. 
 Write $x^{\bf a} =x^{U}m'$ with $m'=x^{\bf a'}$. 
 Then, for any basis $B\in \cB(M^*)$, $\sum\limits_{j\in B} a_j'=\sum\limits_{j\in B}a_j-|B\cap U|\ge s-t=s$, by the first part of~\eqref{i4:ineq2}. Hence $m'\in I^{(s)}_{\Delta}$, contradicting the minimality of $m$.

For~\eqref{i5:factor}, it follows from the definition of $t$ that $|B \cap U | \ge t$ for all $B \in \cB(M^*)$.  Since we know from part \eqref{i4:ineq2} that $t\ge 1$ it follows that $U$ is a dependent set of $M^{(t)}$, see \Cref{lem:ElongationBasesAndCircuits}.  Therefore, by \Cref{lem:squarefreeMonomialsInSymbolicPowers},  $x^U\in I^{(t)}_{\Delta}$. 

We can write $m=x^{\bf a}\in I_{\Delta}^{(s)}$ as $m=x^Um'$, where 
\[
m'=\prod_{i\in E} x_i^{b_i} \quad\mbox{and} \quad b_i=
\begin{cases}
a_i & i\notin U\\
a_i-1 & i\in U.
\end{cases}
\]

We claim $m'\in I^{(s-t)}_{\Delta}$.  Let $B\in\cB(M^*)$. By the definition of $t$, we know $|B \cap U| \ge t$. Suppose $|B\cap U|=t+\ell$ for some $\ell\ge 0$.  By~\eqref{i4:ineq2}, $\sum\limits_{i\in B} a_i\ge s+\ell$, so $$\sum\limits_{i\in B}b_i=\sum\limits_{i\in B} a_i-|B\cap U|=\sum\limits_{i\in B}a_i-(t+\ell)\ge (s+\ell)-(t+\ell)=s-t$$  Since $B$ was arbitrary, $m'\in I^{(s-t)}_{\Delta}$.

For~\eqref{i6:fullfactor} we induct on $A\ge 1$.  If $A=1$, then $m$ is a squarefree minimal generator $I^{(s)}_{\Delta}$, thus $m\in I_{\Delta^{(s)}}$.  If it is not a minimal generator of $I_{\Delta^{(s)}}$ then $m/x_k\in I_{\Delta^{(s)}}$ for some $k\in E$ and so $m/x_k\in I^{(s)}_{\Delta}$, contradicting that $m$ is a minimal generator of $I^{(s)}_{\Delta}$.

Now suppose $A>1$.  By~\eqref{i5:factor} we can factor $m$ in the form $m=x^Um'$, where $x^U\in I_{\Delta}^{(t)}$,  $m'\in I^{(s-t)}_{\Delta}$ and $t \ge 1$.  Suppose $x^U\in I_{\Delta}^{(t)}$ is not a minimal generator.  Then there is some $k\in U$ so that $x^U/x_k\in I_{\Delta}^{(t)}$ and so $m/x_k=(x^U/x_k)m'\in I^{(s)}_{\Delta}$, contradicting that $m$ is a minimal generator of $I^{(s)}_{\Delta}$.  So $x^U$ is a minimal generator of $I^{(t)}_{\Delta}$ and, by the same argument as above, also a minimal generator of $I_{\Delta^{(t)}}$.  Similarly, $m'$ must be a minimal generator of $I^{(s-t)}_{\Delta}$.

Since $m'$ is a minimal generator of $I^{(s-t)}_{\Delta}$ and the largest exponent appearing in $m'$ is one less than the largest exponent appearing in $m$, by induction there exists non-negative integers $n_1,\ldots,n_{A-1}$ with $\sum\limits_{i=1}^{A-1} n_i =s-t$ such that $m'=m_1\cdots m_{A-1}$, where $m_i \in I_{\Delta^{(n_i)}}$ for $i=1,\ldots, A-1$.  Setting $x^U=m_A$ and $n_A=t$, we have the required factorization of $m$, namely $m=m'x^U=m_1\cdots m_A$.
\end{proof}

We now give the proof of \Cref{thm:GeneratorsOfSymbolicReesAlgebraOfMatroid}.  Recall that the \textit{corank} of a flat $F\in\cL(M)$ is $\crk_{M}(F):=\rk(M)-\rk_M(F)$.

\begin{customthm}{\Cref{thm:GeneratorsOfSymbolicReesAlgebraOfMatroid}}
Let $M$ and $\Delta$  be as in Setting and \Cref{set: Notation for section 7}.
Then, the symbolic Rees algebra $\cR_s(I_{\Delta})$ is generated as an $S$-algebra by the set of monomials 
\[
\{x^CT^i~:~C\in \circuits(M^{(i)}),1\le i\le n-k\}.
\]
Equivalently, $\cR_s(I_{\Delta})$ is generated as an $S$-algebra by 
\[
\{x^{E\setminus F}T^{\crk_{M^*}(F)}~:~F\in \cL(M^*) \setminus \{E\}\}.
\]
\end{customthm}
\begin{proof}
Let $m \in I_{\Delta}^{(s)}$ be a monomial and set $|E|=n$. There exists a minimal monomial generator $x^{\bf a} \in I_{\Delta}^{(s)}$ with ${\bf a} =(a_1,\ldots,a_{n}) \in \NN^{n}$ and a monomial $m'$ such that $m=m'x^{\bf a}$. Then, by \Cref{prop:FactoringSymbolicMonomials}, $x^{\bf a}=m_1\cdots m_A$, where $m_i$ is a minimal generator of $I_{\Delta^{(n_i)}}$ and $\sum\limits_{i=1}^{A} n_i=s$.  This yields the factorization
$
mT^s=m'\prod_{i=1}^A m_iT^{n_i}
$
in $\cR_s(I_{\Delta})$. 
Since $m_i$ is a minimal generator of $I_{\Delta^{(n_i)}}$, $m_i=x^{C_i}$ for some circuit $C_i\in M^{(n_i)}$ by \Cref{lem:squarefreeMonomialsInSymbolicPowers}.  Thus, $
mT^s=m'\prod_{i=1}^A x^{C_i}T^{n_i}.
$ which shows that the set $\{x^Ct^i~:~C\in \circuits(M^{(i)}), 1\le i\le n-k\}$ generates $\cR_s(I_{\Delta})$ as an $S$-algebra.
The final statement follows from \Cref{lem:ElongationBasesAndCircuits}.
\end{proof}

\end{document}